\documentclass{amsart}
\usepackage{va}
\usepackage{tikz-cd}
\usepackage{bm}
\usepackage{mathrsfs}

\DeclareFontFamily{OT1}{pzc}{}
\DeclareFontShape{OT1}{pzc}{m}{it}{<-> s * pzcmi7t}{}
\DeclareMathAlphabet{\mathpzc}{OT1}{pzc}{m}{it}

\definecolor{LinkBlue}{HTML}{1F4E79}
\definecolor{CiteWine}{HTML}{7A1E3A}
\definecolor{UrlTeal}{HTML}{006D77}

\usepackage[
  colorlinks=true,
  linkcolor=LinkBlue,
  citecolor=CiteWine,
  urlcolor=UrlTeal,
  bookmarksnumbered=true,
  bookmarksopen=true,
  unicode=true
]{hyperref}

\newcommand\CM{\operatorname{CM}}
\newcommand\ff{{\mathpzc{f}}}

\newcommand\var{\operatorname{var}}
\newcommand\dR{\mathbf R}

\newcommand\SP{\operatorname{SP}}
\newcommand\cSP{\mathcal S\mathcal P}
\newcommand\ua{{\bm{a}}}
\newcommand\ub{{\bm{b}}}
\newcommand\bh{\bm{h}}

\newcommand\Coh{\operatorname{Coh}}
\newcommand\Vect{\operatorname{Vec}}
\newcommand\perfect{{\operatorname{perf}}}
\newcommand\Perf{\operatorname{Perf}}
\newcommand\ch{\operatorname{ch}}
\newcommand\td{\operatorname{td}}
\newcommand\Td{\operatorname{Td}}

\newcommand\fancyK{\mathscr K}
\newcommand\stpull{\bm{f}^{\#}}
\newcommand\stpush{\bm{f}_{\#}}

\makeatletter
\def\l@subsection{\@tocline{2}{0pt}{2pc}{5pc}{}}
\makeatother

\title{Basic properties of kappa classes}
\date{July 21, 2026} 
\author{Valery Alexeev}
\email{valery@uga.edu}
\address{Department of Mathematics, University of Georgia, Athens, GA 30602, USA}
\thanks{Partially supported by NSF grant DMS-2501855}

\begin{document}

\begin{abstract}
  We define kappa classes on KSBA moduli stacks as classes in
  operational Chow cohomology. They generalize the
  Miller--Morita--Mumford classes on the moduli spaces of curves. We
  prove base-change compatibility,
  product and normalization formulas, and crepant functoriality,
  together with the vanishing of all kappa polynomials above the
  variation. The higher kappa classes are nonnegative on effective
  cycles, and the largest index of a numerically nontrivial kappa
  class equals the normalized variation, while the first kappa class
  detects the total variation.  As the boundary coefficients vary, the
  classes are chamberwise polynomial and compatible under operational
  wall crossing. Finally, every positive-degree kappa class is a
  rational multiple of a single Chern class of a natural virtual
  vector bundle.
\end{abstract} 

\maketitle
\tableofcontents

\section{Introduction}
\label{sec:intro}

KSBA spaces are natural generalizations of the moduli spaces
$\oM_{g,n}$ of stable curves to higher dimensions. A fundamental
result is that once the dimension~$n$, the rational coefficients
$\ua = (a_1, \dotsc, a_q) \in (0,1]^q$, and a volume $v\in \bQ_{>0}$
are fixed, there exists over $\bC$ a projective moduli space
$\SP(\ua,n,v)$ parameterizing KSBA-stable pairs
$(X, D = \sum_{s=1}^q a_s D_s)$ with $(K_X + D)^n = v$; see
\cite[Chapter~8]{kollar2023families-of-varieties}. The corresponding
moduli stack $\cSP(\ua,n,v)$ is a Deligne--Mumford stack of finite
type over $\bC$. For the rest of this paper, we fix one such KSBA
moduli stack and call it $\cM$, with $M$ denoting the coarse moduli
space. The exception is Section~\ref{sec:wall-crossing} on wall
crossing, where the coefficient $\ua$ and the volume $v(\ua)$ vary.

In \cite{alexeev2025kappa-classes} we defined kappa classes $\kappa_r$
on KSBA moduli spaces, generalizing the MMM (Miller--Morita--Mumford)
classes on the moduli spaces of curves to the moduli of
higher-dimensional stable pairs, and computed them in several
interesting cases. The main definition was given at the \emph{cycle}
level. However, it is easy to see that $\kappa_r$ can also be defined
as \emph{cocycles}, elements of the operational Chow cohomology
groups.
We spell out this definition in
Section~\ref{sec:definition}. Then the cycles $\kappa_r(S)$ of
\cite[Definition~2.4]{alexeev2025kappa-classes} become simply
$\kappa_r\cap [S]$.
Unlike the moduli of curves, the KSBA moduli stacks are rarely smooth,
so the distinction between cycles and cocycles becomes very important.

In Section~\ref{sec:properties} we describe functoriality,
normalization, vanishing, nonnegativity, and wall-crossing properties
of $\kappa_r$. Two different variation invariants are detected by the
classes: $\kappa_1$ detects the total variation, while the largest
index of a numerically nontrivial $\kappa_r$ is the normalized
variation. Equivalently, the total variation is measured by the
nilpotence length of the ideal generated by the positive-degree kappa
classes. In Section~\ref{sec:GRR} we express each positive-degree
kappa class as a rational multiple of a single Chern class of a
natural virtual vector bundle on $\cM$.

All schemes and stacks are assumed to be of finite type over the field
$k=\bC$. We work over $\bC$, as the moduli, positivity, and resolution
results used below are invoked in that setting. A variety is an integral
separated scheme. Throughout, every index $r$ occurring in a kappa class
or in a related formula is a nonnegative integer.

\begin{acknowledgements}
  During the development of this paper, the author used versions 5.5
  and 5.6 of OpenAI’s ChatGPT as an
  interactive research aid for brainstorming possible proof
  strategies, criticizing preliminary arguments, and identifying
  potentially relevant literature. All references suggested by the
  tool were independently located, read, and cited from their original
  sources. All mathematical arguments were independently reconstructed
  and verified by the author, who wrote the final exposition and
  assumes full responsibility for the correctness, originality,
  attribution, and content of the paper.
\end{acknowledgements}

\section{Definition of kappa classes}
\label{sec:definition}

\subsection{The KSBA moduli stack}
\label{sec:ksba-stack}

The KSBA coarse moduli space $M$ is constructed as a quotient $H/G$,
where $H$ is a $G$-invariant locally closed subscheme of a product of
Hilbert schemes and $G$ is a reductive group, acting properly with
finite stabilizers. The moduli stack $\cM=[H/G]$ is a proper separated
Deligne--Mumford (DM) stack of finite type over $\bC$, with projective
coarse moduli space $M$. It carries a representable, flat, projective
universal family
\begin{displaymath}
  \ff\colon(\cX,\cD)\longrightarrow\cM
\end{displaymath}
of pure relative dimension $n$ and with reduced fibers; see
\cite[Section~8]{kollar2023families-of-varieties}.

The stack $\cX$ has a fundamental relatively ample $\bQ$-line
bundle $\cO_\cX(K_{\cX/\cM} + \cD)$ constructed as follows. For any family
$f\colon (X,D)\to S$ of KSBA-stable pairs, let $i\colon U\to X$
denote the inclusion of the open subset whose restriction to every fiber
is the set of points that are smooth or have at worst double-crossing
singularities. This
set is compatible with arbitrary base changes: for $g\colon S'\to S$
and $X' = X\times_S S' \xrightarrow{g'} X$, one has $U' =
g'{}\inv(U)$.

For the fixed numerical data $(\ua,n,v)$, the boundedness theorem used
in the construction of the KSBA stack gives a single sufficiently
divisible positive integer $N$, which we take to be divisible by the
denominators of the coefficients $a_1,\dotsc,a_q$. The Koll\'ar
condition in the
definition of a stable family gives compatibility of the corresponding
reflexive log-pluricanonical powers with arbitrary base change. More
precisely, for every family $f\colon(X,D)\to S$ represented by $\cM$, the
sheaf
\begin{displaymath}
  L_{X/S}:=i_*\omega_{U/S}^{\otimes N}\bigl(ND|_U\bigr)
\end{displaymath}
is an $f$-ample line bundle, and for every base change $g\colon S'\to S$,
the canonical morphism
$g'{}^*L_{X/S}\rightarrow L_{X'/S'}$ is an isomorphism. See \cite[Section~8]{kollar2023families-of-varieties},
in particular the parts of the construction of the stable-pair moduli
functor concerning boundedness and the Koll\'ar condition. Consequently, these line
bundles assemble to an $\ff$-ample line bundle $\cL\in\Pic(\cX)$, and we
formally define a $\bQ$-line bundle
\begin{displaymath}
  \cO_\cX(K_{\cX/\cM}+\cD) := \frac1N[\cL]\in\Pic(\cX)_\bQ.
\end{displaymath}

This definition is independent of the choices of the integer $N$ and the
line bundle $\cL$: if $N' = uN$ and $\cL' \simeq \cL^{\otimes u}$, then
$(1/N)[\cL] = (1/N')[\cL']$. For any two admissible choices, the same
equality follows after passing to a common divisible multiple.

\subsection{Chow groups}
\label{sec:chow-groups}

For Chow groups of schemes, the standard reference is Fulton
\cite{fulton1998intersection-theory}. Vistoli
\cite{vistoli1989intersection-theory} defined the Chow groups
\emph{with rational coefficients} of DM stacks. With very few
exceptions, the kappa classes are rational rather than integral. Thus
the results of \cite{vistoli1989intersection-theory} suffice for
us. Alternatively, since $\cM$ is a global quotient stack, the results
of Edidin--Graham \cite{edidin1998equivariant-intersection}, also with
rational coefficients, apply. The two approaches
\cite{vistoli1989intersection-theory,
  edidin1998equivariant-intersection} agree where both apply. The more
sophisticated results of Kresch \cite{kresch1999cycle-groups} about
integral Chow groups of Artin stacks go beyond what is needed for our
purposes.

\subsection{Operational Chow cohomology}
\label{sec:chow-schemes}

For an arbitrary, possibly singular scheme of finite type over a field
$k$, the operational Chow groups $A^*$ were defined in
\cite[Chapter~17]{fulton1998intersection-theory}, following
\cite{fulton1981categorical-framework}. For any morphism
$f\colon X\to Y$, a bivariant class $c\in A^p(X\xrightarrow{f} Y)$ is
a rule that associates to each integer $k$ and each morphism
$g\colon Y'\to Y$ with the fiber product
\begin{equation}\label{eq:square}
  \begin{tikzcd}
    X' = X\times_Y Y'
    \arrow[r, "g'{}"]
    \arrow[d, "f'"']
    &
    X
    \arrow[d, "f"]
    \\
    Y'
    \arrow[r, "g"']
    &
    Y
  \end{tikzcd}
\end{equation}
a homomorphism of Chow groups
\begin{displaymath}
  c_g\colon A_k(Y')\to A_{k-p}(X'),
  \qquad
  \alpha \mapsto c \cap \alpha := c_g(\alpha),
\end{displaymath}
compatible with push-forward, pull-back, and generalized Gysin
homomorphisms in the argument $Y'$. For any morphisms $f\colon X\to Y$
and $g\colon Y\to Z$, there is a product
\begin{displaymath}
  A^p(X\xrightarrow{f} Y) \otimes A^q(Y\xrightarrow{g} Z) \to
  A^{p+q}(X\xrightarrow{gf} Z)
\end{displaymath}
making $A^*(X):= \oplus_p A^p(X\xrightarrow{{\rm id}_X} X)$ into an
associative ring with identity.
The subring generated by Chern classes of vector bundles lies in the
center of $A^*(X)$.

Vistoli \cite[Section~5]{vistoli1989intersection-theory} extended
these definitions, constructions, and results to the bivariant Chow
groups of DM stacks, again with rational coefficients.

\subsection{Standard constructions}
\label{sec:standard}

In the case of schemes, see \cite[Chapter~3 and
\S17.3]{fulton1998intersection-theory} for (a). For the bivariant Chow
formalism, including flat orientations, products, and proper
pushforward of bivariant classes used in (b) and (c), see
\cite[\S\S17.1--17.3]{fulton1998intersection-theory}. The extension to
DM stacks was given in
\cite[Section~5]{vistoli1989intersection-theory}.

\smallskip (a) For any vector bundle $V$ on $X$ and any integer
$m\ge0$, there is a Chern class $c_m(V) \in A^m(X)$.  In particular, a
line bundle $L$ on $X$ defines a class $c_1(L)\in A^1(X)$.

\smallskip (b) A flat morphism $f\colon X\to Y$ of relative dimension
$n$ defines an \emph{orientation} $[f] \in A^{-n}(X\to Y)$ by flat
pullback: for every $\alpha\in A_k(Y')$, the rule is
\begin{displaymath}
  [f] \cap \alpha = f'{}^*\alpha \in A_{k+n}(X').
\end{displaymath}

\smallskip (c) Any flat and proper morphism $f\colon X\to Y$ defines
a generalized Gysin pushforward homomorphism
\begin{equation}\label{eq:f-c-f}
  f_!\colon A^p(X) \to A^{p-n}(Y), \qquad
  f_!(c)=f_*(c\cdot [f]).
\end{equation}
Explicitly, for any $Y'\to Y$ as in diagram~\eqref{eq:square}
and $\alpha\in A_k(Y')$, the rule for $f_!c$ is
\begin{displaymath}
  f_!c\cap \alpha =
  f'_*(c\cap f'{}^*\alpha).
\end{displaymath}
Here $\alpha\in A_k(Y')$; $f'{}^*\alpha \in A_{k+n}(X')$ is the flat
pullback defined by the orientation $[f]$ as in (b);
$c\cap f'{}^*\alpha \in A_{k+n-p}(X')$; and
$f'_*(c\cap f'{}^*\alpha) \in A_{k+n-p}(Y')$ is the proper pushforward
in homology.

\begin{remark}
  Fulton \cite{fulton1998intersection-theory} denotes $f_!c$ by
  $f_*c$. This notation may easily be confused with the proper
  pushforward in homology (for example, formula \eqref{eq:f-c-f}
  becomes quite confusing).  On the other hand, denoting the Gysin
  pushforward by $f_!$ is very common in the literature. For these
  reasons, we have chosen the notation~$f_!$.
\end{remark}

\smallskip (d) Let $f\colon X\to Y$ be a flat and proper morphism, and
let $L$ be a line bundle on $X$. Constructions (a), (b), and (c)
combine to define an operational Chow cohomology class
\begin{displaymath}
  f_!\bigl( c_1(L)^{r+n} \bigr) \in A^r(Y).
\end{displaymath}

\subsection{The definition}
\label{sec:the-definition}

\begin{definition}\label{def:kappa}
  Let $\ff\colon (\cX,\cD)\to \cM$ be the universal family over a KSBA
  stack and $\cL$ be the line bundle as in
  Section~\ref{sec:ksba-stack}. Set $\ell=c_1(\cL) \in A^1(\cX)$,
  so that $c_1\bigl(\cO_\cX(K_{\cX/\cM} + \cD)\bigr) = \frac1{N} \ell \in
  A^1(\cX)_\bQ$. 
  We define
  \begin{displaymath}
    \kappa_r = \ff_! c_1\bigl( \cO_\cX(K_{\cX/\cM} + \cD) \bigr)^{r+n} =
    \frac1{N^{r+n}}\, \ff_! \ell^{r+n} 
    \in A^r(\cM)_\bQ,
  \end{displaymath}
  using construction (d) from the previous section.
\end{definition}

\begin{example}[Pointed stable curves]
  On the moduli stack of $q$-pointed stable curves, with universal
  family
  \begin{displaymath}
    \pi\colon
    (\cC,\sigma_1+\dotsb+\sigma_q)
      \longrightarrow \overline{\cM}_{g,q},
  \end{displaymath}
  the present definition gives
  \begin{displaymath}
    \kappa_r
      =\pi_!
      \Bigl(
        c_1\bigl(
          \omega_\pi(\sigma_1+\dotsb+\sigma_q)
        \bigr)^{r+1}
      \Bigr),
  \end{displaymath}
  the standard logarithmic kappa class. In particular,
  \begin{displaymath}
    \kappa_0=2g-2+q.
  \end{displaymath}
  Thus, our convention is the logarithmic one, using the relative
  dualizing sheaf twisted by the marked sections. These kappa classes
  were introduced and studied by Mumford in
  \cite{mumford1983towards-enumerative} and have since been studied by
  many other authors.
\end{example}

\section{Basic properties}
\label{sec:properties}

\subsection{\texorpdfstring{$\kappa_0$ and $\kappa_1$}{kappa 0 and kappa 1}}
\label{sec:kappa01}

\begin{proposition}
  One has
  \begin{displaymath}
    \kappa_0=v\cdot 1\in A^0(\cM)_\bQ,
  \end{displaymath}
  where $v$ is the volume on the chosen component
  $\cM=\cSP(\ua,n,v)$.
\end{proposition}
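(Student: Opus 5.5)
Write $\lambda=\frac1N\ell=c_1(\cO_\cX(K_{\cX/\cM}+\cD))\in A^1(\cX)_\bQ$, so that $\kappa_0=\ff_!\lambda^n\in A^0(\cM)_\bQ$. The plan is to reduce the statement to the following fact: for every morphism $g\colon Y'\to\cM$ from a scheme (or DM stack) and every $\alpha\in A_k(Y')$, we have $\kappa_0\cap\alpha=v\,\alpha$. By the explicit rule in construction (c), this amounts to
\begin{displaymath}
  f'_*\bigl(\lambda'^n\cap f'{}^*\alpha\bigr)=v\,\alpha ,
\end{displaymath}
where $f'\colon X'\to Y'$ is the pulled-back family and $\lambda'$ is the pullback of $\lambda$. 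The identity is linear in $\alpha$ and every operation in it commutes with proper pushforward. Moreover, the construction of $\cL$ in Section~\ref{sec:ksba-stack} is compatible with base change. So it suffices to treat $\alpha=[V]$ with $V\subset Y'$ a closed integral substack. We then replace $Y'$ by $V$, using the base-change compatibility of $L_{X/S}$, and reduce to the case where $Y'=V$ is integral of dimension $k$ and $\alpha=[V]$.

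In this situation $\lambda^n\cap[X_V]$ lies in $A_k(X_V)$, and $f_{V*}$ of it lies in $A_k(V)=\bQ\cdot[V]$. So $f_{V*}(\lambda^n\cap[X_V])=d\,[V]$ for some $d\in\bQ$, and the task is to show $d=v$. To compute $d$ we restrict to a general point. Take the generic point $\eta$ of $V$, or, to stay inside Fulton's framework, a dense open $V^\circ\subset V$ together with the flat pullback to it. Since $\ff$ is flat, $X_V\to V$ is flat, and flat pullback to $V^\circ$ commutes with both $c_1$ and proper pushforward. The degree $d$ can therefore be computed over any point $p\in V^\circ$ by a refined Gysin map, or equivalently over the generic fiber. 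Both give
\begin{displaymath}
  d=\deg\bigl(\lambda|_{X_p}^n\cap[X_p]\bigr)=\tfrac1{N^n}\deg\bigl(c_1(L_{X_p})^n\bigr).
\end{displaymath}
This step uses the compatibility of $\lambda$ with Gysin pullback (bivariance) together with $X_p=X_V\times_V p$, and invokes the property of the class that it commutes with refined Gysin maps.

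Finally, for the fiber $(X_p,D_p)$, a KSBA-stable pair, we have $L_{X_p}=\cO_{X_p}(N(K_{X_p}+D_p))$ by base change. Hence $\frac1{N^n}(c_1(L_{X_p}))^n=(K_{X_p}+D_p)^n=v$. One point needs care: $[X_p]$ is the fundamental cycle of a reduced, pure $n$-dimensional fiber, so the intersection number agrees with the volume computed on the (possibly reducible) variety $X_p$. This holds because the fibers are reduced by Section~\ref{sec:ksba-stack}.

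The main obstacle is the middle step, namely justifying that the coefficient $d$ is computed fiberwise. This requires the Gysin compatibility on the stack $V$, including the case where $V$ has stabilizers. If the Gysin route is awkward, I would first pass to an étale atlas or to a finite cover $V'\to V$ by a scheme, as in Vistoli, using that degrees scale by $\deg(V'/V)$ under pullback–pushforward. After that the computation reduces to Fulton's Example~10.2 (conservation of number for flat proper families over an integral base).
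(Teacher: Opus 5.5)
Your proposal is correct and follows essentially the same route as the paper. By bivariance and the base-change compatibility of $\cL$, you reduce to an integral base $Z$ with the cycle $[Z]$, and then identify the coefficient of $(f_Z)_*\bigl(c_1(K_{X_Z/Z}+D_Z)^n\cap[X_Z]\bigr)\in A_{\dim Z}(Z)_\bQ=\bQ\,[Z]$ with the volume $v$ of the reduced generic fiber.

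Your fiberwise computation of that coefficient simply expands the paper's one-line step. For the refined Gysin variant, take $p$ in the smooth locus of $V$, or use the generic fiber directly. Your stacky concern disappears if, as the paper does, you test only on schemes $T\to\cM$.
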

\begin{proof}
  It is enough to test the pullback of the operational class on every
  integral cycle $[Z]$ on every scheme $T$ equipped with a morphism
  $T\to\cM$. After replacing $T$ by $Z$, let $f_Z\colon(X_Z,D_Z)\to Z$
  be the pulled-back family. Since the generic fiber has volume $v$,
  proper pushforward gives
  \begin{displaymath}
    (f_Z)_*\bigl(c_1(K_{X_Z/Z}+D_Z)^n\cap[X_Z]\bigr)=v[Z].
  \end{displaymath}
  Hence $\kappa_{0,T}$ acts on every integral cycle as multiplication
  by $v$, which proves the operational identity.
\end{proof}

Under the normalization used in \cite{wang2014nonexistence-asymptotic},
$\kappa_1$ is the first Chern class
\begin{displaymath}
  \kappa_1=c_1(\lambda_{\CM}^{\rm log})
\end{displaymath}
of the logarithmic CM (Chow--Mumford) rational line bundle. The
descent of $\lambda_{\CM}^{\rm log}$ to the coarse KSBA moduli space
is ample by
\cite[Theorem~1.2 and the proof of Theorem~1.1]
{patakfalvi2017ampleness-cm}.

\subsection{Functoriality and projection formula}

\begin{proposition}[Functoriality]\label{prop:functoriality}
Let $g\colon S'\to S$ be a morphism and denote by
$f'\colon (X',D')\to S'$ the base-changed family.  Then
$g^*\kappa_{r,S}=\kappa_{r,S'}$.
If $h\colon T\to S$ is proper and $\alpha\in A_*(T)_{\bQ}$, then
$h_*\bigl(\kappa_{r,T}\cap\alpha\bigr) = \kappa_{r,S}\cap h_*\alpha$.
\end{proposition}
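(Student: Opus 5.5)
The plan is to reduce both statements to the definition $\kappa_{r,S}=\frac1{N^{r+n}}f_!\,\ell_S^{r+n}$ with $f_!c=f_*(c\cdot[f])$, and to the base-change compatibility of $\cL$ recorded in Section~\ref{sec:ksba-stack}. Here $\kappa_{r,S}$ is defined for any family $f\colon(X,D)\to S$ in the same way as in Definition~\ref{def:kappa}, using $L_{X/S}$. Equivalently, $\kappa_{r,S}$ is the pullback of $\kappa_r$ along the classifying map $S\to\cM$, so the first claim is really the consistency of these two descriptions.

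\textbf{Base change.} Form the cartesian square with $g'\colon X'\to X$ and $f'\colon X'\to S'$. The key input is the isomorphism $g'{}^*L_{X/S}\simeq L_{X'/S'}$, which gives $g'{}^*\ell_S=\ell_{S'}$ in $A^1(X')$. Since Chern classes commute with pullback, this yields $g'{}^*(\ell_S^{r+n})=\ell_{S'}^{r+n}$.

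Next, check $g^*f_!c=f'_!\,g'{}^*c$ for any $c\in A^*(X)$ directly from the operational rule. For $Y\to S'$ and $\alpha\in A_k(Y)$, both sides cap $\alpha$ to the same expression: flat-pull $\alpha$ back to $X\times_S Y=X'\times_{S'}Y$, cap with $c$ (equivalently with $g'{}^*c$, by the definition of pullback of bivariant classes), and push forward to $Y$. Indeed, $g^*$ of a bivariant class on $S$ is just its restriction to test objects over $S'$. Here one uses only that $[f]$ pulls back to $[f']$ and that $f'$ is again flat and proper of relative dimension $n$.

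Combining the two facts gives
\[
g^*\kappa_{r,S}=N^{-r-n}\,f'_!\,\ell_{S'}^{r+n}=\kappa_{r,S'}.
\]
For DM stacks one works in Vistoli's bivariant theory, where the same formal arguments hold.

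\textbf{Projection formula.} By the first part, $\kappa_{r,T}=h^*\kappa_{r,S}$. So the claim is the standard compatibility of a bivariant class $c\in A^r(S)$ with proper pushforward: $h_*(h^*c\cap\alpha)=c\cap h_*\alpha$. This is one of Fulton's axioms for bivariant classes (commutation with proper pushforward, axiom (C1) in \cite[\S17.1]{fulton1998intersection-theory}), extended to stacks by Vistoli. Concretely, for $\alpha\in A_*(T)$ one has $h^*c\cap\alpha=c_h(\alpha)$, and compatibility of $c$ with pushforward along the proper map $h$ over $S$ is precisely the identity.

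\textbf{Main obstacle.} The only non-formal point is the identification $g'{}^*\ell_S=\ell_{S'}$, i.e.\ that $\cL$ is compatible with arbitrary base change. This is supplied by the Koll\'ar condition cited in Section~\ref{sec:ksba-stack}. A minor additional care is that the integer $N$ is the same for $S$ and $S'$, which holds since $N$ is fixed by the data $(\ua,n,v)$; if it were not, one would pass to a common multiple as explained after the definition of $\cO_\cX(K_{\cX/\cM}+\cD)$.
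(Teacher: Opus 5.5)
Your proposal is correct and takes the same route as the paper. The paper's proof just cites base-change compatibility of the operational Gysin pushforward for the first claim, and the bivariant projection formula (compatibility with proper pushforward) for the second. You spell out the same two facts in more detail, including the role of $g'{}^*L_{X/S}\simeq L_{X'/S'}$ and the reduction $\kappa_{r,T}=h^*\kappa_{r,S}$.
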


\begin{proof}
The first equality is the base-change compatibility of the operational Gysin
construction.  The second is the projection formula for operational Chow
classes.
\end{proof}

\begin{corollary}[Generically finite and birational base change]
  \label{cor:base-change-degree}
  Suppose that
  \begin{displaymath}
    h\colon T\longrightarrow S
  \end{displaymath}
  is proper, dominant, and generically finite of degree~$e$.  Assume
  that $T$ and $S$ are integral schemes of the same
  dimension~$d$. Then, for every $r\geq0$,
  \begin{displaymath}
    h_*\bigl(\kappa_{r,T}\cap[T]\bigr)
    =e\,\kappa_{r,S}\cap[S]
    \qquad\text{in }A_{d-r}(S)_\bQ.
  \end{displaymath}
  If $S$ is proper and $r=d$, then
  \begin{displaymath}
    \deg\kappa_{d,T}=e\,\deg\kappa_{d,S}.
  \end{displaymath}
  In particular, top kappa degrees are invariant under proper birational
  modifications of a proper base.
\end{corollary}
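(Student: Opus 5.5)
The argument has three steps: apply the projection formula of Proposition~\ref{prop:functoriality}, compute $h_*[T]$, and then take degrees. Throughout, $\kappa_{r,T}$ and $\kappa_{r,S}$ are the kappa classes of the families over $T$ and $S$. By the first part of Proposition~\ref{prop:functoriality}, the family over $T$ is the pullback of the family over $S$ along $h$, so $\kappa_{r,T}=h^*\kappa_{r,S}$. I would state this explicitly, since it is implicit in the hypotheses.

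\emph{Step 1.} Apply the second part of Proposition~\ref{prop:functoriality} to the proper morphism $h$ and the class $\alpha=[T]\in A_d(T)_\bQ$. This gives
\begin{displaymath}
  h_*\bigl(\kappa_{r,T}\cap[T]\bigr)=\kappa_{r,S}\cap h_*[T].
\end{displaymath}

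\emph{Step 2.} Compute $h_*[T]$ from the definition of proper pushforward of cycles. Since $h$ is dominant, proper, and $T$, $S$ are integral of the same dimension $d$, the image $h(T)$ is closed and dense, so it equals $S$. The field extension $k(T)/k(S)$ is finite, and its degree is $e$ by the definition of ``generically finite of degree $e$''. Hence $h_*[T]=e[S]$ in $A_d(S)$. Substituting into Step 1 and using $\bQ$-linearity of the cap product gives the first formula, which lives in $A_{d-r}(S)_\bQ$ because $\kappa_{r,S}\in A^r$.

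\emph{Step 3.} Suppose $S$ is proper and $r=d$. Then $T$ is proper as well, since $h$ is proper. Write $\deg$ for the degree of a $0$-cycle class, that is, pushforward to a point. Degree is compatible with proper pushforward, because $T\to\mathrm{pt}$ factors as $T\to S\to\mathrm{pt}$. Here $\deg\kappa_{d,T}$ means $\deg(\kappa_{d,T}\cap[T])$. Taking degrees in the identity of Step 2 gives $\deg\kappa_{d,T}=e\,\deg\kappa_{d,S}$.

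For the final sentence, a proper birational modification has $e=1$. The integrality and equal-dimension hypotheses hold automatically for such a modification.

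No step is a real obstacle. The only point needing care is Step 2: confirming that $h$ is surjective and that the degree $e$ in the hypothesis is the field-extension degree $[k(T):k(S)]$ used in Fulton's definition of $h_*$.
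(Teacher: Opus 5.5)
Your proposal is correct and follows the paper's proof: functoriality gives $\kappa_{r,T}=h^*\kappa_{r,S}$, the projection formula gives $h_*\bigl(\kappa_{r,T}\cap[T]\bigr)=\kappa_{r,S}\cap h_*[T]$, and $h_*[T]=e[S]$, after which taking degrees gives the numerical statement. Your added checks that $h$ is surjective and that $e=[k(T):k(S)]$ are exactly the details the paper leaves implicit.
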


\begin{proof}
  Functoriality and the projection formula give
  \begin{displaymath}
    h_*\bigl(\kappa_{r,T}\cap[T]\bigr)
      =h_*\bigl(h^*\kappa_{r,S}\cap[T]\bigr)
      =\kappa_{r,S}\cap h_*[T].
  \end{displaymath}
  Since $h_*[T]=e[S]$, this proves the cycle identity. Taking degrees
  when $S$ is proper and $r=d$ gives the numerical statement.
\end{proof}

\subsection{Products}

\begin{proposition}[Product formula]\label{prop:product-formula}
  Let $f_k\colon(X_k,D_k)\to S$, $k=1,2$, be KSBA-stable families
  of relative dimensions $n_k$, and put
  \begin{displaymath}
    (X,D):=
    \bigl(X_1\times_S X_2,\,p_1^*D_1+p_2^*D_2\bigr)
    \longrightarrow S.
  \end{displaymath}
  Then
  \begin{displaymath}
    \kappa_{r,S}(X,D)
    =
    \sum_{j=0}^r
    \binom{n_1+n_2+r}{n_1+j}
    \kappa_{j,S}(X_1,D_1)\,
    \kappa_{r-j,S}(X_2,D_2).
  \end{displaymath}
  For $r=0$, if $v_k$ is the volume of the $k$-th family, then
  $v_{X,D}=\binom{n_1+n_2}{n_1}v_1v_2$.
\end{proposition}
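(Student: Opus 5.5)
The plan is a direct bivariant computation on the fiber product. Write $f\colon X=X_1\times_S X_2\to S$ and $p_k\colon X\to X_k$ for the projections, which are flat and proper as base changes of $f_k$. The first step is to identify the polarization. Choose $N$ divisible enough for both families and let $L_k$ denote the line bundles $L_{X_k/S}$. We check that $L_{X/S}\simeq p_1^*L_1\otimes p_2^*L_2$. Over the open set $U_1\times_S U_2$, relative dualizing sheaves satisfy $\omega_{X/S}=p_1^*\omega_{X_1/S}\otimes p_2^*\omega_{X_2/S}$. The complement of this open set has codimension $\ge2$ in every fiber, and both sides are reflexive, so the isomorphism extends over all of $X$. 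Hence, with $\ell_k=c_1(L_k)/N$,
\begin{displaymath}
  c_1\bigl(\cO_X(K_{X/S}+D)\bigr)=p_1^*\ell_1+p_2^*\ell_2.
\end{displaymath}
We also note that $(X,D)\to S$ is itself KSBA-stable: products of slc pairs are slc, and ampleness follows from the formula above. The kappa classes of the product are therefore defined.

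The second step is to expand with the binomial theorem. Since $p_1^*\ell_1$ and $p_2^*\ell_2$ are Chern classes, they commute, and
\begin{displaymath}
  (p_1^*\ell_1+p_2^*\ell_2)^{n_1+n_2+r}=\sum_{m}\binom{n_1+n_2+r}{m}\,p_1^*\ell_1^{m}\,p_2^*\ell_2^{n_1+n_2+r-m}.
\end{displaymath}
We then need a Künneth-type pushforward identity in bivariant Chow:
\begin{displaymath}
  f_!\bigl(p_1^*a\cdot p_2^*b\bigr)=f_{1!}(a)\cdot f_{2!}(b)\qquad\text{for }a\in A^*(X_1),\ b\in A^*(X_2).
\end{displaymath}
To prove it, factor $f$ as $f_1\circ p_1$ and compute $f_!=f_{1!}\circ p_{1!}$, which holds by functoriality of orientations and proper pushforward in Fulton's bivariant theory. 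Then apply the projection formula $p_{1!}(p_1^*a\cdot p_2^*b)=a\cdot p_{1!}(p_2^*b)$. Finally use base change $p_{1!}p_2^*b=f_1^*f_{2!}b$, which is the compatibility of the Gysin pushforward with the fiber square defining $X$; this is the same fact used in Proposition~\ref{prop:functoriality}. A last application of the projection formula for $f_1$ gives $f_{1!}(a)\cdot f_{2!}(b)$.

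The third step is to read off the degrees. The factor $f_{k!}\ell_k^{m_k}$ lies in $A^{m_k-n_k}(S)$, and it vanishes when $m_k<n_k$, since negative-degree operational classes are zero. Writing $m_1=n_1+j$ and $m_2=n_2+r-j$ with $0\le j\le r$, the surviving terms are exactly $\binom{n_1+n_2+r}{n_1+j}\kappa_{j}(X_1,D_1)\kappa_{r-j}(X_2,D_2)$. For $r=0$, the proposition on $\kappa_0$ gives $\kappa_0=v$, which yields the volume formula.

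The main obstacle is the first step: identifying the reflexive pluricanonical sheaf of the product with the box product, which also confirms that the product family is KSBA-stable with Kollár condition. We also have to justify the commutation $p_{1!}p_2^*=f_1^*f_{2!}$ on the DM stack level. For that we would cite Fulton's axioms (A12/A123) as extended by Vistoli.
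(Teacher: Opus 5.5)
Your proof is correct and follows the paper's route. You identify $c_1(K_{X/S}+D)$ with $p_1^*\lambda_1+p_2^*\lambda_2$, use the exterior-product identity $f_!(p_1^*a\cdot p_2^*b)=(f_1)_!(a)\,(f_2)_!(b)$, expand binomially, and discard the terms in which an exponent falls below the corresponding relative dimension. You add detail the paper omits: a reflexive-extension argument for the polarization (which the paper just asserts), and a derivation of the K\"unneth identity from the bivariant axioms (composition of orientations, (A12), (A123), base change), where the paper only says it is ``checked after every base change to $S$.''
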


\begin{proof}
  Put $\lambda_a=c_1(K_{X_a/S}+D_a)$, $a=1,2$.  On the fiber product
  one has
  \begin{displaymath}
    c_1(K_{X_1\times_SX_2/S}+D)=p_1^*\lambda_1+p_2^*\lambda_2.
  \end{displaymath}
  Compatibility of exterior products with proper pushforward, checked
  after every base change to $S$, gives
  \begin{displaymath}
    f_!\bigl(p_1^*\alpha\cdot p_2^*\beta\bigr)
      =(f_1)_!(\alpha)\,(f_2)_!(\beta).
  \end{displaymath}
  In the expansion of the $(n_1+n_2+r)$-th power, a term can
  contribute only when its two exponents are $n_1+j$ and $n_2+r-j$
  with $0\le j\le r$; terms with an exponent smaller than the
  corresponding relative dimension vanish. The coefficient of this
  term is $\binom{n_1+n_2+r}{n_1+j}$, which proves the formula as an
  equality of operational classes. The case $r=0$ is the stated volume
  formula.
\end{proof}

\begin{definition}[Kappa series]
\label{def:kappa-series}  
For a family $f\colon(X,D)\to S$ of relative dimension~$n$, set
\begin{displaymath}
  \fancyK_f(t)
    :=\sum_{r\geq0}\frac{\kappa_{r,S}(X,D)}{(n+r)!}\,t^r
    \in A^*(S)_\bQ[[t]].
\end{displaymath}
\end{definition}

\begin{corollary}[Multiplicativity of the kappa series]
  With the notation of Proposition~\ref{prop:product-formula},
  \begin{displaymath}
    \fancyK_{f_1\times_S f_2}(t)
      =\fancyK_{f_1}(t)\,\fancyK_{f_2}(t).
  \end{displaymath}
  More generally, for an ordered collection of finitely many families
  $f_a$, $1\leq a\leq m$, of relative dimensions $n_a$,
  \begin{displaymath}
    \kappa_{r,S}\!\Bigl(\prod_{a=1}^m X_a/S\Bigr)
      =\sum_{\sum_{a=1}^m j_a=r}
        \frac{(\sum_{a=1}^m n_a+r)!}
             {\prod_{a=1}^m(n_a+j_a)!}
        \prod_{a=1}^m\kappa_{j_a,S}(X_a,D_a),
  \end{displaymath}
  where the products are taken in the displayed order.
\end{corollary}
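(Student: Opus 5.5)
The plan is to derive both statements directly from Proposition~\ref{prop:product-formula}, using induction on $m$ for the general one. The single observation that makes everything work is that the binomial coefficient can be rewritten in factorial form:
\begin{displaymath}
  \binom{n_1+n_2+r}{n_1+j}=\frac{(n_1+n_2+r)!}{(n_1+j)!\,(n_2+r-j)!}.
\end{displaymath}

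For the two-factor identity, divide the formula of Proposition~\ref{prop:product-formula} by $(n_1+n_2+r)!$. The fiber product $X_1\times_S X_2$ has relative dimension $n_1+n_2$. After the division, the formula says exactly
\begin{displaymath}
  \frac{\kappa_{r,S}(X,D)}{(n_1+n_2+r)!}=\sum_{j=0}^r\frac{\kappa_{j,S}(X_1,D_1)}{(n_1+j)!}\cdot\frac{\kappa_{r-j,S}(X_2,D_2)}{(n_2+r-j)!}.
\end{displaymath}
This is precisely the coefficient of $t^r$ in $\fancyK_{f_1}(t)\fancyK_{f_2}(t)$, computed in $A^*(S)_\bQ[[t]]$ with the variable $t$ commuting with everything. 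The order of the factors in each product is kept, so no commutativity of $A^*(S)_\bQ$ is needed.

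For $m$ families, I would induct on $m$. The step uses the associativity of fiber products, $\prod_{a\le m}X_a=(\prod_{a<m}X_a)\times_S X_m$, together with the fact that the boundary $\sum_a p_a^*D_a$ is compatible with this grouping. One also needs the fiber product of KSBA-stable families to be a KSBA-stable family of relative dimension $\sum n_a$. This is implicit in Proposition~\ref{prop:product-formula}, which is applied to the partial products. At the level of $c_1$, it only requires $c_1(K_{X/S}+D)=\sum_a p_a^*\lambda_a$, and that follows by iterating the identity used in that proof. Iterating the two-factor multiplicativity then gives
\begin{displaymath}
  \fancyK_{\prod f_a}(t)=\prod_{a=1}^m\fancyK_{f_a}(t)
\end{displaymath}
in the displayed order. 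Extracting the coefficient of $t^r$ and multiplying by $(\sum_a n_a+r)!$ produces the multinomial-type formula, with the sum over tuples $(j_a)$ with $j_a\ge 0$ and $\sum_a j_a=r$.

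None of these steps is a real obstacle; the argument is bookkeeping. The one point I would check carefully is that the ordering convention is respected throughout. The ring $A^*(S)$ need not be commutative, so the induction must always attach the new factor on the right. Alternatively, one could bypass the ordering issue by redoing the proof of Proposition~\ref{prop:product-formula} directly for $m$ factors. That would expand $(\sum_a p_a^*\lambda_a)^{N}$ with $N=\sum_a n_a+r$. Those $p_a^*\lambda_a$ commute, being Chern classes of line bundles. One then keeps only the terms whose exponents satisfy $e_a=n_a+j_a$ with $j_a\ge 0$, and applies the iterated exterior-product pushforward
\begin{displaymath}
  f_!\Bigl(\prod_a p_a^*\alpha_a\Bigr)=\prod_a (f_a)_!\alpha_a.
\end{displaymath}
In that expansion, the multinomial coefficient $N!/\prod_a e_a!$ appears directly.
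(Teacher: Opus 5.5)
Your proposal is correct and follows the paper's proof: you divide the product formula by $(n_1+n_2+r)!$ to identify it with the coefficient of $t^r$ in $\fancyK_{f_1}(t)\,\fancyK_{f_2}(t)$, then induct on the number of factors. Your added care about attaching each new factor on the right, and your remark that the two-factor formula applies to partial products because it only needs $c_1(K_{X/S}+D)=\sum_a p_a^*\lambda_a$, fill in details the paper leaves implicit.
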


\begin{proof}
  Dividing the product formula by $(n_1+n_2+r)!$ identifies its
  right-hand side with the coefficient of $t^r$ in
  $\fancyK_{f_1}(t)\fancyK_{f_2}(t)$. The formula for finitely
  many factors follows by induction.
\end{proof}

\subsection{Crepant maps}

For a proper flat morphism $f\colon X\to S$ of pure relative
dimension~$n$ and a rational line bundle $\Lambda\in\Pic(X)_\bQ$ (not
necessarily $f$-ample),
write
\begin{displaymath}
  \kappa_{r,S}(X/S,\Lambda)
    :=f_!\bigl(c_1(\Lambda)^{n+r}\bigr)\in A^r(S)_\bQ.
\end{displaymath}
For a KSBA family and $\Lambda=\cO_X(K_{X/S}+D)$, this is the usual
class $\kappa_{r,S}(X,D)$.

We first record a criterion ensuring that the degree of a proper
morphism on fundamental cycles is preserved by arbitrary base change.

\begin{lemma}[Universal degree from a fiberwise dense finite-flat locus]
  \label{lem:universal-cycle-degree}
  Let
  \begin{displaymath}
    f_Y\colon Y\longrightarrow S,
    \qquad
    f_X\colon X\longrightarrow S
  \end{displaymath}
  be flat morphisms of pure relative dimension~$n$, let $e\geq1$, and
  let $\rho\colon Y\longrightarrow X$ be a proper morphism
  over~$S$. Suppose that there is an open subscheme $U\subset X$ such
  that, after setting $V:=\rho^{-1}(U)$,
  \begin{enumerate}
  \item the restriction
    \begin{displaymath}
      q:=\rho|_V\colon V\longrightarrow U
    \end{displaymath}
    is finite locally free of constant rank~$e$ or, equivalently, finite
    flat of constant degree~$e$, since $U$ is Noetherian;
  \item $U_s$ is dense in $X_s$ for every point $s\in S$.
  \end{enumerate}
  Then, for every morphism $T\to S$,
  \begin{displaymath}
    (\rho_T)_*[Y_T]=e[X_T]
    \qquad\text{in }Z_*(X_T),
  \end{displaymath}
  and hence also in $A_*(X_T)$.
\end{lemma}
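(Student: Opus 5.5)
The argument is local on the irreducible components of $X_T$. First I record that all hypotheses are stable under base change. Put $Y_T:=Y\times_ST$, $X_T:=X\times_ST$, $\rho_T$, $U_T$, and $V_T=\rho_T^{-1}(U_T)$. Then $f_{Y,T}$ and $f_{X,T}$ are flat of pure relative dimension $n$, and $\rho_T$ is proper. The restriction $q_T\colon V_T\to U_T$ is finite locally free of rank $e$, since being finite locally free of constant rank is preserved by base change. Finally, $(U_T)_t=U_s\times_{k(s)}k(t)$ is dense in $(X_T)_t=X_s\times_{k(s)}k(t)$ for every $t\in T$ over $s\in S$, because density of an open subset is preserved by field extension. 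So we may assume $T=S$ and prove $\rho_*[Y]=e[X]$ in $Z_*(X)$. Here $[\,\cdot\,]$ denotes the fundamental cycle, with multiplicities given by the lengths of the local rings at the generic points of the components.

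The key step is that every irreducible component of $X$ meets $U$, and likewise every component of $Y$ meets $V$. Let $\xi$ be the generic point of a component of $X$, and let $s=f_X(\xi)$. Since $f_X$ is flat, generizations lift along $f_X$ (going down). A point that is generic in $X$ therefore maps to a point of $S$ that admits no proper generization within the image. In fact $\xi$ is a generic point of the fiber $X_s$: if $\xi$ specialized from some $\eta\in X_s$, then $\eta$ would lie in the closure of a component of $X$ through $\eta$ containing $\xi$, contradicting genericity. Hypothesis (2) says $U_s$ is dense in $X_s$, so $U_s$ contains every generic point of $X_s$, and hence $\xi\in U$. The same argument applies to $Y$, using flatness of $f_Y$. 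Here $V_s=\rho_s^{-1}(U_s)$, and $V_s$ need not be dense a priori. So I instead argue via dimension. Every component of $Y_s$ has dimension $n$ by purity. A component of $Y$ contained in $Y\setminus V$ would have a generic point $\zeta$ that is generic in $Y_s$. Then $\rho(\zeta)\in X_s\setminus U_s$, and the image of that component of $Y_s$, being proper, would be an $n$-dimensional closed subset of $X_s\setminus U_s$. But $X_s\setminus U_s$ has dimension $<n$, since $U_s$ is dense in the pure $n$-dimensional $X_s$. This contradiction would finish the step, except that $\rho$ could contract; this is the main obstacle. To handle it, I would use that $\rho$ is proper and quasi-finite over $U$, and, if necessary, that the fibers of $\rho$ have dimension $0$ generically. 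If this route fails, the fallback is to note that the statement is about cycles supported over $X$. Components of $Y$ mapping into lower-dimensional loci then push forward to $0$ under $\rho_*$, because $\rho_*$ of a cycle whose image has smaller dimension vanishes. So they do not affect the identity anyway.

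With this in hand, the identity is checked component by component. By the fallback observation, every component of $Y$ not meeting $V$ contributes zero to $\rho_*[Y]$. Every component of $X$ meets $U$, so its coefficient in both sides can be computed after restricting to $U$. Restriction to the open set $U$ commutes with proper pushforward, which reduces the claim to $q_*[V]=e[U]$ for the finite locally free morphism $q$ of rank $e$. That identity is standard: for finite flat $q$ of degree $e$ one has $q_*[V]=q_*q^*[U]=e[U]$, by \cite[Example~1.7.4]{fulton1998intersection-theory}. The equality in $Z_*(X_T)$ follows, and hence also the equality in $A_*(X_T)$.
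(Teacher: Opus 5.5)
\textbf{There is a genuine gap: you never show that the components of $Y$ missing $V$ push forward to zero.} Your overall plan matches the paper's: show that $U$ contains every generic point of $X$, restrict to $U$ where $q_*[V]=e[U]$, and discard the components of $Y$ that miss $V$. Your reduction to $T=S$, using that density of $U_s$ survives the field extension $k(s)\subset k(t)$, is fine; it repackages the paper's two-step lifting of generizations.

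Your ``key step'' asserts that every component of $Y$ meets $V$, to be proved via quasi-finiteness of $\rho$ over $U$. That claim is false, and no such argument can work. A component $C$ with $C\cap V=\varnothing$ lies entirely over $X\setminus U$, where the hypotheses say nothing about $\rho$. For example, take $S=\Spec\bC$, $X=\bP^1$, $Y=\bP^1\sqcup\bP^1$, $U=X\setminus\{p\}$ and $e=1$. Let $\rho$ be the identity on the first copy and the constant map to $p$ on the second. Then the second copy misses $V$. Your fallback is only conditional: components \emph{mapping into lower-dimensional loci} push forward to $0$. Your last paragraph then uses it as if you had shown that every component missing $V$ satisfies $\dim\rho(C)<\dim C$. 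That inequality is exactly the nontrivial content of the paper's final paragraph, and your proposal never proves it.

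The fix is already in your fiberwise computation, read the other way: it does not give a contradiction, it proves the contraction. Let $\zeta$ be the generic point of a component $C$ with $C\cap V=\varnothing$. By flatness of $f_Y$, $s=f_Y(\zeta)$ is the generic point of a component $S_0$ of $S$. Also $\zeta$ is maximal in $Y_s$, so $\dim C=\dim S_0+n$. Since $\rho(\zeta)\in X_s\setminus U_s$ and $\dim(X_s\setminus U_s)<n$, you get
\begin{displaymath}
  \dim\rho(C)
    =\dim S_0+\operatorname{trdeg}_{k(s)}k(\rho(\zeta))
    <\dim S_0+n=\dim C,
\end{displaymath}
hence $\rho_*[C]=0$. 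The comparison must be made in the total space, not the fiber, which is why $s$ must be generic in $S$. The paper argues equivalently: $\rho(C)$ dominates $S_0$ and lies in $X\setminus U$, so it is a proper closed subset of a component of $X$ dominating $S_0$, and all such components have dimension $\dim S_0+n$.

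Your final step also needs one more sentence. A component $C$ meeting $V$ has its generic point in $V$, so $\rho_*[C]$ is supported on a prime cycle meeting $U$. Restriction to $U$ is injective on cycles all of whose prime components meet $U$. Both the surviving part of $\rho_*[Y]$ and $e[X]$ are of this type, so equality on $U$ gives equality in $Z_*(X)$.
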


\begin{proof}
  Fix $T\to S$, and put
  \begin{displaymath}
    U_T:=U\times_S T,
    \qquad
    V_T:=V\times_S T.
  \end{displaymath}
  The induced morphism $q_T\colon V_T\to U_T$ is finite locally free
  of rank~$e$.

  We first claim that $U_T$ contains every generic point of $X_T$.
  Let $\xi$ be such a point, let $t\in T$ be its image, and let
  $s\in S$ be the image of $t$. Since flat morphisms lift
  generalizations \cite[Lemma~A.4.1]{fulton1998intersection-theory},
  the flatness of $X_T\to T$ shows that $t$ is a generic point of an
  irreducible component of $T$, and hence that $\xi$ is a generic
  point of an irreducible component of $(X_T)_t$. Applying the same
  fact to the flat projection
  \begin{displaymath}
    (X_T)_t
      =
    X_s\times_{\Spec k(s)}\Spec k(t)
      \longrightarrow X_s,
  \end{displaymath}
  we see that the image of $\xi$ is a generic point of an
  irreducible component of $X_s$. Since $U_s$ is dense in $X_s$,
  it follows that $\xi\in U_T$.

  For the open immersions
  \begin{displaymath}
    j_X\colon U_T\hookrightarrow X_T,
    \qquad
    j_Y\colon V_T\hookrightarrow Y_T,
  \end{displaymath}
  one has $j_Y^*[Y_T]=[V_T]$. Finite local freeness gives
  \begin{displaymath}
    (q_T)_*[V_T]=e[U_T]
    \qquad\text{in }Z_*(U_T);
  \end{displaymath}
  this is the length identity for a free module of rank~$e$ at the
  generic points of the irreducible components of $U_T$. Since proper
  pushforward commutes with restriction to an open subscheme,
  \begin{displaymath}
    j_X^*(\rho_T)_*[Y_T]
      =(q_T)_*[V_T]
      =e[U_T]
      =j_X^*\bigl(e[X_T]\bigr).
  \end{displaymath}

  It remains to see that $(\rho_T)_*[Y_T]$ is a linear combination of
  the irreducible components of $X_T$. Let $C$ be an irreducible
  component of $Y_T$, and let $T_0$ be the irreducible component of
  $T$ dominated by~$C$. By flatness and pure relative dimension, every
  irreducible component of either $Y_T$ or $X_T$ that dominates $T_0$ has
  dimension $\dim T_0+n$.

  If $C\cap V_T\ne\varnothing$, then the generic point of $C$ belongs
  to $V_T$. Since $q_T$ is finite, the closure of
  $q_T(C\cap V_T)$ has the same dimension as~$C$; it is therefore an
  irreducible component of $X_T$ dominating~$T_0$. If
  $C\cap V_T=\varnothing$, then
  $\rho_T(C)\subset X_T\setminus U_T$ and $\rho_T(C)$ still
  dominates~$T_0$. The complement
  $X_T\setminus U_T$ contains no irreducible component of $X_T$, because
  $U_T$ contains all their generic points. Hence
  \begin{displaymath}
    \dim\rho_T(C)<\dim T_0+n=\dim C,
  \end{displaymath}
  and consequently $(\rho_T)_*[C]=0$.

  Thus both $(\rho_T)_*[Y_T]$ and $e[X_T]$ lie in the free abelian
  subgroup generated by the irreducible components of $X_T$.
  Restriction to $U_T$ is injective on this subgroup, because $U_T$
  contains all their generic points. The equality after restriction
  therefore proves
  \begin{displaymath}
    (\rho_T)_*[Y_T]=e[X_T]
  \end{displaymath}
  in $Z_*(X_T)$.
\end{proof}

\begin{theorem}[Crepant functoriality for polarized flat families]
  \label{thm:crepant-functoriality}
  Let
  \begin{displaymath}
    f_Y\colon Y\longrightarrow S,
    \qquad
    f_X\colon X\longrightarrow S,
  \end{displaymath}
  be proper flat morphisms of pure relative dimension~$n$, let
  $e\geq1$, let $\Lambda_X\in\Pic(X)_\bQ$, and let
  $\Lambda_Y\in\Pic(Y)_\bQ$. Let
  $\rho\colon Y\to X$ be a proper morphism over~$S$. Suppose that
  \begin{displaymath}
    \Lambda_Y=\rho^*\Lambda_X
    \qquad\text{in }\Pic(Y)_\bQ.
  \end{displaymath}
  Then the following statements hold.
  \begin{enumerate}
    \item If $S$ is pure-dimensional and
      \begin{displaymath}
        \rho_*[Y]=e[X]
        \qquad\text{in }A_*(X)_\bQ,
      \end{displaymath}
      then
      \begin{displaymath}
        \kappa_{r,S}(Y/S,\Lambda_Y)\cap[S]
          =e\,\kappa_{r,S}(X/S,\Lambda_X)\cap[S].
      \end{displaymath}
    \item Suppose that for every morphism $Z\to S$ from an integral
      scheme one has
      \begin{displaymath}
        (\rho_Z)_*[Y_Z]=e[X_Z]
        \qquad\text{in }A_*(X_Z)_\bQ.
      \end{displaymath}
      Then
      \begin{displaymath}
        \kappa_{r,S}(Y/S,\Lambda_Y)
          =e\,\kappa_{r,S}(X/S,\Lambda_X)
        \qquad\text{in }A^r(S)_\bQ.
      \end{displaymath}
  \end{enumerate}
  The hypothesis in \textup{(2)} holds under the assumptions of
  Lemma~\ref{lem:universal-cycle-degree}. In particular, it holds when
  $\rho$ is finite flat of constant degree~$e$. For $e=1$, it is
  enough that $\rho$ be an isomorphism over an open subset
  $U\subset X$ whose restriction to every fiber of $X\to S$ is dense.
\end{theorem}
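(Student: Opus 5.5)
The plan is to reduce both statements to the projection formula for $\rho$ and the compatibility of flat pullback with proper pushforward. Clearing denominators, we may assume $\Lambda_X$ and $\Lambda_Y$ are honest line bundles with $\Lambda_Y\simeq\rho^*\Lambda_X$. The key identity is
\begin{displaymath}
  c_1(\Lambda_Y)^{n+r}\cap\beta=c_1(\rho^*\Lambda_X)^{n+r}\cap\beta,
  \qquad
  \rho_*\bigl(c_1(\rho^*\Lambda_X)^{n+r}\cap\beta\bigr)
  =c_1(\Lambda_X)^{n+r}\cap\rho_*\beta,
\end{displaymath}
valid for any cycle $\beta$ on $Y$ or on any base change $Y_{S'}$. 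This is the projection formula for Chern classes of line bundles \cite[Proposition~2.5]{fulton1998intersection-theory}. Combined with $f_Y=f_X\circ\rho$, so that $(f_Y)_*=(f_X)_*\rho_*$, it gives
\begin{displaymath}
  (f_Y)_*\bigl(c_1(\Lambda_Y)^{n+r}\cap\beta\bigr)
  =(f_X)_*\bigl(c_1(\Lambda_X)^{n+r}\cap\rho_*\beta\bigr).
\end{displaymath}

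For (1), apply this with $\beta=[Y]$. By the definition of $f_!$ in construction (c), $\kappa_{r,S}(Y/S,\Lambda_Y)\cap[S]=(f_Y)_*\bigl(c_1(\Lambda_Y)^{n+r}\cap f_Y^*[S]\bigr)$. Since $S$ is pure-dimensional and $f_Y$ is flat of pure relative dimension $n$, the flat pullback satisfies $f_Y^*[S]=[Y]$, and likewise $f_X^*[S]=[X]$. The displayed identity then becomes $(f_X)_*\bigl(c_1(\Lambda_X)^{n+r}\cap\rho_*[Y]\bigr)$. The hypothesis $\rho_*[Y]=e[X]$ holds in $A_*(X)_\bQ$, and capping with Chern classes and pushing forward both descend to rational equivalence. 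So this equals $e\,\kappa_{r,S}(X/S,\Lambda_X)\cap[S]$.

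For (2), an operational class on $S$ is determined by its action on $A_*(S')$ for all $S'\to S$. By linearity and compatibility with proper pushforward, it suffices to test on $[Z]$ for integral closed $Z\subset S'$. Indeed, for $i\colon Z\to S'$ one has $c\cap i_*[Z]=i_*(c\cap[Z])$, where $c$ on the right is pulled back to $Z$. By Proposition~\ref{prop:functoriality}, the pullback of $\kappa_{r,S}(Y/S,\Lambda_Y)$ to $Z$ is the kappa class of the base-changed family $Y_Z\to Z$ with the pulled-back bundle; the proof there is only the base-change compatibility of $f_!$ and uses no KSBA property. The base-changed data still satisfy $\Lambda_{Y_Z}=\rho_Z^*\Lambda_{X_Z}$. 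Part (1) applied over the integral scheme $Z$, with the hypothesis $(\rho_Z)_*[Y_Z]=e[X_Z]$, then gives equality on $[Z]$. This proves (2).

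The final remarks follow directly. Under the assumptions of Lemma~\ref{lem:universal-cycle-degree}, the lemma yields $(\rho_T)_*[Y_T]=e[X_T]$ for every $T\to S$, in particular for integral $Z$. If $\rho$ is finite flat of degree $e$, take $U=X$. If $e=1$ and $\rho$ is an isomorphism over a fiberwise dense open $U$, then $V\to U$ is finite locally free of rank $1$. The main point needing care is the identification $f^*[S]=[X]$ in (1), i.e.\ that flat pullback of the fundamental class is the fundamental class; this is \cite[Lemma~1.7.1]{fulton1998intersection-theory}, applied over each component of $S$, which is where pure-dimensionality of $S$ is used. I also need to check that the projection formula holds over DM stacks in Vistoli's setting, which it does with rational coefficients.
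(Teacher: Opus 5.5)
Your proposal is correct and follows essentially the same route as the paper: for (1), the projection formula along $\rho$ combined with $f_X^*[S]=[X]$ and $f_Y^*[S]=[Y]$; for (2), testing on integral closed subschemes of arbitrary base changes, using base-change functoriality of $f_!$ and then applying (1) over the integral base. Your explicit check of the closing remarks via Lemma~\ref{lem:universal-cycle-degree} (taking $U=X$ in the finite flat case, and rank-one local freeness when $e=1$) spells out what the paper leaves implicit.
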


\begin{proof}
  Put $\lambda:=c_1(\Lambda_X)$ and $m:=n+r$. The pullback hypothesis
  gives $c_1(\Lambda_Y)=\rho^*\lambda$, compatibly with every base
  change.

  Under the assumptions of \textup{(1)}, flatness gives
  $f_X^*[S]=[X]$ and $f_Y^*[S]=[Y]$. The projection formula yields
  \begin{displaymath}
    \begin{aligned}
      \kappa_{r,S}(Y/S,\Lambda_Y)\cap[S]
        &=(f_Y)_*\bigl(\rho^*\lambda^m\cap[Y]\bigr) \\
        &=(f_X)_*\bigl(\lambda^m\cap\rho_*[Y]\bigr) \\
        &=e\,(f_X)_*\bigl(\lambda^m\cap[X]\bigr) \\
        &=e\,\kappa_{r,S}(X/S,\Lambda_X)\cap[S].
    \end{aligned}
  \end{displaymath}

  Now assume the hypothesis of \textup{(2)}. Let $g\colon T\to S$ be any morphism and
  let $\iota\colon Z\hookrightarrow T$ be an integral closed
  subscheme. Applying the hypothesis to $Z\to S$ and then applying
  \textup{(1)} over the integral base~$Z$ gives
  \begin{displaymath}
    \kappa_{r,Z}(Y_Z/Z,\Lambda_{Y,Z})\cap[Z]
      =e\,\kappa_{r,Z}(X_Z/Z,\Lambda_{X,Z})\cap[Z].
  \end{displaymath}
  Functoriality and the projection formula for operational classes imply
  \begin{displaymath}
    \begin{aligned}
      \kappa_{r,T}(Y_T/T,\Lambda_{Y,T})\cap\iota_*[Z]
        &=\iota_*\bigl(
            \kappa_{r,Z}(Y_Z/Z,\Lambda_{Y,Z})\cap[Z]
          \bigr) \\
        &=e\,\iota_*\bigl(
            \kappa_{r,Z}(X_Z/Z,\Lambda_{X,Z})\cap[Z]
          \bigr) \\
        &=e\,\kappa_{r,T}(X_T/T,\Lambda_{X,T})\cap\iota_*[Z].
    \end{aligned}
  \end{displaymath}
  Integral cycles generate $A_*(T)_\bQ$, so the two operational
  classes agree after every base change $T\to S$, proving \textup{(2)}.
\end{proof}

The theorem and its proof also apply over a Deligne--Mumford base:
one tests the asserted operational identity after pullback to schemes.
For KSBA families, the applications we have in mind concern crepant
morphisms $\rho\colon Y\to X$ with the $\bQ$-line bundles
$\Lambda_X=\cO_X(K_{X/S}+D)$ and
$\Lambda_Y=\cO_Y(K_{Y/S}+\Delta)$. One such application appears in the
next section on normalization, and the other appears in
Section~\ref{sec:wall-crossing} on wall crossing.

\subsection{Normalization}
\label{sec:normalization}

Let $f\colon(X,D)\to S$ be a family of KSBA-stable pairs over an
integral normal scheme, and let
\begin{displaymath}
  \nu\colon(X^\nu,D^\nu+\Gamma)
     =\bigsqcup_a(X_a,D_a+\Gamma_a)\longrightarrow(X,D)
\end{displaymath}
be the normalization, with conductor $\Gamma$.
Here, $D_a = D^\nu|_{X_a}$, $\Gamma_a = \Gamma|_{X_a}$, and $f_a =
f\circ \nu |_{X_a}$. One has the adjunction formula
\begin{equation}
  \label{eq:normalization-adjunction}
    K_{X^\nu/S}+D^\nu+\Gamma
    =\nu^*(K_{X/S}+D).
\end{equation}
By
\cite[Lemma~3.2]{patakfalvi2017ampleness-cm}, each morphism
$f_a\colon(X_a,D_a+\Gamma_a)\to S$ is a family of KSBA-stable pairs.
In particular, each morphism $f_a$ is flat. 

\begin{proposition}[Additivity under normalization]
  \label{prop:normalization-additivity}
  With the notation above, one has
  \begin{displaymath}
    \kappa_{r,S}(X,D)
      =
    \sum_a\kappa_{r,S}(X_a,D_a+\Gamma_a)
    \qquad\text{in }A^r(S)_\bQ.
  \end{displaymath}
\end{proposition}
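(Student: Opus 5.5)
The plan is to apply Theorem~\ref{thm:crepant-functoriality}(2) with $e=1$, taking $Y=X^\nu=\bigsqcup_a X_a$ with $f_Y=\bigsqcup_a f_a$, $\rho=\nu$, $\Lambda_X=\cO_X(K_{X/S}+D)$ and $\Lambda_Y=\cO_{X^\nu}(K_{X^\nu/S}+D^\nu+\Gamma)$. The conclusion then reads
\[
  \kappa_{r,S}(X^\nu/S,\Lambda_Y)=\kappa_{r,S}(X/S,\Lambda_X)=\kappa_{r,S}(X,D).
\]
Since $X^\nu$ is a disjoint union, $(f_Y)_!$ of a class splits as the sum of $(f_a)_!$ of its restrictions to the $X_a$. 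Hence the left side equals $\sum_a\kappa_{r,S}(X_a,D_a+\Gamma_a)$, which is the claim.

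The hypotheses are checked in order.

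\textbf{Flatness.} $f_X$ is flat of pure relative dimension $n$. Each $f_a$ is flat by the cited \cite[Lemma~3.2]{patakfalvi2017ampleness-cm}, and $f_a$ is proper, being the composition of a finite map with a proper one. The relative dimension of each $f_a$ is $n$, since $\nu$ is finite and birational onto components of $X$.

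\textbf{The pullback identity.} The identity $\Lambda_Y=\nu^*\Lambda_X$ in $\Pic(X^\nu)_\bQ$ is the adjunction formula \eqref{eq:normalization-adjunction}. I will make sure it is read at the level of the $\bQ$-line bundles $\frac1N[\cL]$, i.e.\ as an isomorphism $\nu^*L_{X/S}\simeq L_{X^\nu/S}$ of $N$-th reflexive powers. This holds because both sides are line bundles on the $S_2$ scheme $X^\nu$ that agree on the locus $\nu^{-1}(U)$, which has complement of codimension $\ge2$ in every fiber.

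\textbf{The degree hypothesis.} The hypothesis of (2) is obtained from Lemma~\ref{lem:universal-cycle-degree} with $e=1$. I take $U\subset X$ to be the open set where $f$ is smooth, i.e.\ the locus that is fiberwise smooth (in particular normal). Over $U$ the normalization $\nu$ is an isomorphism. This needs a little care, since normalization of the total space is the relevant map. The point is that $U$ is smooth over the normal base $S$, so $U$ is normal, and therefore $\nu^{-1}(U)\to U$ is an isomorphism.

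The main obstacle is condition (2) of the lemma: the density of $U_s$ in $X_s$ for every $s\in S$. I would argue that fibers of a KSBA-stable family are reduced and demi-normal, hence generically smooth on each component. Since $f$ is flat with reduced fibers, $f$ is smooth at the generic points of every fiber component, so $U_s$ is dense.

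A further subtlety is that the hypothesis of Theorem~\ref{thm:crepant-functoriality}(2) requires arbitrary base change $Z\to S$. This is exactly what the lemma provides, via its universal formulation. For an integral normal $S$ no further input is needed, and the operational identity follows.
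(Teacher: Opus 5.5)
Your proposal is correct and follows the paper's proof essentially step for step. The relative smooth locus $U$ is normal because it is smooth over the normal base, and fiberwise dense because the fibers are reduced in characteristic zero; it therefore feeds Lemma~\ref{lem:universal-cycle-degree} with $e=1$. The adjunction formula \eqref{eq:normalization-adjunction} supplies $\Lambda_Y=\nu^*\Lambda_X$, Theorem~\ref{thm:crepant-functoriality}\textup{(2)} then gives the operational identity, and the Gysin pushforward splits over the disjoint union.

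One caution about your optional codimension-two justification of the adjunction identity. It works only with the smooth-or-double-crossing locus of Section~\ref{sec:ksba-stack}, and even there you must still verify the identity along the double locus. It does not work with the smooth locus you use later: the preimage of that locus misses the conductor $\Gamma$, which meets the fibers in codimension one. The paper simply takes \eqref{eq:normalization-adjunction} as given.
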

\begin{proof}
  Let $U\subset X$ be the relative smooth locus of $f$, and put
  $V:=\nu^{-1}(U)$. Since the fibers of $f$ are reduced over the
  characteristic-zero field~$\bC$, the open set $U_s$ contains the
  generic point of every irreducible component of $X_s$, and hence is
  dense in $X_s$ for every $s\in S$. The morphism $U\to S$ is
  smooth and $S$ is normal, so $U$ is normal. Consequently,
  \begin{displaymath}
    \nu|_V\colon V\longrightarrow U
  \end{displaymath}
  is an isomorphism.

  The disjoint union $X^\nu=\bigsqcup_aX_a$ and $X$ are proper and
  flat over $S$ of pure relative dimension~$n$. Therefore
  Lemma~\ref{lem:universal-cycle-degree}, with $e=1$, gives
  \begin{displaymath}
    (\nu_T)_*[X^\nu_T]=[X_T]
  \end{displaymath}
  after every base change $T\to S$. Together with the adjunction
  identity~\eqref{eq:normalization-adjunction},
  Theorem~\ref{thm:crepant-functoriality}\textup{(2)} gives
  \begin{displaymath}
    \kappa_{r,S}(X^\nu/S,
      \cO_{X^\nu}(K_{X^\nu/S}+D^\nu+\Gamma))
      =\kappa_{r,S}(X,D).
  \end{displaymath}
  The Gysin pushforward for a finite disjoint union is the sum of the
  pushforwards from its components, so the left side is
  $\sum_a\kappa_{r,S}(X_a,D_a+\Gamma_a)$, as required.
\end{proof}

\subsection{Descent to the coarse moduli space}
\label{sec:stack-to-coarse}

Let $\ff\colon (\cX,\cD)\to \cM$ be the universal family, and let
$\bm{X}$ and $\bm{M}$ be the coarse moduli spaces of $\cX$
and~$\cM$. By the universal property of coarse moduli spaces,
there is a canonical commutative diagram
\begin{equation}\label{eq:coarse-moduli}
\begin{tikzcd}
  \cX
  \arrow[r, "\pi'"]
  \arrow[d, "\ff"']
  &
  \bm{X}
  \arrow[d, "\bm{f}"]
  \\
  \cM
  \arrow[r, "\pi"']
  &
  \bm{M}.
\end{tikzcd}
\end{equation}
Usually, however, this diagram is not a fiber square, and the morphism
$\bm{f}$ need not be flat. Applying \cite[Prop.~6.1(i),(ii)]{vistoli1989intersection-theory}
separately to the coarse maps $\pi$ and $\pi'$, one obtains the
graded isomorphisms
\begin{align*}
  \pi_*&\colon A_*(\cM)_\bQ \isoto A_*(\bm{M})_\bQ,
  &
  \pi'_*&\colon A_*(\cX)_\bQ \isoto A_*(\bm{X})_\bQ,\\
  \pi^*&\colon A^*(\bm{M})_\bQ \isoto A^*(\cM)_\bQ,
  &
  \pi'{}^*&\colon A^*(\bm{X})_\bQ \isoto A^*(\cX)_\bQ.
\end{align*}

\begin{proposition}[Descent to the coarse moduli space]
  \label{prop:coarse-descent}
  There is a unique class $\bar\kappa_r\in A^r(\bm{M})_{\bQ}$ such
  that $\pi^*\bar\kappa_r=\kappa_r$. Every family $f\colon (X,D)\to S$
  of KSBA-stable pairs with coarse moduli map $\mu\colon S\to \bm{M}$
  satisfies $\kappa_{r,S}=\mu^*\bar\kappa_r$.
\end{proposition}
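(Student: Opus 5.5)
Existence and uniqueness of $\bar\kappa_r$ follow at once from the isomorphism $\pi^*\colon A^*(\bm{M})_\bQ\isoto A^*(\cM)_\bQ$ of \cite[Prop.~6.1]{vistoli1989intersection-theory} recalled above. We set $\bar\kappa_r:=(\pi^*)^{-1}\kappa_r$. The content of the proposition is therefore the second assertion: for an arbitrary KSBA family $f\colon(X,D)\to S$ over a scheme $S$, the class $\kappa_{r,S}$ is the pullback of $\bar\kappa_r$ along the coarse moduli map $\mu$.

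For this, let $\phi\colon S\to\cM$ be the classifying morphism of the family. Then $(X,D)\cong S\times_\cM(\cX,\cD)$, and the line bundle $L_{X/S}$ is the pullback of $\cL$; this is the base-change compatibility of $L_{X/S}$ recorded in Section~\ref{sec:ksba-stack}. The coarse moduli map is $\mu=\pi\circ\phi$. Functoriality (Proposition~\ref{prop:functoriality}), applied over the Deligne--Mumford base $\cM$ as in the remark following Theorem~\ref{thm:crepant-functoriality}, gives
\begin{displaymath}
  \kappa_{r,S}=\phi^*\kappa_r=\phi^*\pi^*\bar\kappa_r=(\pi\circ\phi)^*\bar\kappa_r=\mu^*\bar\kappa_r .
\end{displaymath}
The third equality uses contravariant functoriality of pullback of operational classes, namely $(\pi\phi)^*=\phi^*\pi^*$. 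This holds in Vistoli's bivariant theory for morphisms of DM stacks and schemes, because pullback of an operational class is defined by restricting the rule to maps over the target.

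The main point requiring care is the identification $\phi^*\kappa_r=\kappa_{r,S}$ when $\phi$ is a morphism from a scheme to a stack. We would verify it directly from the definition. Any $T\to S$ composes to $T\to\cM$, and the fiber product $\cX\times_\cM T$ is the scheme $X_T$, since $\ff$ is representable. Both operational classes then act on $\alpha\in A_k(T)$ by the same formula,
\begin{displaymath}
  (f_T)_*\bigl(c_1(\cL|_{X_T})^{n+r}\cap f_T^*\alpha\bigr)/N^{n+r}.
\end{displaymath}
This uses construction (c) together with the compatibility of Chern classes and flat pullback with base change.

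A subtle point is that $\pi^*$ is defined by Vistoli via the isomorphism on the level of bivariant groups rather than by a naive rule, since $\pi$ is not flat or representable. We would rely on \cite[Prop.~6.1]{vistoli1989intersection-theory} to ensure that this $\pi^*$ is compatible with composition with $\phi$.
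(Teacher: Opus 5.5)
Your proposal is correct and follows the paper's proof essentially verbatim. Existence and uniqueness come from Vistoli's isomorphism $\pi^*$, and the second assertion is the chain $\kappa_{r,S}=\phi^*\kappa_r=\phi^*\pi^*\bar\kappa_r=\mu^*\bar\kappa_r$ for the classifying map $\phi$ with $\mu=\pi\circ\phi$. Your closing "subtle point" is unnecessary. Pullback of an operational class along any morphism, including the non-flat, non-representable $\pi$, is defined by restricting the rule, so $(\pi\phi)^*=\phi^*\pi^*$ is automatic. Vistoli's Prop.~6.1 is needed only for the bijectivity of $\pi^*$.
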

\begin{proof}
  The isomorphism $\pi^*\colon A^r(\bm{M})_\bQ\isoto A^r(\cM)_\bQ$
  gives a unique class $\bar\kappa_r$ with
  $\pi^*\bar\kappa_r=\kappa_r$. A family determines a classifying map
  $\widetilde\mu\colon S\to\cM$, and its coarse moduli map is
  $\mu=\pi\circ\widetilde\mu$. Therefore
  \begin{displaymath}
    \kappa_{r,S}=\widetilde\mu^*\kappa_r
      =\widetilde\mu^*\pi^*\bar\kappa_r
      =\mu^*\bar\kappa_r.
  \end{displaymath}
\end{proof}

\subsection{Vanishing}
\label{sec:vanishing}

\begin{definition}\label{def:variation}
  Let $f\colon(X,D)\to S$ be a family of KSBA-stable pairs. The
  \emph{variation} $\var f$ is the dimension of the closure of the
  image of its coarse moduli map $S\to\bm M$.
\end{definition}

\begin{theorem}[Vanishing of kappa polynomials]\label{thm:vanishing}
  Let $r_1,\dotsc,r_m\geq0$. If
  \begin{displaymath}
    r_1+\dotsb+r_m>\var f,
  \end{displaymath}
  then
  \begin{displaymath}
    \kappa_{r_1,S}\cdots\kappa_{r_m,S}=0
    \qquad\text{in }A^{r_1+\dotsb+r_m}(S)_\bQ.
  \end{displaymath}
  Consequently, every homogeneous polynomial in the kappa classes of
  total codimension greater than $\var f$ vanishes. In particular,
  \begin{displaymath}
    \kappa_{r,S}=0\quad(r>\var f),
    \qquad
    \kappa_{1,S}^{\,\var f+1}=0.
  \end{displaymath}
  For the universal family, all kappa polynomials of codimension greater
  than $\dim\bm M$ vanish.
\end{theorem}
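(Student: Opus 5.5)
The strategy is to use descent to the coarse moduli space, Proposition~\ref{prop:coarse-descent}, to write each $\kappa_{r,S}$ as the pullback $\mu^*\bar\kappa_r$ along the coarse moduli map $\mu\colon S\to\bm M$. Since operational pullback is a ring homomorphism, this gives
\begin{displaymath}
  \kappa_{r_1,S}\cdots\kappa_{r_m,S}=\mu^*\bigl(\bar\kappa_{r_1}\cdots\bar\kappa_{r_m}\bigr).
\end{displaymath}
Let $W\subset\bm M$ be the closure of $\mu(S)$, with reduced structure, so $\dim W=\var f$. The map $\mu$ factors as $S\xrightarrow{\mu_W}W\xrightarrow{j}\bm M$. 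Set-theoretically this is clear. Scheme-theoretically, one may need to replace $S$ by $S_{\rm red}$ first, which is harmless because $A_*(S)=A_*(S_{\rm red})$ and operational classes only act on cycles. The product then equals $\mu_W^*\bigl(j^*(\bar\kappa_{r_1}\cdots\bar\kappa_{r_m})\bigr)$. It therefore suffices to show that $c:=j^*(\bar\kappa_{r_1}\cdots\bar\kappa_{r_m})\in A^{p}(W)_\bQ$ vanishes, where $p=\sum r_i>\dim W$.

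The key step is to show that on a scheme $W$ of dimension $d$, every operational class of degree $p>d$ vanishes. This needs care. For a test map $T\to W$, $A_k(T)$ need not vanish, so one cannot argue by dimension of cycles on $T$ alone. Instead I would use that the classes here have a geometric origin, namely Gysin pushforwards from a flat family. I would pull the universal family back along a finite cover. Choose a scheme $\widetilde W$ with a finite surjective map $\widetilde W\to\cM\times_{\bm M}W$; finite-type DM stacks admit finite surjective maps from schemes. Over $\widetilde W$ there is an honest flat family $f_{\widetilde W}\colon X_{\widetilde W}\to\widetilde W$ with $\dim\widetilde W=d$ and $\dim X_{\widetilde W}=d+n$.

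Now test on an integral $Z\to\widetilde W$ (any base change). The class $\kappa_{r,Z}\cap[Z]$ is $(f_Z)_*(\ell^{n+r}\cap[X_Z])$. Its image under the proper map $Z\to\widetilde W'$ can be computed on the image of $Z$ in $\widetilde W$ via the projection formula. That image has dimension at most $d$, so the pushforward lands in degree $\dim Z-r$.

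This alone does not give vanishing; the right tool is a factorization. A product $\prod\kappa_{r_i,Z}$ capped with $[Z]$ can be computed as follows. Let $Z\to Z_0\subset\widetilde W$ be the image, with $\dim Z_0\le d$. Every kappa product is then pulled back from $Z_0$. By the ``refined'' nature of the operational classes on the family over $Z_0$, the class $\prod\kappa\cap[Z]$ equals a refined Gysin pullback along $Z\to Z_0$ of $\prod\kappa\cap[Z_0]\in A_{\dim Z_0-p}(Z_0)=0$. That refined Gysin map is defined because $\kappa$'s are built from Chern classes of line bundles and flat pullback on the family.

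Concretely, I would expand $\prod_i\kappa_{r_i}$ via the product formula as a single Gysin pushforward over the $m$-fold fiber product $X^{(m)}=X\times_S\cdots\times_S X$ of the class $\prod_i p_i^*\ell^{n+r_i}$. The flat pullback of $[Z_0]$ to $X^{(m)}_{Z_0}$ is a cycle on a scheme of dimension $\dim Z_0+mn$. Capping with $mn+p$ line-bundle classes gives a class in $A_{\dim Z_0-p}(X^{(m)}_{Z_0})$. This group is zero when $\dim Z_0<p$, since the space is supported over the $\le d$-dimensional $Z_0$. More precisely, each Chern class cap is represented by cycles in the zero loci, which are pulled back from $Z_0$. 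This is the main obstacle: making this ``supported over a low-dimensional base'' argument rigorous for an arbitrary base change $T\to S$ rather than just the fundamental class. I would handle it by Fulton's Kleiman-type result that Chern classes of line bundles pulled back from a base $B$ factor through $A_*$ of $B$ (\cite[Prop.~2.5, Ex.~17.3.x]{fulton1998intersection-theory}). That reduces the claim to showing the operational class $\prod\bar\kappa_{r_i}$ restricted to $W$ lies in the image of $A^p$ operations that factor through $A_{*}(W)$ in degrees below zero.

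Finally, finite surjective descent lets me pass from $\widetilde W$ back to $W$, since $A^*(W)_\bQ\to A^*(\widetilde W)_\bQ$ is injective for finite surjective maps (by the projection formula and degree division on integral test cycles). The special cases $\kappa_{r,S}=0$ for $r>\var f$ and $\kappa_{1,S}^{\var f+1}=0$ follow immediately. The universal case is $S=\cM$, where $\var=\dim\bm M$.
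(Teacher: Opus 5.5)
Your first step, descending to the closure $W$ of the image of the coarse moduli map and factoring the kappa product through $c=j^*(\bar\kappa_{r_1}\cdots\bar\kappa_{r_m})\in A^p(W)_\bQ$, matches the paper. You also correctly isolate the crux. One must show that $c$ kills every cycle on every $T\to W$, including cycles of dimension larger than $\dim W$.

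There is a genuine gap: the proposal never establishes this. The step ``$\prod\kappa\cap[Z]$ is a refined Gysin pullback along $Z\to Z_0$ of $\prod\kappa\cap[Z_0]$'' fails.
\begin{itemize}
\item For an arbitrary morphism $Z\to Z_0$ to a possibly singular $Z_0$ there is no Gysin map $A_*(Z_0)\to A_*(Z)$. That would need flatness, an lci morphism, or smoothness of $Z_0$.
\item On a singular base an operational class is not determined by its cap with the fundamental class. So $c\cap[Z_0]=0$ says nothing a priori about $c\cap[Z]$.
\item The appeal to Chern classes of line bundles pulled back from the base does not apply, because $\ell$ lives on the total space of the family, not on $W$.
\item If you apply that idea on the total space $X^{(m)}_{Z_0}$ instead, whose dimension is $<mn+p$, you need the same unproved statement. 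Namely, an operational class of degree exceeding the dimension of a singular scheme must vanish.
\end{itemize}
Your final sentence of that step restates what must be shown. The finite cover $\widetilde W\to\cM\times_{\bm M}W$ gains nothing, since $\widetilde W$ is still singular.

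The missing idea is to prove $A^p(W)_\bQ=0$ for $p>\dim W$ directly, using resolution of singularities. Once that holds, the geometric origin of the kappa classes is irrelevant.

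The paper does exactly this. It takes an envelope $g\colon\widetilde W\to W$ with $\widetilde W$ a finite disjoint union of smooth varieties of dimension at most $\dim W$. It then uses injectivity of $g^*$ on bivariant classes. Finally, on each smooth component, Poincar\'e duality $A^p\cong A_{\dim-p}$ gives $A^p(\widetilde W)_\bQ=0$ for $p>\dim W$.

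Your own degree-and-projection-formula argument for injectivity along finite surjective maps already extends, with $\bQ$-coefficients, to any proper surjective map. Over each integral test cycle $V$, choose a subvariety of the preimage that maps generically finitely onto $V$. So replacing your finite cover by the disjoint union of resolutions of the irreducible components of $W$ would close the gap.
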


\begin{proof}
  Let $\mu\colon S\to\bm M$ be the coarse moduli map, let
  $Z\subset\bm M$ be its scheme-theoretic image, and write
  $j\colon Z\hookrightarrow\bm M$. Then
  $\dim Z=\var f$, and Proposition~\ref{prop:coarse-descent} gives
  $\kappa_{r,S}=\mu_Z^*j^*\bar\kappa_r$, where $\mu_Z\colon S\to Z$ is the induced morphism. Hence
  \begin{displaymath}
    \kappa_{r_1,S}\cdots\kappa_{r_m,S}
      =\mu_Z^*\bigl(
        j^*\bar\kappa_{r_1}\cdots j^*\bar\kappa_{r_m}
      \bigr).
  \end{displaymath}
  To justify the required dimension vanishing, choose, by resolution of
  singularities and Noetherian induction, an envelope
  $g\colon\widetilde Z\to Z$ with $\widetilde Z$ a finite disjoint
  union of smooth varieties of dimension at most $\dim Z$. Pullback
  \begin{displaymath}
    g^*\colon A^p(Z)_\bQ\longrightarrow A^p(\widetilde Z)_\bQ
  \end{displaymath}
  is injective by descent for bivariant classes along envelopes
  \cite[\href{https://stacks.math.columbia.edu/tag/0GUC}
  {Lemma~42.35.6}]{stacks-project}. Since $\widetilde Z$ is smooth,
  $A^p(\widetilde Z)_\bQ=0$ for $p>\dim Z$.
  Thus $A^p(Z)_\bQ=0$ for $p>\dim Z$, and the class in parentheses
  vanishes when $r_1+\dotsb+r_m>\dim Z$.
\end{proof}

\subsection{Nonnegativity and numerical triviality}
\label{sec:nonnegativity}

\begin{definition}\label{def:degree}
  Let $S$ be a proper, pure $r$-dimensional scheme or DM stack
  over~$\cM$. The degree of $\kappa_r$ on $S$ is 
  \begin{displaymath}
    \deg\kappa_r|_S:=\deg\bigl(\kappa_{r,S}\cap[S]\bigr)\in\bQ,
    \qquad \deg\colon A_0(S)_\bQ\to\bQ.
  \end{displaymath}
\end{definition}

\begin{proposition}\label{prop:top-kappa-positivity}
  For a KSBA family $f\colon(X,D)\to S$ over a proper integral base of
  dimension $r$, one has
  \begin{displaymath}
    \deg\kappa_r|_S\ge0.
  \end{displaymath}
  If the generic fiber is log canonical and the family has maximal
  variation, then
  \begin{displaymath}
    \deg\kappa_r|_S>0.
  \end{displaymath}
\end{proposition}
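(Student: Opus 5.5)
The plan is to identify $\deg\kappa_r|_S$ with a top self-intersection number on the total space and then invoke positivity results for the log canonical divisor of stable families. Write $L:=K_{X/S}+D$, a $\bQ$-Cartier $\bQ$-divisor on $X$, and $\dim X = n+r$. Since $f$ is flat, $f^*[S]=[X]$. By Definition~\ref{def:kappa}, read through construction (c),
\begin{displaymath}
  \deg\kappa_r|_S=\deg\bigl(f_*(c_1(L)^{n+r}\cap[X])\bigr)=(L^{n+r})_X,
\end{displaymath}
the top self-intersection of $L$ on the proper scheme $X$.

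To reduce to a normal base, I take the normalization $S^\nu\to S$, which is proper and birational. Corollary~\ref{cor:base-change-degree} with $e=1$ shows the degree is unchanged, so we may assume $S$ is normal. Next I pass to the normalization of $X$ via Proposition~\ref{prop:normalization-additivity}. This writes $\deg\kappa_r|_S$ as a sum of $(L_a^{n+r})$, where $L_a=K_{X_a/S}+D_a+\Gamma_a$ on the components $X_a$. By \cite[Lemma~3.2]{patakfalvi2017ampleness-cm}, each $f_a$ is again a KSBA-stable family.

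Nonnegativity comes from nefness. For a stable family over a normal proper base, $K_{X_a/S}+D_a+\Gamma_a$ is nef; this is the standard weak positivity / nefness theorem of Fujino and Kovács--Patakfalvi. The top self-intersection of a nef $\bQ$-Cartier divisor on a proper variety is $\ge 0$ by Kleiman's theorem, so every summand is $\ge0$. If one prefers to avoid the normalization step, the same conclusion follows directly from nefness of $K_{X/S}+D$ on $X$.

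For strict positivity, bigness suffices, since a nef and big divisor has positive top self-intersection. When the generic fiber is log canonical, $X$ is irreducible near the generic fiber and the normalization is trivial generically. The key input, and the main obstacle, is then the bigness of $K_{X/S}+D$ for a family of maximal variation whose generic fiber is lc. This is Kovács--Patakfalvi's theorem (used in \cite{patakfalvi2017ampleness-cm}), which says $K_{X/S}+D$ is big when $\var f=\dim S$. The hypothesis on the generic fiber is needed exactly there, and I would need to check that the reductions to a normal base preserve both maximal variation and the lc condition; they do, since both are generic conditions.

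Finally, positivity of one summand together with nonnegativity of the rest yields $\deg\kappa_r|_S>0$. As an alternative to the bigness theorem, I would note that $\kappa_r=\kappa_1^r\cdot(\text{const})$ is not literally available, so bigness of $L$ on $X$ seems the cleanest route.
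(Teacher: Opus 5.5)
Your argument is essentially the paper's. You identify $\deg\kappa_r|_S$ with $(K_{X/S}+D)^{n+r}$ on $X$ and change the base birationally via Corollary~\ref{cor:base-change-degree} with $e=1$. You then get $\ge0$ from nefness and $>0$ from nef plus big. One step does not go through as written.

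\textbf{The gap: projectivity of the base.} You only normalize the base, so $S^\nu$ is a normal \emph{proper} variety, not necessarily projective. The positivity results you invoke (Fujino, Kov\'acs--Patakfalvi, and \cite[Theorem~2.13, Proposition~2.15]{patakfalvi2017ampleness-cm} as used in the paper) are formulated over projective bases; the paper applies them over a smooth projective one. For nefness this is harmless, because nefness is tested on curves, and a curve in $X$ either lies in a fiber or reduces to the family over the normalization of its image curve. The bigness theorem for maximal variation, however, genuinely needs a projective base.

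\textbf{The fix.} Do what the paper does: by Chow's lemma, normalization, and resolution, choose a proper birational $h\colon\widetilde S\to S$ with $\widetilde S$ smooth and projective. Corollary~\ref{cor:base-change-degree} with $e=1$ preserves the degree. Since $k(\widetilde S)=k(S)$, both maximal variation and the lc generic fiber survive, as you observed for $S^\nu$.

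\textbf{An unnecessary detour.} Passing to the normalization of $X$ via Proposition~\ref{prop:normalization-additivity} buys nothing here. The paper applies nefness and bigness directly to $K_{\widetilde X/\widetilde S}+\widetilde D$ on the whole total space. As you note yourself, in the lc case the normalization is generically an isomorphism anyway and contributes a single summand. It is harmless, but you can drop it, together with the closing remark about $\kappa_1^r$.
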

\begin{proof}
  By Chow's lemma, normalization, and resolution of singularities over
  $\bC$, choose a proper birational morphism
  $h\colon\widetilde S\longrightarrow S$
  with $\widetilde S$ smooth and projective. Let
  $(\widetilde X,\widetilde D)\to\widetilde S$ be the base-changed
  family. Corollary~\ref{cor:base-change-degree} gives
  \begin{displaymath}
    \deg\kappa_r|_{\widetilde S}=\deg\kappa_r|_S.
  \end{displaymath}
  By \cite[Theorem~2.13 and the proof of Corollary~2.14]
  {patakfalvi2017ampleness-cm}, the divisor
  $K_{\widetilde X/\widetilde S}+\widetilde D$
  is nef. Hence its top self-intersection on the proper pure-dimensional
  scheme $\widetilde X$ is nonnegative, and
  \begin{displaymath}
    \deg\kappa_r|_S
      =\bigl(K_{\widetilde X/\widetilde S}+\widetilde D\bigr)^{n+r}
      \ge0.
  \end{displaymath}
  If the generic fiber is log canonical and the variation is maximal,
  then the same divisor is big and nef by
  \cite[Proposition~2.15]{patakfalvi2017ampleness-cm}. Its top
  self-intersection is therefore strictly positive.
\end{proof}

For the next statements, let $f\colon(X,D)\to S$ be a family over
an integral base. Let $p\colon S^\nu\longrightarrow S$ be the
normalization, put
$(X',D'):=(X,D)\times_S S^\nu,$
and write
\begin{displaymath}
  \nu\colon
  \bigsqcup_a(X_a,D_a+\Gamma_a)
  \longrightarrow (X',D')
\end{displaymath}
for the normalization of the pulled-back family. By
\cite[Lemma~3.2]{patakfalvi2017ampleness-cm}, the morphisms
\begin{displaymath}
  f_a\colon(X_a,D_a+\Gamma_a)\longrightarrow S^\nu
\end{displaymath}
are KSBA-stable families; they will be called the
\emph{normalized component families} of~$f$.

\begin{proposition}\label{prop:kappa_r=0}
  Let $f\colon(X,D)\to S$ be a family over a proper integral base of
  dimension~$r$. Then $\deg\kappa_r|_S>0$ if and only if at least one
  normalized component family has maximal variation, that is,
  $\var f_a=r$ for some~$a$.
\end{proposition}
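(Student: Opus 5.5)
We reduce to the normalized component families and then apply Proposition~\ref{prop:top-kappa-positivity} to each of them. The normalization $p\colon S^\nu\to S$ is finite and birational, with $S^\nu$ proper, integral and of dimension~$r$. So Corollary~\ref{cor:base-change-degree}, with $e=1$, gives
\begin{displaymath}
  \deg\kappa_r|_S=\deg\kappa_{r,S^\nu}(X',D')\cap[S^\nu].
\end{displaymath}
The base $S^\nu$ is integral and normal, so Proposition~\ref{prop:normalization-additivity} applies to $(X',D')\to S^\nu$. It gives
\begin{displaymath}
  \deg\kappa_r|_S=\sum_a\deg\kappa_{r,S^\nu}(X_a,D_a+\Gamma_a)\cap[S^\nu].
\end{displaymath}
Each $f_a$ is a KSBA-stable family over the proper integral base $S^\nu$ of dimension~$r$. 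By the first part of Proposition~\ref{prop:top-kappa-positivity}, every summand is $\ge0$. Hence $\deg\kappa_r|_S>0$ if and only if some summand is strictly positive.

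If $\var f_a<r$ for a given $a$, then Theorem~\ref{thm:vanishing} gives $\kappa_{r,S^\nu}(X_a,D_a+\Gamma_a)=0$, so that summand is zero. This proves the implication ``$\Rightarrow$''.

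For ``$\Leftarrow$'', suppose $\var f_a=r$, so that $f_a$ has maximal variation over $S^\nu$. To use the strict positivity in Proposition~\ref{prop:top-kappa-positivity}, we must check that the generic fiber of $f_a$ is log canonical. Since $X_a$ is normal, the generic fiber $(X_{a,\eta},D_{a,\eta}+\Gamma_{a,\eta})$ is a normal slc pair over the function field. Normal slc pairs are lc, so this is the step where the hypothesis is verified. The one point to confirm is that the generic fiber of a normal total space over a normal base is normal. This holds because localization preserves normality: the generic fiber is a localization of $X_a$. It is also geometrically normal, since we are in characteristic zero.

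We then apply the second part of Proposition~\ref{prop:top-kappa-positivity} to $f_a$, which gives a strictly positive summand. This summand may come from a component that does not dominate all of $X'$, but each $f_a$ is flat of relative dimension $n$ over the whole of $S^\nu$, so the proposition applies to it as stated. The main obstacle is exactly this log canonicity check for the generic fiber. The rest is bookkeeping of the two earlier results, together with the identification of ``maximal variation'' with $\var f_a=r=\dim S^\nu$.
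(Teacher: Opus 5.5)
Your proposal is correct and follows the paper's proof. You pass to $S^\nu$ via Corollary~\ref{cor:base-change-degree} with $e=1$, split the degree by normalization additivity, kill the summands with $\var f_a<r$ by Theorem~\ref{thm:vanishing}, and use both parts of Proposition~\ref{prop:top-kappa-positivity}. You also justify a step the paper only asserts, namely that the generic fiber of a maximal-variation $f_a$ is log canonical: normality of $X_a$ passes to the generic fiber by localization, and a normal slc pair is lc.
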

\begin{proof}
  The normalization map $p\colon S^\nu\to S$ is finite birational of
  degree~$1$. Hence Corollary~\ref{cor:base-change-degree} gives
  \begin{displaymath}
    \deg\kappa_r|_{S^\nu}=\deg\kappa_r|_S.
  \end{displaymath}
  Proposition~\ref{prop:normalization-additivity}, applied over the
  normal base $S^\nu$, gives
  \begin{displaymath}
    \deg\kappa_r|_S
      =\sum_a\deg\kappa_r(X_a,D_a+\Gamma_a)|_{S^\nu}.
  \end{displaymath}
  If every normalized component family has variation smaller than~$r$,
  each summand is zero by Theorem~\ref{thm:vanishing}. If one has maximal
  variation, its generic fiber is log canonical, so its summand is
  positive by Proposition~\ref{prop:top-kappa-positivity}; every other summand is
  nonnegative by the same proposition.
\end{proof}

For $r\ge2$, the total family can have variation~$r$ even when every
normalized component family has smaller variation, because distinct
components may vary independently. This motivates the following invariant.

\begin{definition}
  The \emph{normalized variation} of $f$ is
  \begin{displaymath}
    \var^\nu f:=\max_a\{\var f_a\},
  \end{displaymath}
  where the $f_a$ are the normalized component families over
  $S^\nu$.
\end{definition}

\begin{definition}
  Let $S$ be a projective integral scheme and let
  $c\in A^r(S)_\bQ$. We say that $c$ is \emph{numerically trivial},
  and write $c\equiv 0$, if
  \begin{displaymath}
    \deg\bigl(c\cap[Z]\bigr)=0
  \end{displaymath}
  for every integral closed subscheme $Z\subset S$ of dimension~$r$.
  Equivalently, numerical triviality may be tested on all
  $r$-dimensional cycles.
\end{definition}

\begin{theorem}[Nonnegativity and normalized variation]
  \label{thm:numerical-positivity}
  \leavevmode\par
  Let $f\colon(X,D)\to S$ be a family. Assume that $S$ is a
  projective integral scheme of dimension~$d$, and fix
  $0\leq r\leq d$.
  \begin{enumerate}
    \item For every integral closed subscheme $Z\subset S$ of
      dimension~$r$,
      \begin{displaymath}
        \deg\bigl(\kappa_{r,S}\cap[Z]\bigr)\geq0.
      \end{displaymath}
      The inequality is strict if and only if at least one normalized
      component family of the restricted family over $Z$ has
      variation~$r$.
    \item The class $\kappa_{r,S}$ is nonnegative on every
      effective $r$-cycle, and
      \begin{displaymath}
        \kappa_{r,S}\equiv 0
        \quad\Longleftrightarrow\quad
        r>\var^\nu f.
      \end{displaymath}
    \item For every ample Cartier divisor $H$ on $S$,
      \begin{displaymath}
        \deg\bigl(
          \kappa_{r,S}\cdot H^{d-r}\cap[S]
        \bigr)>0
        \quad\Longleftrightarrow\quad
        r\leq\var^\nu f.
      \end{displaymath}
    \item If $S$ is normal, then
      \begin{displaymath}
        \kappa_{r,S}=0\quad\text{in }A^r(S)_\bQ
        \qquad\text{for }r>\var^\nu f.
      \end{displaymath}
  \end{enumerate}
\end{theorem}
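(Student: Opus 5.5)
Part (1) reduces to the previous results. Given an integral closed subscheme $\iota\colon Z\hookrightarrow S$ of dimension~$r$, functoriality (Proposition~\ref{prop:functoriality}) gives
\begin{displaymath}
  \deg\bigl(\kappa_{r,S}\cap[Z]\bigr)=\deg\bigl(\iota_*(\kappa_{r,Z}\cap[Z])\bigr)=\deg\kappa_r|_Z .
\end{displaymath}
So nonnegativity follows from Proposition~\ref{prop:top-kappa-positivity} applied to the restricted family over the proper integral base $Z$. The criterion for strict positivity is exactly Proposition~\ref{prop:kappa_r=0} applied to $f_Z$.

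For (2), nonnegativity on effective cycles follows from (1) by linearity. For the equivalence I need to compare the normalized component families of $f_Z$ with those of $f$. I plan to prove the following claim: $\var^\nu f\ge r$ if and only if there is an $r$-dimensional integral $Z\subset S$ such that some normalized component family of $f_Z$ has variation $r$.
\begin{itemize}
  \item[($\Leftarrow$)] Lift $Z$ to a subvariety $Z'$ of $S^\nu$ via the finite surjection $p$. Over a general point of $Z'$ the normal fibers $X_{a,s}$ are precisely the normalizations of the components of $X_s$, with $\Gamma$ restricting to the conductor. Hence a normalized component of $f_Z$ is, generically over $Z$, (a finite cover of) a component of the normalization of some $(X_a,D_a+\Gamma_a)|_{Z'}$. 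Its moduli map therefore factors, up to finite data, through the moduli of the $f_a$-fibers, and so $\var f_a\ge r$.
  \item[($\Rightarrow$)] If $\var f_a=m\ge r$, take $Z'\subset S^\nu$ to be a general complete intersection of dimension $r$ on which the moduli map of $f_a$ stays generically finite onto its image, and let $Z=p(Z')$. A normalized component of $f_Z$ then dominates the relevant component of $f_a|_{Z'}$ with variation $r$.
\end{itemize}
I expect this compatibility of normalized components with restriction to be the main obstacle. The issue is that restricting to $Z$ can cause components to split further, and the conductor divisor has to be tracked carefully. I would argue generically over $Z$, using that normalization commutes with restriction over a dense open set of a general subvariety, where fibers are demi-normal and the relative normalization agrees fiberwise. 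Variation is insensitive to the proper closed subsets discarded in this process. Once the claim is established, (1) gives $\kappa_{r,S}\not\equiv0 \iff r\le\var^\nu f$.

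For (3), write $H^{d-r}\cap[S]$, after replacing $H$ by a multiple, as an effective cycle $\sum n_i[Z_i]$ with $n_i>0$. By (2) the degree is $\ge0$, and it is $0$ whenever $r>\var^\nu f$. Conversely, if $r\le\var^\nu f$, I take $Z$ to be a general complete intersection of $d-r$ very ample members of $|kH|$. This is precisely the construction used in ($\Rightarrow$) above, so the degree on that component is positive. Hence the degree is positive for $kH$, and dividing by $k^{d-r}$ gives positivity for $H$.

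For (4), assume $S$ is normal, so that $S^\nu=S$. Proposition~\ref{prop:normalization-additivity} gives $\kappa_{r,S}=\sum_a\kappa_{r,S}(X_a,D_a+\Gamma_a)$. Each summand vanishes in $A^r(S)_\bQ$ by Theorem~\ref{thm:vanishing}, since $r>\var^\nu f\ge\var f_a$.
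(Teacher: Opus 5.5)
Your arguments for (1) and (4) are the paper's. For (2) and (3) you take a genuinely different route. You reduce everything to (1) through a comparison between the normalized component families of $f_Z$ and the families $f_a$ over $S^\nu$; the paper never makes this comparison.

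\emph{Case $r>\var^\nu f$.} The paper observes that Proposition~\ref{prop:normalization-additivity} over the normal base $S^\nu$, together with Theorem~\ref{thm:vanishing}, gives $p^*\kappa_{r,S}=0$ in $A^r(S^\nu)_\bQ$. For an arbitrary integral $Z$, it takes a component $\widetilde Z$ of $S^\nu\times_S Z$ dominating $Z$ and notes that $\kappa_{r,\widetilde Z}=0$. Corollary~\ref{cor:base-change-degree} then yields $e\,\kappa_{r,Z}\cap[Z]=0$.

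\emph{Case $r\le\var^\nu f$.} The paper pulls the additivity identity back to a general $Z'\subset S^\nu$. It then applies Proposition~\ref{prop:top-kappa-positivity} summand by summand to the KSBA families $f_b|_{Z'}$. All summands are nonnegative, and the $a$-th is positive because the generic point of $Z'$ lies in the normal-fiber locus. So the normalized components of $f_Z$ never need to be identified. Part (3) is the same argument, with $Z'$ cut out by members of $|mH^\nu|$ and the projection formula applied via $p_*[S^\nu]=[S]$.

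The paper's route is shorter, purely operational, and gives the stronger vanishing $p^*\kappa_{r,S}=0$. Yours, once completed, makes explicit the geometric monotonicity $\var^\nu(f_Z)\le\var^\nu f$ underlying the statement.

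The price is that your ($\Leftarrow$) direction carries all the difficulty, and its first sentence is not correct as stated. Since $Z$ is arbitrary, $Z'$ may lie entirely in the locus where the fibers of $f_a$ are non-normal. In that case the fibers $X_{a,s}$ over points of $Z'$ are not the normalizations of components of $X_s$. What the argument actually needs is the following.
\begin{enumerate}
\item The map $\bigsqcup_a X_a\to X'$ is finite and birational on every fiber. It is an isomorphism over the fiberwise dense relative smooth locus, as in the proof of Proposition~\ref{prop:normalization-additivity}, and every fiber component has dimension $n$. Hence, generically, the normalized components of $f_{Z'}$ are the normalized components of the families $f_a|_{Z'}$.
\item A normalized component family has variation at most that of the family it comes from. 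Isomorphic slc fibers have isomorphic normalizations, so the components are determined up to finite ambiguity. Therefore, over a dense open subset of the base, each fiber of the moduli map of $f_a|_{Z'}$ has finite image under the moduli map of the component. A dimension count on the graph of the two maps gives the inequality.
\end{enumerate}
With these two facts written out, your proof closes. Similarly, in (3), choosing $Z$ general in $S$ rather than $Z'$ in $S^\nu$ means $p^{-1}(Z)$ is cut out by the base-point-free, but not necessarily very ample, system $p^*|kH|$. The needed general-position properties then require Bertini for a finite morphism; this is harmless. For (2), however, the paper's operational argument makes both facts unnecessary.
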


\begin{proof}
  For \textup{(1)}, functoriality identifies
  $\deg(\kappa_{r,S}\cap[Z])$ with the top kappa degree of the
  restricted family over~$Z$. Proposition~\ref{prop:kappa_r=0} gives
  both nonnegativity and the asserted strict-positivity criterion.

  This already proves the nonnegativity assertion in \textup{(2)}. Put
  $w=\var^\nu f$. Suppose first that $r>w$. On the normalization of
  the base, normalization additivity and Theorem~\ref{thm:vanishing} give
  \begin{displaymath}
    p^*\kappa_{r,S}
      =\sum_a\kappa_{r,S^\nu}(X_a,D_a+\Gamma_a)=0
    \qquad\text{in }A^r(S^\nu)_\bQ.
  \end{displaymath}
  Let $Z\subset S$ be integral of dimension~$r$, and choose an
  irreducible component $\widetilde Z$ of
  $S^\nu\times_S Z$ that dominates~$Z$. The induced morphism
  $h\colon\widetilde Z\to Z$ is finite and generically finite, say of
  degree~$e$. The class $\kappa_{r,\widetilde Z}$ is the pullback of
  $p^*\kappa_{r,S}$, hence is zero. Corollary~\ref{cor:base-change-degree}
  then gives
  \begin{displaymath}
    0=h_*\bigl(\kappa_{r,\widetilde Z}\cap[\widetilde Z]\bigr)
      =e\,\kappa_{r,Z}\cap[Z].
  \end{displaymath}
  Taking degrees proves $\kappa_{r,S}\equiv 0$.

  Conversely, suppose that $r\leq w$. For $r=0$, the class
  $\kappa_{0,S}=v$ has positive degree on every closed point. Assume
  $r\geq1$, and choose an index~$a$ with $\var f_a\geq r$.
  Choose a sufficiently general complete intersection
  $Z'\subset S^\nu$ of dimension~$r$ so that the image of $Z'$ under
  the moduli map of $f_a$ has dimension
  \begin{displaymath}
    \min\{r,\var f_a\}=r,
  \end{displaymath}
  and so that the generic point of $Z'$ lies in the common dense open
  locus over which the fibers of the normalized component families are
  normal, hence log canonical. After pulling the formula for additivity
  under normalization back to $Z'$,
  Proposition~\ref{prop:top-kappa-positivity} shows
  that the $a$-th summand has positive top kappa degree and all other
  summands have nonnegative degree. Thus
  \begin{displaymath}
    \deg\kappa_r(X'_{Z'},D'_{Z'})|_{Z'}>0.
  \end{displaymath}
  Let $Z=p(Z')$. Since $Z'\to Z$ is finite and generically finite,
  Corollary~\ref{cor:base-change-degree} implies
  $\deg(\kappa_{r,S}\cap[Z])>0$. This proves the converse in
  \textup{(2)}.

  For \textup{(3)}, put $H^\nu=p^*H$. If $r>w$, the vanishing of
  $p^*\kappa_{r,S}$ above and the projection formula give the asserted
  zero intersection. Suppose that $r\leq w$. The case $r=0$ follows
  from $\kappa_{0,S}=v>0$. For $r\geq1$, choose $a$ with
  $\var f_a\geq r$, choose $m\gg0$, and take a very general complete
  intersection
  \begin{displaymath}
    Z'=D_1\cap\dotsb\cap D_{d-r}\subset S^\nu,
    \qquad D_j\in|mH^\nu|,
  \end{displaymath}
  with $Z'=S^\nu$ when $r=d$. As above, the top kappa degree of the
  pulled-back total family on $Z'$ is positive. The projection formula
  gives
  \begin{displaymath}
    \deg\kappa_r|_{Z'}
      =m^{d-r}\deg\bigl(
        \kappa_{r,S}\cdot H^{d-r}\cap[S]
      \bigr),
  \end{displaymath}
  because $p_*[S^\nu]=[S]$. This proves \textup{(3)}.

  Finally, if $S$ is normal and $r>\var^\nu f$, normalization
  additivity expresses $\kappa_{r,S}$ as the sum of the classes of the
  normalized component families, and every summand vanishes by
  Theorem~\ref{thm:vanishing}. This proves \textup{(4)}.
\end{proof}

\begin{corollary}[The first kappa class detects total variation]
  Let $f\colon(X,D)\to S$ be a family over a projective integral base,
  and write $\lambda^{\log}_{\mathrm{CM},S}$ for its logarithmic CM
  rational line bundle. Then $\lambda^{\log}_{\mathrm{CM},S}$ is
  semiample,
  \begin{displaymath}
    c_1\bigl(\lambda^{\log}_{\mathrm{CM},S}\bigr)=\kappa_{1,S},
  \end{displaymath}
  and
  \begin{displaymath}
    \operatorname{Iitaka-dim}\bigl(\lambda^{\log}_{\mathrm{CM},S}\bigr)
      =\nu\bigl(\lambda^{\log}_{\mathrm{CM},S}\bigr)
      =\var f.
  \end{displaymath}
\end{corollary}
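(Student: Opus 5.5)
The argument has three parts: identify $\kappa_{1,S}$ with $c_1$ of the CM bundle, pull back a power of it from the coarse space (which gives semiampleness), and then read off the Iitaka and numerical dimensions as the dimension of the image of the moduli map.

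\emph{Step 1: the Chern class identity.} By the normalization of \cite{wang2014nonexistence-asymptotic} recalled after the computation of $\kappa_0$, one has $\kappa_1=c_1(\lambda_{\CM}^{\rm log})$ on $\cM$. The logarithmic CM line bundle is defined functorially, as a Deligne pairing or leading Knudsen--Mumford coefficient. Hence $\lambda^{\log}_{\mathrm{CM},S}$ is the pullback of $\lambda^{\log}_{\CM}$ along the classifying map $\widetilde\mu\colon S\to\cM$. Proposition~\ref{prop:functoriality} then gives $c_1(\lambda^{\log}_{\mathrm{CM},S})=\widetilde\mu^*\kappa_1=\kappa_{1,S}$.

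\emph{Step 2: semiampleness.} Some positive power of $\lambda_{\CM}^{\rm log}$ descends to a line bundle $\bm\lambda$ on the coarse space $\bm M$. By \cite{patakfalvi2017ampleness-cm}, $\bm\lambda$ is ample on the projective scheme $\bm M$. With $\mu\colon S\to\bm M$ the coarse moduli map, a suitable multiple of $\lambda^{\log}_{\mathrm{CM},S}$ equals $\mu^*\bm\lambda$. This is compatible with Proposition~\ref{prop:coarse-descent} at the level of $c_1$, and it can be checked at the level of line bundles on the global quotient $[H/G]$ after raising to a power that kills the stabilizer characters. The pullback of an ample bundle is semiample, so $\lambda^{\log}_{\mathrm{CM},S}$ is semiample.

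\emph{Step 3: the dimensions.} Let $Z=\overline{\mu(S)}$, which has dimension $\var f$, and factor $\mu$ as $S\xrightarrow{\mu_Z}Z\hookrightarrow\bm M$. Then $\bm\lambda|_Z$ is ample. For a line bundle of the form $\mu_Z^*A$ with $A$ ample and $\mu_Z$ surjective onto a projective $Z$, the section ring of $\mu_Z^*A$ has the same growth as that of $A$. This holds after passing to the Stein factorization $S\to S'\to Z$, where $S'\to Z$ is finite and pullbacks of ample bundles stay ample; the Stein factorization exists because $S$ is projective and $\mu_Z$ is proper. Therefore the Iitaka dimension equals $\dim Z=\var f$.

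For the numerical dimension, $\kappa_{1,S}^{\var f+1}=0$ by Theorem~\ref{thm:vanishing}, so $\nu\le\var f$. For the reverse inequality, choose an integral subvariety $W\subset S$ of dimension $\var f$ mapping generically finitely onto $Z$; a general complete intersection of very ample divisors works. By Corollary~\ref{cor:base-change-degree} and the projection formula, $\deg(\kappa_{1,S}^{\var f}\cap[W])$ is a positive multiple of $\deg(c_1(\bm\lambda)^{\dim Z}\cap[Z])$, which is positive by ampleness. Hence $\nu=\var f$, and for a semiample bundle this agrees with the Iitaka dimension in any case.

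\emph{Main obstacle.} The delicate point is Step 2: passing from the equality of rational Chow classes $\kappa_{1,S}=\mu^*\bar\kappa_1$ to an actual isomorphism of line bundles $(\lambda^{\log}_{\mathrm{CM},S})^{\otimes m}\cong\mu^*\bm\lambda$. Semiampleness and Iitaka dimension depend on the line bundle itself, not only on its numerical class. I would handle this by invoking descent of a power of the CM bundle along $\cM\to\bm M$ for a Deligne--Mumford stack with finite stabilizers, which is the setting of Koll\'ar's and Patakfalvi--Xu's ampleness arguments, rather than working purely in operational Chow.
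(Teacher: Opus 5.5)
Your proposal is correct and follows essentially the paper's route: you identify $c_1(\lambda^{\log}_{\mathrm{CM},S})$ with $\kappa_{1,S}$ by functoriality, obtain semiampleness by pulling back the ample descended CM bundle from $\bm M$ (your point about needing an honest line-bundle isomorphism rather than a Chow identity is exactly what descending a tensor power provides), and compute the Iitaka dimension through the Stein factorization of $S\to Z$. Your numerical-dimension argument, with Theorem~\ref{thm:vanishing} for the upper bound and a generically finite $W\to Z$ plus the projection formula for the lower bound, simply spells out the paper's one-line remark that the pullback of an ample class has numerical dimension equal to the dimension of the image.
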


\begin{proof}
  Let $\mu\colon S\to\bm M$ be the coarse moduli map and let
  $Z\subset\bm M$ be the closure of its image. The logarithmic CM
  rational line bundle descends to an ample rational line bundle on
  $\bm M$, and its pullback to $S$ is
  $\lambda^{\log}_{\mathrm{CM},S}$, whose first Chern class is
  $\kappa_{1,S}$. A sufficiently divisible positive multiple of the
  ample class on $Z$ is very ample. Pulling its complete linear system
  back to $S$ gives a basepoint-free subsystem whose image has dimension
  $\dim Z$. Conversely, the morphism defined by any sufficiently
  divisible multiple of $\lambda^{\log}_{\mathrm{CM},S}$ factors
  through the Stein factorization of $S\to Z$, so its image has
  dimension at most $\dim Z$. Hence its Iitaka dimension is
  $\dim Z=\var f$. Since the pullback of an ample class has numerical
  dimension equal to the dimension of the image, its numerical dimension
  is the same.
\end{proof}

\begin{corollary}[Kappa classes detect the two variations]
  Let $f\colon(X,D)\to S$ be a family over a projective integral
  base, and put
  \begin{displaymath}
    v_f:=\var f,
    \qquad
    w_f:=\var^\nu f.
  \end{displaymath}
  Let $\kappa(f)\subset A^*(S)_\bQ$ be the graded subalgebra
  generated by all kappa classes, and let
  \begin{displaymath}
    I_f:=(\kappa_{1,S},\kappa_{2,S},\dotsc)
      \subset\kappa(f)
  \end{displaymath}
  be its positive-degree ideal. Then
  \begin{displaymath}
    I_f^{\,v_f+1}=0.
  \end{displaymath}
  If $v_f>0$, then $\kappa_{1,S}^{v_f}\ne0$, so the nilpotence
  index of $I_f$ is exactly $v_f+1$. If $v_f=0$, then $I_f=0$.
  Moreover,
  \begin{displaymath}
    w_f
      =\max\bigl\{
        r\geq0:\kappa_{r,S}\not\equiv0
      \bigr\}.
  \end{displaymath}
\end{corollary}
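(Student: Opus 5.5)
The plan is to combine the vanishing theorem, the previous corollary on $\kappa_1$, and Theorem~\ref{thm:numerical-positivity}; there are three assertions to check.

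\emph{Nilpotence.} The ideal $I_f^{\,v_f+1}$ is spanned, as a $\kappa(f)$-module and hence as a $\bQ$-vector space, by monomials $\kappa_{r_1,S}\cdots\kappa_{r_m,S}$ with $m\ge v_f+1$ factors of positive index, each multiplied by further kappa monomials. Such a product has total codimension at least $m\ge v_f+1>\var f$. Theorem~\ref{thm:vanishing} then makes every such product zero in $A^*(S)_\bQ$, so $I_f^{\,v_f+1}=0$. If $v_f=0$, the same theorem gives $\kappa_{r,S}=0$ for all $r\ge1$, so $I_f=0$.

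\emph{Sharpness when $v_f>0$.} I need $\kappa_{1,S}^{v_f}\ne0$. By the previous corollary, $\kappa_{1,S}=c_1(\lambda^{\log}_{\mathrm{CM},S})$, and this rational line bundle is the pullback $\mu^*A$ of an ample $\bQ$-line bundle $A$ on $\bm M$ under the coarse moduli map $\mu$, with image $Z$ of dimension $v_f$. I would choose an integral subvariety $W\subset S$ of dimension $v_f$ mapping generically finitely onto $Z$, for instance a general complete intersection of very ample divisors on the projective $S$. The projection formula then gives
\begin{displaymath}
  \deg\bigl(\kappa_{1,S}^{v_f}\cap[W]\bigr)=\deg(W/Z)\cdot\deg\bigl(c_1(A|_Z)^{v_f}\cap[Z]\bigr)>0,
\end{displaymath}
so $\kappa_{1,S}^{v_f}\ne0$. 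Alternatively, this follows directly from the numerical dimension statement $\nu(\lambda^{\log}_{\mathrm{CM},S})=v_f$ in the previous corollary. Since $\kappa_{1,S}^{v_f}\in I_f^{\,v_f}$, the nilpotence index is exactly $v_f+1$.

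\emph{Normalized variation.} Theorem~\ref{thm:numerical-positivity}(2) says that $\kappa_{r,S}\equiv0$ if and only if $r>w_f$, for $0\le r\le d=\dim S$. For $r>d$, $A^r(S)_\bQ$ has no $r$-dimensional test cycles, so $\kappa_{r,S}\equiv0$ trivially. This is consistent, because $w_f\le\var f\le d$; the bound $\var f_a\le\dim S^\nu=d$ holds since each $f_a$ lives over $S^\nu$. Hence $\kappa_{r,S}\not\equiv0$ exactly for $0\le r\le w_f$, and the maximum equals $w_f$. Here $\kappa_{0}=v\ne0$ guarantees the set is nonempty.

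The only real subtlety is the sharpness step: one must choose $W$ so that $\mu|_W$ is generically finite onto $Z$. I would handle this by taking $W$ to be a general fiber of a general linear projection cut down inside $S$, i.e.\ a general complete intersection of dimension $v_f$. This works because the general fiber of $S\to Z$ has dimension $d-v_f$ and meets such a complete intersection in finitely many points.
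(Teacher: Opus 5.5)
Your proposal is correct and follows essentially the same route as the paper. Nilpotence comes from Theorem~\ref{thm:vanishing}, $\kappa_{1,S}^{v_f}\ne0$ comes from the preceding CM corollary, and the formula for $w_f$ is Theorem~\ref{thm:numerical-positivity}(2). Your explicit test-subvariety computation and your treatment of $r>\dim S$ (vacuous numerical triviality together with $w_f\le\dim S$) only spell out points the paper's three-line proof leaves implicit.
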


\begin{proof}
  Every monomial in $v_f+1$ positive-degree kappa classes has total
  codimension greater than $v_f$, so its vanishing follows from
  Theorem~\ref{thm:vanishing}. If $v_f>0$, the preceding corollary
  gives numerical dimension $v_f$ for $\kappa_{1,S}$; hence
  $\kappa_{1,S}^{v_f}$ is not numerically trivial and in particular is
  nonzero. The final equality is Theorem~\ref{thm:numerical-positivity}
  \textup{(2)}.
\end{proof}

\begin{remark}[Khovanskii--Teissier inequalities]
  Keep the notation above and, for an ample Cartier divisor $H$ on
  $S$, set
  \begin{displaymath}
    s_k(H):=
    \deg\bigl(H^{d-k}\cdot\kappa_{k,S}\cap[S]\bigr),
    \qquad 0\le k\le d.
  \end{displaymath}
  Put $H^\nu=p^*H$ and
  \begin{displaymath}
    L_a:=K_{X_a/S^\nu}+D_a+\Gamma_a.
  \end{displaymath}
  By the projection formula and normalization additivity,
  $s_k(H)=\sum_a s_{a,k}(H)$, where
  \begin{displaymath}
    s_{a,k}(H):=
    \deg_{X_a}\bigl(
      (f_a^*H^\nu)^{d-k}L_a^{n+k}
    \bigr).
  \end{displaymath}
  The divisors $f_a^*H^\nu$ and $L_a$ are nef; for $L_a$, this is the
  nefness result used in Proposition~\ref{prop:top-kappa-positivity}.
  After clearing denominators, the Khovanskii--Teissier inequalities,
  originating in
  \cite{khovanskii1979geometry-convex-polyhedra,
    teissier1979hodge-index} and used here in the standard form
  \cite[Example~1.6.4]{lazarsfeld2004positivityI}, give
  \begin{displaymath}
    s_{a,k}(H)^2
      \ge s_{a,k-1}(H)s_{a,k+1}(H)
    \qquad\text{for }1\le k\le d-1,
  \end{displaymath}
  and hence
  \begin{displaymath}
    s_{a,i}(H)^j
      \ge s_{a,0}(H)^{j-i}s_{a,j}(H)^i
    \qquad\text{for }0<i<j\le d.
  \end{displaymath}
  Here
  \begin{displaymath}
    s_{a,0}(H)=v_a\,(H^\nu)^d>0,
  \end{displaymath}
  where $v_a$ is the volume of a fiber of~$f_a$. All the numbers
  $s_{a,k}(H)$ are nonnegative. Therefore, if $s_i(H)=0$, then
  $s_{a,i}(H)=0$ for every~$a$, and the second inequality forces
  $s_{a,j}(H)=0$ for all $j>i$. It follows that $s_j(H)=0$ for
  every $j>i$.

  We do not assert that the total sequence $s_k(H)=\sum_a s_{a,k}(H)$
  is itself log-concave: a sum of log-concave sequences need not be
  log-concave. The componentwise inequalities are sufficient for the
  stated propagation of numerical triviality. If the normalized
  pulled-back family has a single component, however, the total sequence
  is log-concave. In that case its positive terms form an initial
  interval, and the ratios $s_k(H)/s_{k-1}(H)$ are nonincreasing wherever
  they are defined.
\end{remark}

\subsection{Stacky operations on coarse moduli spaces}
\label{sec:stacky-coarse}

Every line bundle on a separated DM stack has a positive tensor
power that descends to its coarse moduli space
\cite[Lemma~3.2]{kresch2004on-coverings}; this gives rational
surjectivity of the ordinary pullback on Picard groups. To distinguish
these pullbacks from the Chow-theoretic comparison maps recalled below,
write them as
\begin{displaymath}
  \pi_{\Pic}^*\colon\Pic(\bm M)_\bQ\longrightarrow\Pic(\cM)_\bQ,
  \qquad
  (\pi')_{\Pic}^*\colon\Pic(\bm X)_\bQ\longrightarrow\Pic(\cX)_\bQ.
\end{displaymath}
For injectivity, note that in characteristic zero the stacks are tame and that
\begin{displaymath}
  \pi_*\cO_{\cM}=\cO_{\bm M},\qquad
  \pi'_*\cO_{\cX}=\cO_{\bm X}.
\end{displaymath}
If $L\in\Pic(\bm M)$ and
$\pi_{\Pic}^*L\simeq\cO_{\cM}$, the projection formula gives
\begin{displaymath}
  \pi_*\pi_{\Pic}^*L
    \simeq L\otimes\pi_*\cO_{\cM}
    \simeq L.
\end{displaymath}
Thus
\begin{displaymath}
  L\simeq\pi_*\pi_{\Pic}^*L
    \simeq\pi_*\cO_{\cM}
    \simeq\cO_{\bm M},
\end{displaymath}
and the same argument applies to $\pi'$. Consequently,
\begin{displaymath}
  \pi_{\Pic}^*\colon\Pic(\bm M)_\bQ\isoto\Pic(\cM)_\bQ,
  \qquad
  (\pi')_{\Pic}^*\colon\Pic(\bm X)_\bQ\isoto\Pic(\cX)_\bQ.
\end{displaymath}

Recall from Section~\ref{sec:stack-to-coarse} that the coarse moduli
maps induce the Chow-theoretic comparison isomorphisms
\begin{align*}
  \pi_*&\colon A_*(\cM)_\bQ \isoto A_*(\bm M)_\bQ,
  &
  \pi'_*&\colon A_*(\cX)_\bQ \isoto A_*(\bm X)_\bQ,\\
  \pi^*&\colon A^*(\bm M)_\bQ \isoto A^*(\cM)_\bQ,
  &
  \pi'{}^*&\colon A^*(\bm X)_\bQ \isoto A^*(\cX)_\bQ.
\end{align*}
In the remainder of this subsection, the unadorned symbols
$\pi_*$, $\pi'_*$, $\pi^*$, and $\pi'{}^*$ always denote these
Chow comparison maps. The Picard pullbacks are denoted
$\pi_{\Pic}^*$ and $(\pi')_{\Pic}^*$ as above; after applying the first
Chern class, they are compatible with the degree-one operational
pullbacks $\pi^*$ and $\pi'{}^*$.

Since $\bm{f}$ need not be flat, neither an ordinary flat pullback in
Chow homology nor an ordinary flat Gysin pushforward in operational
Chow cohomology is generally available. We instead make the following
definition.
\begin{definition}
  The \emph{stacky pullback} and the \emph{stacky Gysin pushforward}
  associated with $\bm{f}$ are, respectively,
  \begin{align*}
    \stpull\colon A_k(\bm{M})_\bQ
      &\longrightarrow A_{k+n}(\bm{X})_\bQ,
    &
    \stpull(\alpha)
    &
    = \bigl(\pi'_*\! \circ \ff^* \circ \pi_*^{-1} \bigr)(\alpha),\\
    \stpush\colon A^p(\bm{X})_\bQ
      &\longrightarrow A^{p-n}(\bm{M})_\bQ,
    &
    \stpush(c)
      &= \bigl(\pi^*{}^{-1} \circ \ff_! \circ \pi'{}^*\bigr) (c).
  \end{align*} 
\end{definition}

Let
\begin{displaymath}
  \overline{\cL}
    :=\bigl((\pi')_{\Pic}^*\bigr)^{-1}[\cL]
      \in\Pic(\bm X)_\bQ,
  \qquad
  \bar\ell:=c_1(\overline{\cL})\in A^1(\bm X)_\bQ.
\end{displaymath}
By the compatibility of Picard and operational pullbacks,
$\pi'{}^*\bar\ell=\ell$. The transported operations recover the
descended kappa classes by the formula
\begin{equation}\label{eq:coarse-kappa-formula}
  \bar\kappa_r
    =\frac1{N^{n+r}}\,
      \stpush\bigl(\bar\ell^{\,n+r}\bigr)
    \qquad\text{in }A^r(\bm M)_\bQ.
\end{equation}
Equivalently, for every $\alpha\in A_*(\bm M)_\bQ$,
\begin{displaymath}
  \bar\kappa_r\cap\alpha
    =\frac1{N^{n+r}}\,
      \bm f_*\bigl(
        \bar\ell^{\,n+r}\cap\stpull\alpha
      \bigr).
\end{displaymath}
Indeed, pulling \eqref{eq:coarse-kappa-formula} back by $\pi$ gives
Definition~\ref{def:kappa}, and the second identity is the defining
bivariant action transported through the coarse-space comparison
isomorphisms.

For every morphism $S\to\cM$, define the corresponding operations for
the pulled-back family by the same transport through the coarse-space
comparison isomorphisms. With this base-change-indexed definition,
compatibility with every further base change $T\to S$ follows directly
from the bivariant compatibility of flat pullback and Gysin pushforward
for the Cartesian pullback of $\ff$. Thus the stacky operations commute
with base changes that arise over $\cM$; no Cartesian assertion about
diagram~\eqref{eq:coarse-moduli} is being made. 

\begin{lemma}
  Let $\cV$ be an integral substack of $\cM$, and write the flat
  pullback of its fundamental cycle as
  \begin{displaymath}
    \ff^*[\cV]=\sum_i[\cW_i],
  \end{displaymath}
  with integral $\cW_i$. Let $V\subset \bm{M}$ and $W_i\subset \bm{X}$
  be the corresponding reduced subvarieties. Then
  \begin{displaymath}
    \stpull [V] = \sum_i \frac{e_V}{e_{W_i}} [W_i],
  \end{displaymath}
  where $e_V$ is the order of the automorphism group of a generic
  point of $\cV$, and similarly for $e_{W_i}$.
\end{lemma}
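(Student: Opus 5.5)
By definition, $\stpull[V]=\pi'_*\bigl(\ff^*(\pi_*^{-1}[V])\bigr)$. The plan is to compute each of the three maps on fundamental cycles of integral substacks and then compose them. The basic input is Vistoli's description of proper pushforward along a coarse moduli map \cite[Section~6]{vistoli1989intersection-theory}. For an integral closed substack $\cV\subset\cM$ with reduced image $V\subset\bm M$, the induced map $\cV\to V$ is a coarse moduli map of the integral stack $\cV$. It is generically a gerbe banded by the generic stabilizer of order $e_V$, followed by a birational map on coarse spaces, so its degree is $1/e_V$. Hence
\begin{displaymath}
  \pi_*[\cV]=\frac1{e_V}[V].
\end{displaymath}
The same argument applied to $\pi'$ and each integral $\cW_i\subset\cX$ with image $W_i$ gives
\begin{displaymath}
  \pi'_*[\cW_i]=\frac1{e_{W_i}}[W_i].
\end{displaymath}
The step to check carefully is the normalization of degrees in Vistoli's theory: the degree of a proper representable-over-coarse morphism of integral DM stacks is defined so that $\Spec k\to B G$ has degree $|G|$, and hence $BG\to\Spec k$ has degree $1/|G|$. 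I would verify the displayed formula by restricting to a dense open of $V$ over which $\cV$ is a gerbe over an open of $V$ (after replacing $V$ by its normalization if necessary, which does not change the cycle class). The general statement then follows because pushforward of fundamental cycles only depends on generic behavior.

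With these formulas, $\pi_*^{-1}[V]=e_V[\cV]$, since $\pi_*$ is an isomorphism rationally. Flat pullback along the representable flat morphism $\ff$ gives
\begin{displaymath}
  \ff^*\bigl(e_V[\cV]\bigr)=e_V\sum_i[\cW_i],
\end{displaymath}
using the hypothesis on $\ff^*[\cV]$. Multiplicities $m_i$ would appear here in general. The statement writes the sum without them, so either they are implicitly allowed by repeating the $\cW_i$ in the list, or the fibers are reduced and the multiplicities are $1$ generically over $\cV$. The argument is linear and goes through in either case. Applying $\pi'_*$ termwise then yields
\begin{displaymath}
  \stpull[V]=\sum_i\frac{e_V}{e_{W_i}}[W_i].
\end{displaymath}

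The main obstacle is justifying the coarse-pushforward formula $\pi_*[\cV]=e_V^{-1}[V]$ for a non-reduced-stabilizer-free situation. Specifically, one has to make sure the generic stabilizer of $\cV$, rather than the stabilizer of the ambient $\cM$ at its generic point, is what enters, and that no extra birational degree appears between the coarse space of $\cV$ and $V$. The latter holds because in characteristic zero the coarse space of a closed substack maps to its image in $\bm M$ by a finite universal homeomorphism, which is birational onto the reduced image.
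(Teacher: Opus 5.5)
Your proposal is correct and follows the paper's proof: you write $\pi_*^{-1}[V]=e_V[\cV]$, pull back along the flat morphism $\ff$, and push forward along $\pi'$ using $\pi'_*[\cW_i]=e_{W_i}^{-1}[W_i]$. The paper simply cites these two coarse-pushforward identities where you sketch why they hold, and your remark on multiplicities matches the paper's note after the lemma that the coefficients in $\ff^*[\cV]$ equal $1$ because $\ff$ has reduced fibers.
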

\begin{proof}
  This is immediate from $\pi_*[\cV] = \frac1{e_V}[V]$ and
  $\pi'_*[\cW_i] = \frac1{e_{W_i}}[W_i]$.
\end{proof}

The coefficients of $[\cW_i]$ in $\ff^*[\cV]$ are all equal to~$1$
because the KSBA family~$\ff$ has reduced fibers. Since $\ff$ is
representable, the stabilizer of a generic point of $\cW_i$ injects
into the stabilizer of its image in $\cV$. Hence $e_{W_i}$ divides
$e_V$, and the coefficients ${e_V}/{e_{W_i}}$ in the preceding formula
are integers.

\subsection{Chambers and wall crossing}
\label{sec:wall-crossing}

We briefly record the framework of
\cite{ascher2023wall-crossing,meng2023mmp-for}.
Our main reference is
\cite[Theorems~1.1, 1.2, and 6.2]{meng2023mmp-for}. Let
\begin{displaymath}
  (X,\textstyle\sum_{s=1}^q a_sD_s)\longrightarrow B
\end{displaymath}
be a KSBA-stable family, where $B$ is normal and not necessarily
proper, and assume that the generic fiber is normal. A coefficient vector
$\ub=(b_1,\dotsc,b_q)$ is called \emph{admissible} if
$0<b_s\le a_s$ for every $s$ and the divisor
\begin{displaymath}
  K_{X/B} + \ub D := K_{X/B}+\sum_s b_sD_s
\end{displaymath}
is big over $B$. An admissible rational polytope is a rational polytope
all of whose points are admissible. Notice that the volume generally
varies with $\ub$; we therefore write $\mathcal K_{\ub}$ for the
corresponding KSBA moduli space rather than keeping a fixed symbol $v$.

For an admissible $\ub$, let $(Z_{\ub},\ub\Delta_{\ub})\to B$ be the
stable log canonical model, which exists by
\cite[Theorem~1.1]{meng2023mmp-for}. Let
$\Phi_{\ub}\colon B\to\mathcal K_{\ub}$ be its moduli map, and let
$\cM_{\ub}$ be the reduced closure of $\Phi_{\ub}(B)$. For an
admissible rational polytope $\cP$,
\cite[Theorem~1.2]{meng2023mmp-for} gives a finite rational polyhedral
decomposition
\begin{displaymath}
  \cP = \bigcup_{i=1}^p \cP_i
\end{displaymath}
with the following properties:
\begin{enumerate}
\item Inside an open chamber $\cP_i^0$ the underlying marked family
  \begin{displaymath}
    (\cX_\ub; \cD_{\ub,1}, \dotsc, \cD_{\ub,q})
  \end{displaymath}
  is independent of $\ub$ and the reduced closures $\cM_{\ub}$
  are canonically isomorphic. One can therefore denote $\cM_\ub$ by
  $\cM_i$ if $\ub\in \cP_i^0$.
\item Suppose that $\cP_j$ is a face of $\cP_i$. Then there exists a
  commutative diagram
  \begin{equation}\label{eq:wall-crossing}
    \begin{tikzcd}
      (\cX_i; \cD_{i,1}, \dotsc, \cD_{i,q})
        \arrow[r] \arrow[d, "\ff_i"']
      & (\cX_j; \cD_{j,1}, \dotsc, \cD_{j,q})
        \arrow[d, "\ff_j"]
      \\
      \cM_i \arrow[r, "\alpha_{ij}"]
      & \cM_j
    \end{tikzcd}
  \end{equation}
  with wall-crossing morphisms between the reduced closures $\cM_i$ and
  $\cM_j$ and between their universal families.
\end{enumerate}

\begin{remark}
  We follow the stack-theoretic conventions of
  \cite{ascher2023wall-crossing,meng2023mmp-for}: the moduli loci,
  their reduced or normalized closures, and the reduction and
  wall-crossing morphisms are understood as Deligne--Mumford stacks
  and morphisms of stacks, even when the term ``moduli space'' is
  used. This is explicit in \cite{ascher2023wall-crossing}. In
  \cite{meng2023mmp-for} this is implicit, but it is clear that the
  constructions apply to the moduli stacks.
\end{remark}

For $\ub=(b_1,\dotsc,b_q)\in\cP_i\cap\bQ^q$, write
\begin{displaymath}
  \ub\cD_i:=\sum_{s=1}^q b_s\cD_{i,s},
\end{displaymath}
and set
\begin{displaymath}
  L_i(\ub)
    :=\cO_{\cX_i}\bigl(K_{\cX_i/\cM_i}+\ub\cD_i\bigr)
      \in\Pic(\cX_i)_\bQ,
  \qquad
  \lambda_i(\ub):=c_1\bigl(L_i(\ub)\bigr).
\end{displaymath}
Define
\begin{displaymath}
  \kappa^{(i)}_{r,\cM_i}(\ub)
    :=(\ff_i)_!\bigl(\lambda_i(\ub)^{n+r}\bigr)
      \in A^r(\cM_i)_\bQ.
\end{displaymath}
For $\ub\in\cP_i^0$, this is the ordinary kappa class; on
$\cP_i\setminus\cP_i^0$ it denotes its polynomial extension on the
fixed chamber model.

\begin{proposition}[Chamberwise polynomiality]
  As the rational coefficient vector $\ub$ varies in the closed
  polytope $\cP_i$, the classes
  \begin{displaymath}
    \kappa^{(i)}_{r,\cM_i}(\ub)\in A^r(\cM_i)_\bQ
  \end{displaymath}
  depend polynomially on $\ub$, with total degree at most $n+r$.
\end{proposition}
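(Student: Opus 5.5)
The plan is to write $\lambda_i(\ub)$ as an affine-linear function of $\ub$ with fixed operational coefficients, expand the power multinomially, and use that $(\ff_i)_!$ is $\bQ$-linear.

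First, on the fixed chamber model $\ff_i\colon(\cX_i;\cD_{i,1},\dotsc,\cD_{i,q})\to\cM_i$, I would show that the $\bQ$-line bundle $L_i(\ub)$ decomposes as
\begin{displaymath}
  L_i(\ub)=L_i(\ub_0)\otimes\bigotimes_{s=1}^q
  \cO_{\cX_i}(\cD_{i,s})^{\otimes(b_s-b_{0,s})}
  \qquad\text{in }\Pic(\cX_i)_\bQ
\end{displaymath}
for a fixed rational base point $\ub_0\in\cP_i$. Here I must check that each $\cD_{i,s}$ is $\bQ$-Cartier on $\cX_i$. I expect this to be the main obstacle. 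I would try to obtain it from the constancy of the marked family on $\cP_i^0$. Indeed, $K_{\cX_i/\cM_i}+\ub\cD_i$ is $\bQ$-Cartier for every rational $\ub$ in the open chamber. Choosing $q+1$ affinely independent rational points $\ub^{(0)},\dotsc,\ub^{(q)}$ in $\cP_i^0$, differences of the corresponding $\bQ$-line bundles express each $\cD_{i,s}$, and also $K_{\cX_i/\cM_i}$, as $\bQ$-linear combinations of $\bQ$-Cartier classes. If $\cP_i$ is not full-dimensional, I would work instead with the affine span of $\cP_i$; that is all that is needed for polynomiality on $\cP_i$. With this decomposition, the same formula is taken as the definition of the polynomial extension to $\cP_i\setminus\cP_i^0$, in agreement with the paper's phrase ``polynomial extension on the fixed chamber model.''

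Taking first Chern classes then gives
\begin{displaymath}
  \lambda_i(\ub)=\lambda_i(\ub_0)+\sum_{s}(b_s-b_{0,s})\,\delta_s,
  \qquad \delta_s:=c_1\bigl(\cO_{\cX_i}(\cD_{i,s})\bigr)\in A^1(\cX_i)_\bQ .
\end{displaymath}
Since first Chern classes commute in $A^*(\cX_i)$, the multinomial theorem expands $\lambda_i(\ub)^{n+r}$ as a sum of monomials in $\lambda_i(\ub_0)$ and the $\delta_s$. The coefficients are polynomials in $\ub$ of total degree at most $n+r$.

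Finally, applying the $\bQ$-linear map $(\ff_i)_!\colon A^{n+r}(\cX_i)_\bQ\to A^r(\cM_i)_\bQ$ of construction~(d) term by term shows that $\kappa^{(i)}_{r,\cM_i}(\ub)$ is a polynomial in $\ub$ of degree at most $n+r$. Its coefficients are the fixed classes
\begin{displaymath}
  (\ff_i)_!\Bigl(\lambda_i(\ub_0)^{m_0}\prod_s\delta_s^{m_s}\Bigr)\in A^r(\cM_i)_\bQ .
\end{displaymath}
Because the Gysin map commutes with base change (Proposition~\ref{prop:functoriality}), the same expansion holds after any pullback. Hence the identity holds at the operational level, not merely on fundamental classes.
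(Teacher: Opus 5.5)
Your proposal is correct and follows essentially the same route as the paper: $\lambda_i(\ub)$ is affine in $\ub$ in $\Pic(\cX_i)_\bQ$, so $\lambda_i(\ub)^{n+r}$ is a polynomial of degree at most $n+r$, and the linear map $(\ff_i)_!$ preserves this. The paper never needs each $\cD_{i,s}$ to be $\bQ$-Cartier, because it uses only the differences $(\ub-\ub')\cD_i$ for $\ub,\ub'\in\cP_i$, i.e.\ directions in the affine span of $\cP_i$, which is exactly your fallback. Your closing base-change remark is harmless but unnecessary, since an identity in $A^r(\cM_i)_\bQ$ is already an identity of operational classes.
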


\begin{proof}
  For any rational points $\ub,\ub'\in\cP_i$, both
  \begin{displaymath}
    K_{\cX_i/\cM_i}+\ub\cD_i
    \qquad\text{and}\qquad
    K_{\cX_i/\cM_i}+\ub'\cD_i
  \end{displaymath}
  are $\bQ$-Cartier. Hence their difference
  $(\ub-\ub')\cD_i$ is $\bQ$-Cartier, and
  $\lambda_i(\ub)$ depends affinely on $\ub$ in
  $\Pic(\cX_i)_\bQ$, and therefore in $A^1(\cX_i)_\bQ$.
  It follows that $\lambda_i(\ub)^{n+r}$ is a polynomial in $\ub$
  of total degree at most $n+r$. Applying the linear homomorphism
  \begin{displaymath}
    (\ff_i)_!\colon
    A^{n+r}(\cX_i)_\bQ\longrightarrow A^r(\cM_i)_\bQ
  \end{displaymath}
  shows that
  \begin{displaymath}
    \kappa^{(i)}_{r,\cM_i}(\ub)
      =(\ff_i)_!\bigl(\lambda_i(\ub)^{n+r}\bigr)
  \end{displaymath}
  is a polynomial in $\ub$ of total degree at most $n+r$.
\end{proof}

\begin{remark}[Mixed kappa classes and chamber derivatives]
  More generally, for rational line bundles
  $\Lambda_1,\dotsc,\Lambda_{n+r}$ on a proper flat family
  $f\colon X\to S$ of pure relative dimension $n$, one may define
  the symmetric multilinear class
  \begin{displaymath}
    \kappa_{r,S}(\Lambda_1,\dotsc,\Lambda_{n+r})
      :=f_!\Bigl(
        \prod_{a=1}^{n+r}c_1(\Lambda_a)
      \Bigr).
  \end{displaymath}
  The ordinary polarized kappa class is the diagonal specialization.
  In particular, the coefficients of the chamber polynomial have a
  mixed-intersection interpretation. Let $\bh=(h_1,\dotsc,h_q)$ 
  be a rational vector parallel to the affine span of $\cP_i$. Then
  $\sum_s h_s\cD_{i,s}$ is $\bQ$-Cartier, and we put
  \begin{displaymath}
    \delta_i(\bh)
      :=c_1\Bigl(
        \cO_{\cX_i}
        \Bigl(\sum_{s=1}^q h_s\cD_{i,s}\Bigr)
      \Bigr).
  \end{displaymath}
  If $0\leq m\leq n+r$ and every $\bh_a$ is parallel to the
  affine span of $\cP_i$, then the formal directional derivatives of
  the chamber polynomial satisfy
  \begin{displaymath}
    \partial_{\bh_1}\dotsb\partial_{\bh_m}
    \kappa^{(i)}_{r,\cM_i}(\ub)
      =\frac{(n+r)!}{(n+r-m)!}
       (\ff_i)_!\Bigl(
         \lambda_i(\ub)^{n+r-m}
         \prod_{a=1}^m\delta_i(\bh_a)
       \Bigr).
  \end{displaymath}
\end{remark}

\begin{theorem}[Operational wall-and-chamber compatibility]
  \label{thm:operational-wall-crossing}
  Suppose that $\cP_j$ is a face of $\cP_i$. Then, for every
  $\ub\in\cP_j\cap\bQ^q$, one has
  \begin{displaymath}
    \kappa^{(i)}_{r,\cM_i}(\ub)
      =\alpha_{ij}^*
        \Bigl(\kappa^{(j)}_{r,\cM_j}(\ub)\Bigr)
      \qquad\text{in }A^r(\cM_i)_\bQ.
  \end{displaymath}
  Thus, for every face inclusion in the subdivision, the chamber
  polynomials restrict compatibly under the corresponding wall-crossing
  morphism.
\end{theorem}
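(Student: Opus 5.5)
The plan is to reduce the statement to Theorem~\ref{thm:crepant-functoriality}(2), applied over the base $\cM_i$. We use the top row of diagram~\eqref{eq:wall-crossing} together with the Cartesian pullback of $\ff_j$ along $\alpha_{ij}$.

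First, functoriality (Proposition~\ref{prop:functoriality}) identifies the right-hand side. Form the fiber product $\cX_j\times_{\cM_j}\cM_i\to\cM_i$ and give it the pulled-back $\bQ$-line bundle $L_j(\ub)$. Then
\begin{displaymath}
  \alpha_{ij}^*\kappa^{(j)}_{r,\cM_j}(\ub)
    =\kappa_{r,\cM_i}\bigl(\cX_j\times_{\cM_j}\cM_i/\cM_i,\;L_j(\ub)\bigr).
\end{displaymath}
This uses base-change compatibility of the Gysin pushforward, which holds because $\ff_j$ is flat and proper. The top arrow of \eqref{eq:wall-crossing} induces a proper morphism over $\cM_i$:
\begin{displaymath}
  \rho\colon\cX_i\longrightarrow\cX_j\times_{\cM_j}\cM_i.
\end{displaymath}
Both sides are proper and flat of pure relative dimension $n$ over $\cM_i$.

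Second, we check the crepant pullback hypothesis $\rho^*L_j(\ub)=L_i(\ub)$ in $\Pic(\cX_i)_\bQ$ for $\ub\in\cP_j$. The input is \cite[Theorem~6.2]{meng2023mmp-for}, or the construction of the wall-crossing maps there: for $\ub$ on the face $\cP_j$, the morphism $\cX_i\to\cX_j$ is the relative log canonical model map for $K+\ub\cD_i$, and $\cD_{j,s}$ is the pushforward of $\cD_{i,s}$. Hence on fibers $K_{\cX_i}+\ub\cD_i=\rho^*(K_{\cX_j}+\ub\cD_j)$. To globalize this to an equality of $\bQ$-line bundles over the base, I would argue as follows. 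Both sides are $\bQ$-Cartier and agree on the open locus where $\rho$ is an isomorphism, and that locus is fiberwise dense in codimension one. Since the fibers are $S_2$ and the sheaves are reflexive powers compatible with base change (the Koll\'ar condition), the two reflexive sheaves agree.

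Third, we need the universal degree-one hypothesis of Theorem~\ref{thm:crepant-functoriality}(2). By the closing remark of that theorem, it suffices to find an open $U\subset\cX_j\times_{\cM_j}\cM_i$, fiberwise dense, over which $\rho$ is an isomorphism. I expect this to be the main obstacle. Over a point of $\cM_i$, the map $\rho_s$ is a birational contraction of stable pairs; it is the fiberwise log canonical model map. I would take $U$ to be the complement of the image of the $\rho$-exceptional locus. Its fiberwise density needs two things: that $\rho_s$ is birational on every irreducible component of the fiber, and that no component of $\cX_{i,s}$ is contracted onto a lower-dimensional set in a way that makes the target fiber acquire extra components. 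Here I would use that the target fiber is the stable model of the source fiber, so every component of the target is the birational image of a component of the source. If Lemma~\ref{lem:universal-cycle-degree} cannot be applied directly, the fallback is to verify $(\rho_Z)_*[\cX_{i,Z}]=[\cX_{j,Z}]$ for each integral $Z\to\cM_i$. One restricts to the generic point of $Z$, where this is birationality of $\rho$, and then handles the other components of $\cX_{j,Z}$ via flatness, exactly as in the proof of Lemma~\ref{lem:universal-cycle-degree}.

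Finally, Theorem~\ref{thm:crepant-functoriality}(2) with $e=1$ holds over a DM base, as noted after its proof, by testing on schemes. It gives
\begin{displaymath}
  \kappa^{(i)}_{r,\cM_i}(\ub)=\kappa_{r,\cM_i}\bigl(\cX_j\times_{\cM_j}\cM_i,\;L_j(\ub)\bigr)=\alpha_{ij}^*\kappa^{(j)}_{r,\cM_j}(\ub).
\end{displaymath}
Both sides are polynomial in $\ub$ by chamberwise polynomiality, and they agree on the rational points of $\cP_j$. Therefore they agree as polynomials restricted to the affine span of $\cP_j$, which gives the compatibility statement.
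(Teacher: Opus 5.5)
Your reduction matches the paper's: form $\cX'_j=\cX_j\times_{\cM_j}\cM_i$, take the induced $\rho\colon\cX_i\to\cX'_j$, apply Theorem~\ref{thm:crepant-functoriality}(2) with $e=1$, and finish by base change. The gap is in your second step. The identity $L_i(\ub)=\rho^*L'_j(\ub)$ in $\Pic(\cX_i)_\bQ$ is the whole crepant input, and your globalization of it fails.

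You compare the two $\bQ$-line bundles on the \emph{source} $\cX_i$. You then extend from the locus where $\rho$ is an isomorphism by an $S_2$/reflexivity argument. But wall-crossing morphisms contract whole fiber components and exceptional divisors. So $\rho^{-1}(U)$ is neither fiberwise dense in $\cX_i$ nor has complement of codimension $\geq2$. For example, take weighted pointed curves at the wall $b_4+b_5=1$. The rational tails carrying the fourth and fifth points are contracted, and over the boundary they sweep out a divisor $E\subset\cX_i$. Agreement off $E$ only gives $[L_i(\ub)]=\rho^*[L'_j(\ub)]+c[E]$. That $c=0$ holds because $L_i(\ub)$ has degree $0$ on the tails while $E$ has degree $-1$ on them, and an extension argument cannot see this.

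Fiberwise equality does not rescue the step either. At best it determines the class up to $\ff_i^*M$ with $M$ on $\cM_i$. Such a twist changes $(\ff_i)_!\bigl(\lambda_i(\ub)^{n+r}\bigr)$ by $(n+r)\,c_1(M)\,\kappa^{(i)}_{r-1,\cM_i}(\ub)+\dotsb$.

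What you need is the relative statement the paper uses. For $\ub^\circ\in\cP_j^0$, $\rho$ is the relative ample model of $K_{\cX_i/\cM_i}+\ub^\circ\cD_i$ over $\cM_i$ \cite[proof of Theorem~6.2(3)]{meng2023mmp-for}. This yields $L_i(\ub^\circ)=\rho^*L'_j(\ub^\circ)$ globally \cite[proof of Lemma~4.9]{meng2023mmp-for}. Also, $\rho$ is the ample model only for $\ub$ in the open face; on the boundary of $\cP_j$ the ample model is a further contraction. So the identity on all of $\cP_j\cap\bQ^q$ must come from affineness of both sides in $\ub$, as in the paper, not from your claim that $\rho$ is the lc model for every $\ub$ on the face.

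For the degree-one hypothesis, your primary route has a similar fiber-versus-total-space hole. Birationality of each $\rho_s$ does not show that $\rho$ is an isomorphism near the generic points of fiber components in $\cX'_j$. That is what Lemma~\ref{lem:universal-cycle-degree} requires. The paper supplies it in three steps:
\begin{enumerate}
\item it takes $U$ to be the finite locus of $\rho$;
\item it uses connected fibers and seminormality of slc fibers to make $(\rho_U)_s$ an isomorphism;
\item it uses flatness and Nakayama to conclude $\rho^{-1}(U)\cong U$.
\end{enumerate}

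Your fallback, by contrast, is sound and shorter than the paper's route, because Theorem~\ref{thm:crepant-functoriality}(2) only asks for the cycle identity over integral $Z$. By flatness, every component of $\cX_{i,Z}$ and $\cX'_{j,Z}$ dominates $Z$. All multiplicities are $1$ because the fibers are geometrically reduced. Base change to $\overline{K(Z)}$ commutes with proper pushforward and is injective on cycles, so the identity reduces to a single geometric fiber $\rho_{\bar s}$. There you need more than birationality: each component of $(\cX'_j)_{\bar s}$ must be dominated by exactly one component of $(\cX_i)_{\bar s}$. This holds because a finite connected fiber over the generic point of a target component is a single point. State these facts explicitly; with them, the seminormality and Nakayama arguments become unnecessary.
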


\begin{proof}
  This is an application of
  Theorem~\ref{thm:crepant-functoriality}\textup{(2)}. Set
  \begin{displaymath}
    \cX'_j:=\cX_j\times_{\cM_j}\cM_i,
  \end{displaymath}
  let $\ff'_j\colon\cX'_j\to\cM_i$ be the projection, and write
  $\cD'_{j,s}$ for the pullback of $\cD_{j,s}$ and
  $\ub\cD'_j:=\sum_{s=1}^q b_s\cD'_{j,s}$. The lifted wall-crossing
  map in diagram~\eqref{eq:wall-crossing} induces a morphism
  $\rho\colon\cX_i\to\cX'_j$ fitting into the diagram
  \begin{displaymath}
    \begin{tikzcd}
      \cX_i
        \arrow[r, "\rho"]
        \arrow[d, "\ff_i"']
      & \cX'_j
        \arrow[r]
        \arrow[d, "\ff'_j"']
      & \cX_j
        \arrow[d, "\ff_j"]
      \\
      \cM_i
        \arrow[r, equal]
      & \cM_i
        \arrow[r, "\alpha_{ij}"]
      & \cM_j.
    \end{tikzcd}
  \end{displaymath}
  Put
  \begin{displaymath}
    L'_j(\ub)
      :=\cO_{\cX'_j}
        \bigl(K_{\cX'_j/\cM_i}+\ub\cD'_j\bigr)
      \in\Pic(\cX'_j)_\bQ.
  \end{displaymath}

  Let $\ub^\circ\in\cP_j^0\cap\bQ^q$. By
  \cite[proof of Theorem~6.2(3)]{meng2023mmp-for}, the morphism
  $\rho\colon\cX_i\to\cX'_j$ is the relative ample-model morphism of
  $K_{\cX_i/\cM_i}+\ub^\circ\cD_i$. Hence, by
  \cite[proof of Lemma~4.9]{meng2023mmp-for},
  \begin{displaymath}
    [L_i(\ub^\circ)]
      =\rho^*[L'_j(\ub^\circ)]
      \qquad\text{in }\Pic(\cX_i)_\bQ.
  \end{displaymath}
  The marked families and the morphism $\rho$ are independent of
  $\ub^\circ\in\cP_j^0$. Thus the preceding identity holds for every
  rational point of $\cP_j^0$. Since both sides depend affinely on
  $\ub$, it follows that
  \begin{displaymath}
    [L_i(\ub)]
      =\rho^*[L'_j(\ub)]
      \qquad\text{in }\Pic(\cX_i)_\bQ
  \end{displaymath}
  for every $\ub\in\cP_j\cap\bQ^q$.

  We next verify the universal degree-one condition by applying
  Lemma~\ref{lem:universal-cycle-degree}. Let $U\subset\cX'_j$ be the
  open locus over which $\rho$ is finite, put
  $V:=\rho^{-1}(U)$, and write
  \begin{displaymath}
    \rho_U:=\rho|_V\colon V\longrightarrow U.
  \end{displaymath}
  Concretely, $U$ is the locus where the fibers of $\rho$ are
  zero-dimensional. It is open by upper semicontinuity of fiber
  dimension, and the proper quasi-finite morphism $\rho_U$ is finite.

  We claim that $U_s$ is dense in $(\cX'_j)_s$ for every geometric
  point $s\to\cM_i$. By \cite[proof of
  Theorem~6.2(3)]{meng2023mmp-for}, the morphism
  \begin{displaymath}
    \rho_s\colon(\cX_i)_s\longrightarrow(\cX'_j)_s
  \end{displaymath}
  is a contraction to an ample model. It is birational on every
  irreducible component of the target; see \cite[proof of
  Lemma~4.9]{meng2023mmp-for}. Hence the generic point of every
  irreducible component of $(\cX'_j)_s$ has finite fiber and belongs
  to $U_s$.

  The contraction $\rho_s$ is surjective and has connected fibers.
  Therefore the finite morphism
  \begin{displaymath}
    (\rho_U)_s\colon V_s\longrightarrow U_s
  \end{displaymath}
  is bijective. The schemes $V_s$ and $U_s$ are reduced, and $U_s$
  is seminormal because it is open in the slc variety $(\cX'_j)_s$.
  By the standard characterization of seminormality for varieties over
  $\bC$ \cite{greco1980on-seminormal-schemes}, the finite bijective
  morphism $(\rho_U)_s$ is an isomorphism.

  It remains to pass from the fibers to the total families. This can be
  checked after pulling back to a scheme atlas of $\cM_i$ and then
  working \'etale locally on $U$. Affine locally, write
  \begin{displaymath}
    S=\Spec R,
    \qquad
    U=\Spec A,
    \qquad
    V=\Spec B,
  \end{displaymath}
  where $A\to B$ is finite. Since $V\to S$ is flat, $B$ is flat over
  $R$. The preceding geometric-fiber result and faithful flatness imply
  that, for every $\mathfrak p\in\Spec R$, the map
  \begin{displaymath}
    A\otimes_R k(\mathfrak p)
      \longrightarrow
    B\otimes_R k(\mathfrak p)
  \end{displaymath}
  is an isomorphism. Let $C$ be the cokernel of $A\to B$. For every
  prime $\mathfrak q\subset A$ lying over $\mathfrak p$, one has
  \begin{math}
    C_{\mathfrak q}/\mathfrak pC_{\mathfrak q}=0.
  \end{math}
  Nakayama's lemma gives $C_{\mathfrak q}=0$, and hence $C=0$. Thus
  $A\to B$ is surjective. If $K$ is its kernel, the $R$-flatness of
  $B$ shows, after tensoring
  \begin{displaymath}
    0\longrightarrow K\longrightarrow A\longrightarrow B
      \longrightarrow0
  \end{displaymath}
  with $k(\mathfrak p)$, that
  $K\otimes_R k(\mathfrak p)=0$ for every $\mathfrak p$. Thus, for
  every $\mathfrak q\subset A$ over $\mathfrak p$, one has
  $K_{\mathfrak q}/\mathfrak pK_{\mathfrak q}=0$. Another application
  of Nakayama's lemma gives $K=0$. Therefore $A\simeq B$,
  and consequently
  \begin{displaymath}
    \rho^{-1}(U)\xrightarrow{\sim}U.
  \end{displaymath}

  Lemma~\ref{lem:universal-cycle-degree}, with $e=1$, now gives
  \begin{displaymath}
    (\rho_T)_*[\cX_{i,T}]=[\cX'_{j,T}]
  \end{displaymath}
  for every morphism $T\to\cM_i$; in particular, this includes
  nonreduced base changes. We may therefore apply
  Theorem~\ref{thm:crepant-functoriality}\textup{(2)}, which gives
  \begin{displaymath}
    \kappa^{(i)}_{r,\cM_i}(\ub)
      =(\ff'_j)_!\bigl(c_1(L'_j(\ub))^{n+r}\bigr).
  \end{displaymath}
  Finally, arbitrary base-change functoriality gives
  \begin{displaymath}
    (\ff'_j)_!\bigl(c_1(L'_j(\ub))^{n+r}\bigr)
      =\alpha_{ij}^*
        \Bigl(\kappa^{(j)}_{r,\cM_j}(\ub)\Bigr),
  \end{displaymath}
  proving the theorem.
\end{proof}

\begin{corollary}[Piecewise-polynomial numerical pairings]
  Let $T$ be a proper scheme, let $g\colon T\to\cM_i$ be a morphism,
  and let $\gamma\in A_d(T)_\bQ$. Let $r_1,\dotsc,r_m$ be nonnegative
  integers such that
  \begin{math}
    r_1+\dotsb+r_m=d.
  \end{math}
  Then the function on rational points
  \begin{displaymath}
    \ub\longmapsto
    \deg\Bigl(
      \Bigl(
        \prod_{a=1}^m
        g^*\kappa^{(i)}_{r_a,\cM_i}(\ub)
      \Bigr)\cap\gamma
    \Bigr)
  \end{displaymath}
  is polynomial on $\cP_i\cap\bQ^q$, of total degree at most
  $mn+d$. If $\cP_j$ is a face of $\cP_i$, then on $\cP_j\cap\bQ^q$ it
  agrees with
  \begin{displaymath}
    \ub\longmapsto
    \deg\Bigl(
      \Bigl(
        \prod_{a=1}^m
        (\alpha_{ij}\circ g)^*
        \kappa^{(j)}_{r_a,\cM_j}(\ub)
      \Bigr)\cap\gamma
    \Bigr).
  \end{displaymath}
\end{corollary}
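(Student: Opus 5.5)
The plan is to reduce both assertions to the chamberwise polynomiality proposition and to Theorem~\ref{thm:operational-wall-crossing}, using only that pullback and capping are linear.

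First I will establish polynomiality on $\cP_i\cap\bQ^q$. By the chamberwise polynomiality proposition, $\kappa^{(i)}_{r_a,\cM_i}(\ub)$ is a polynomial in $\ub$ of total degree at most $n+r_a$ with coefficients in $A^{r_a}(\cM_i)_\bQ$. I will write it as $\sum_\beta \ub^\beta c_{a,\beta}$, where the sum is finite and $|\beta|\le n+r_a$. The pullback $g^*$ is a ring homomorphism $A^*(\cM_i)_\bQ\to A^*(T)_\bQ$. For a scheme $T$ mapping to a DM stack, this is just the bivariant pullback, which is $\bQ$-linear. So
\[
g^*\kappa^{(i)}_{r_a,\cM_i}(\ub)=\sum_\beta \ub^\beta\, g^*c_{a,\beta}.
\]
Expanding the product over $a$ then gives a polynomial in $\ub$ with coefficients in $A^d(T)_\bQ$. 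Its total degree is at most $\sum_a(n+r_a)=mn+d$.

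Next I cap each coefficient with $\gamma\in A_d(T)_\bQ$ and apply $\deg\colon A_0(T)_\bQ\to\bQ$, which is defined because $T$ is proper. Both operations are $\bQ$-linear, so the numerical pairing is a rational polynomial in $\ub$ of total degree at most $mn+d$.

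For the face compatibility, I will apply Theorem~\ref{thm:operational-wall-crossing} to each factor. For $\ub\in\cP_j\cap\bQ^q$ it gives $\kappa^{(i)}_{r_a,\cM_i}(\ub)=\alpha_{ij}^*\kappa^{(j)}_{r_a,\cM_j}(\ub)$. Pulling back by $g$ and using $g^*\alpha_{ij}^*=(\alpha_{ij}\circ g)^*$, which is functoriality of bivariant pullback, shows that the two products agree as classes in $A^d(T)_\bQ$. Capping with $\gamma$ and taking degrees then gives equal values.

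There is no real obstacle. The one point to check is that operational pullback from the DM stacks $\cM_i$ and $\cM_j$ to the scheme $T$ is multiplicative and functorial in Vistoli's framework. If needed, this can be phrased through the coarse-space isomorphisms $\pi^*$ of Section~\ref{sec:stack-to-coarse}.
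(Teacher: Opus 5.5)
Your proposal is correct and follows essentially the same route as the paper. Polynomiality comes from the chamberwise polynomiality proposition together with linearity of $g^*$, of capping with $\gamma$, and of $\deg$; face compatibility comes from applying Theorem~\ref{thm:operational-wall-crossing} to each factor and then pulling back by $g$ (you pull back before multiplying, the paper multiplies first, which agrees because $g^*$ is a ring homomorphism).
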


\begin{proof}
  Each class $\kappa^{(i)}_{r_a,\cM_i}(\ub)$ is polynomial in
  $\ub$ of total degree at most $n+r_a$. Hence their product is
  polynomial of total degree at most
  \begin{math}
    \sum_{a=1}^m(n+r_a)=mn+d.
  \end{math}
  Pullback by $g$, intersection with $\gamma$, and degree are linear,
  so the first assertion follows. If $\cP_j$ is a face of $\cP_i$,
  Theorem~\ref{thm:operational-wall-crossing} identifies each factor
  with the pullback of the corresponding class on $\cM_j$. Multiplying
  these identities and pulling back by $g$ proves the second assertion.
\end{proof}

The wall-crossing morphisms are compatible under composition. Hence the
preceding theorem applies simultaneously to the entire face poset of
the subdivision. Once the Chow groups are tensored with $\bR$, the
chamber polynomials define classes for every real coefficient vector,
and the same compatibility holds on every real face because it is a
polynomial identity holding on the dense set of rational points.

Let $\bm M_i$ be the coarse moduli space of $\cM_i$, let
\begin{math}
  \pi_i\colon\cM_i\longrightarrow\bm M_i
\end{math}
be the coarse moduli map, let
$\bar\alpha_{ij}\colon\bm M_i\to\bm M_j$ be the induced wall-crossing
morphism, and let
\begin{displaymath}
  \bar\kappa^{(i)}_{r,\bm M_i}(\ub)
    :=(\pi_i^*)^{-1}\kappa^{(i)}_{r,\cM_i}(\ub).
\end{displaymath}
Naturality of pullback through the coarse-space isomorphisms gives
\begin{displaymath}
  \bar\kappa^{(i)}_{r,\bm M_i}(\ub)
    =\bar\alpha_{ij}^*
      \bigl(\bar\kappa^{(j)}_{r,\bm M_j}(\ub)\bigr)
\end{displaymath}
for every face inclusion $\cP_j\subset\cP_i$.

In particular, for $r=0$, the volume functions on the chambers glue to
a continuous piecewise-polynomial function of the coefficients. For
$r=1$, the chamber-model logarithmic CM classes agree under pullback
across every wall.

\section{Riemann--Roch and a formula for
  \texorpdfstring{$\kappa_r$}{kappa r}}
\label{sec:GRR}

\subsection{Grothendieck groups}
\label{sec:K-groups}

There are three basic Grothendieck groups associated with a scheme $X$:

\begin{enumerate}
\item $K_0(\Coh X) = K_0(D^b(\Coh X))$, the Grothendieck group of coherent
  sheaves on $X$. Here, $D^b(\Coh X)$ is the bounded derived category of
  coherent sheaves on~$X$.
\item $K_0(\Perf X) = K_0(D^\perfect(X))$. Here, $D^\perfect(X)$ is the full
  triangulated subcategory of $D^b(\Coh X)$ consisting of perfect complexes,
  which are locally quasi-isomorphic to complexes of vector bundles.
\item $K_0(\Vect X)$, the Grothendieck group of locally free sheaves
  on $X$.
\end{enumerate}
There are natural homomorphisms
\begin{displaymath}
  K_0(\Vect X) \xrightarrow{\alpha_X}
  K_0(\Perf X) \xrightarrow{\beta_X} K_0(\Coh X).
\end{displaymath}
The map $\alpha_X$ is an isomorphism when $X$ has the resolution
property; for example, this holds when $X$ is quasi-projective over a
field $k$. It need not be an isomorphism for a general scheme of
finite type over $k$. The map $\beta_X$ is an isomorphism if $X$ is
smooth, but rarely otherwise.

\begin{remark}
  Notation for these groups varies widely in the literature. The book
  \cite{fulton1998intersection-theory}, our main reference for
  intersection theory, uses the following notations:
  $K_0 X$ for $K_0(\Coh X)$ and $K^0 X$ for $K_0(\Vect X)$.
\end{remark}

We recall that for a proper morphism $f$ there is a pushforward
homomorphism
\begin{displaymath}
  f_*\colon K_0(\Coh X) \longrightarrow K_0(\Coh Y), \qquad
  [\cF] \longmapsto \sum_{k\ge0} (-1)^k [R^k f_* \cF].
\end{displaymath}

We say that a morphism $f\colon X\to Y$ is \emph{projective} if it
factors as $X\xrightarrow{i}\bP_Y(V)\to Y$, where $i$ is a closed
embedding and $V$ is locally free on~$Y$.

For later use, recall the following standard fact. If
$f\colon X\to Y$ is flat and projective, then perfect pushforward is
compatible with coherent pushforward, and for every
$P\in K_0(\Vect X)$ the class $\dR f_*P\in K_0(\Perf Y)$ lies in the
image of
\begin{displaymath}
  \alpha_Y\colon K_0(\Vect Y)\longrightarrow K_0(\Perf Y).
\end{displaymath}
The perfectness assertion follows, for example, from
\cite[III.Prop.~4.8]{SGA6}, 
\cite[Prop.~2.7(a)]{thomason1990higher-algebraic}, or
\cite[\href{https://stacks.math.columbia.edu/tag/0B91}
{Lemma~36.30.4(1)}]{stacks-project}. For the vector-bundle lift, see
\cite[1.1]{panin1991on-algebraic}, which extends
\cite[7.2.7]{quillen1973higher-algebraic} from $\bP_Y^m$ to
$\bP_Y(V)$. For a quick direct explanation, put
$L=i^*\cO_{\bP_Y(V)}(1)$. Restricting the universal quotient gives
$f^*\Sym^u(V)\onto L^u$; the associated co-Koszul complex, tensored
with a vector bundle on~$X$, and relative Serre vanishing produce the
required class in $K_0(\Vect Y)$.

\subsection{A formula for Gysin pushforward under a proper flat
  morphism}
\label{sec:gysin-formula}

Vector bundles (and perfect complexes as well) have operational Chern
classes and Chern characters, and any $P\in K_0(\Perf X)$ has Chern
classes and Chern characters in operational Chow cohomology; see
\cite[Chapter~3]{fulton1998intersection-theory} and
\cite[\href{https://stacks.math.columbia.edu/tag/0ESY}{Section~42.46}]{stacks-project}.

\begin{theorem}\label{thm:gysin-formula}
  Let $r\geq0$, and let $f\colon X\to Y$ be a proper flat morphism
  of pure relative dimension~$n$. Let $P\in K_0(\Vect X)$, assume
  that $\dR f_*P$ lies in the image of
  $K_0(\Vect Y)\to K_0(\Perf Y)$, and suppose that
  $\ch_j(P)=0$ for $j<r+n$. Write
  \begin{displaymath}
    \ch(P)=\ch(P)_{r+n}+\text{(terms of degree $>r+n$)}.
  \end{displaymath}
  Then the following identities hold in $A^*(Y)_\bQ$:
  \begin{enumerate}
  \item  $\ch_j(\dR f_* P) = 0$ for $j<r$, and
    $\ch_r(\dR f_* P)=f_!\ch(P)_{r+n}$.
  \item If $r\ge1$, then 
    \begin{displaymath}
      f_! \ch(P)_{r+n}
      = \frac{(-1)^{r-1}}{(r-1)!} c_r (\dR f_* P).
    \end{displaymath}
  \end{enumerate}
\end{theorem}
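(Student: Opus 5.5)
Part (1) will come from Grothendieck--Riemann--Roch in operational form for the flat proper morphism $f$, and part (2) from Newton's identities relating Chern character components to Chern classes.

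\emph{Step 1: GRR.} For $P\in K_0(\Vect X)$ with $\dR f_*P$ lifted to $K_0(\Vect Y)$, we want the identity
\begin{displaymath}
  \ch(\dR f_*P)=f_!\bigl(\ch(P)\,\td(T_f)\bigr)\qquad\text{in }A^*(Y)_\bQ,
\end{displaymath}
where $\td(T_f)$ is the Todd class of the virtual relative tangent bundle. The bivariant GRR of Fulton \cite[Chapter~18, \S18.3]{fulton1998intersection-theory} gives this for projective local complete intersection morphisms, and flat projective morphisms are covered after factoring through $\bP_Y(V)$. For merely proper flat $f$, I would not use a Todd class at all. Instead I would prove directly the weaker statement that is actually needed: the low-degree part of $\ch(\dR f_*P)$ only sees $\ch(P)_{r+n}$.

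To do this, I test the operational identity on an integral $Z\to Y$ with $\alpha=[Z]$. The vector-bundle lift of $\dR f_*P$ makes $\ch(\dR f_*P)\cap[Z]$ meaningful and compatible with base change. Flatness gives $f'^*[Z]=[X_Z]$. The homological Riemann--Roch theorem \cite[Theorem~18.2 and Example~18.3.?]{fulton1998intersection-theory} then gives
\begin{displaymath}
  \tau_Z(f'_*P_Z)=f'_*\tau_{X_Z}(P_Z)
  =f'_*\bigl(\ch(P)\cap\tau(\cO_{X_Z})\bigr).
\end{displaymath}
Here $\tau(\cO_{X_Z})=[X_Z]+(\text{lower-dimensional terms})$. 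Since $\ch(P)$ begins in degree $r+n$, the degree-$(\dim Z-r)$ part of the right side is $f'_*(\ch(P)_{r+n}\cap[X_Z])=f_!\ch(P)_{r+n}\cap[Z]$, and all parts of higher dimension vanish. On the left, $\tau_Z(E)=\ch(E)\cap\tau(\cO_Z)$ with $\tau(\cO_Z)=[Z]+\dots$. Inducting on $j$ then gives $\ch_j(\dR f_*P)\cap[Z]=0$ for $j<r$ and $\ch_r\cap[Z]=f_!\ch(P)_{r+n}\cap[Z]$, modulo the higher parts of $\tau(\cO_Z)$.

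\emph{The main obstacle.} The induction above only controls the top-dimensional parts on each $Z$. An operational class is determined by its action on all cycles of all base changes, so a class $c\in A^j(Y)$ with $c\cap[Z]$ computed only up to lower-order Todd corrections must be handled carefully. I would proceed degree by degree in $j$: assume $\ch_i(\dR f_*P)=0$ operationally for $i<j$. Then the lower Todd terms of $\tau(\cO_Z)$ contribute only through $\ch_i$ with $i<j$, which already vanish. So the codimension-$j$ component of the identity on $[Z]$ is exactly $\ch_j\cap[Z]$, and it equals $0$ for $j<r$ and $f_!\ch(P)_{r+n}\cap[Z]$ for $j=r$. The argument must apply to every $Z$ over every base change $T\to Y$, and integral cycles generate. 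This is the delicate point, since it needs naturality of $\tau$ under arbitrary base change for flat $f$ together with the compatibility $\tau_{X_Z}(P_Z)=\ch(P)\cap\tau(\cO_{X_Z})$.

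\emph{Step 2: Newton's identity.} Let $E=\dR f_*P$ be a virtual bundle with $\ch_j(E)=0$ for $1\le j<r$; for $r\ge1$ this follows from (1). Newton's identities express $\ch_r$ through the power sums $p_r=r!\,\ch_r$ and the elementary symmetric functions $e_i=c_i$ as
\begin{displaymath}
  p_r=\sum_{i=1}^{r-1}(-1)^{i-1}e_i\,p_{r-i}+(-1)^{r-1}r\,e_r.
\end{displaymath}
The vanishing of $p_1,\dots,p_{r-1}$ gives $e_1=\dots=e_{r-1}=0$ inductively, so the sum drops out and $r!\,\ch_r=(-1)^{r-1}r\,c_r$. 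These identities hold in the operational ring because Chern classes of the lifted vector bundles commute and satisfy the splitting principle. Hence $\ch_r(E)=\frac{(-1)^{r-1}}{(r-1)!}c_r(E)$, and combining with (1) gives (2).
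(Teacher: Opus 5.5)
Your proposal is correct and follows essentially the same route as the paper: test on integral $Z$ over arbitrary base changes, and apply singular Riemann--Roch (covariance plus the module property, using the vector-bundle lift of $\dR f_*P$ and proper flat base change) with $\tau(\cO_{X_Z})=[X_Z]+{}$lower-dimensional terms and likewise on $Z$. Then induct on $j$, using the already-established operational vanishing of the lower $\ch_i$ to kill the Todd corrections, and finish with Newton's identity. Two side remarks are not needed: the aside about a bivariant Todd class for flat projective morphisms can be dropped (such morphisms need not be l.c.i.), and your induction really uses base-change compatibility of $\dR f_*P$ and of Chern classes, not any naturality of $\tau$ itself.
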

\begin{proof}
  We prove part (1). Part (2) then follows from
  Lemma~\ref{lem:newton}.

  To explain the main idea of the proof, let us first
  assume that $X$ and $Y$ are smooth varieties of dimension $m+n$ and
  $m$, respectively. In this case, the Gysin pushforward
  $f_!:A^*(X)\to A^*(Y)$ in cohomology is identified with the proper
  pushforward $f_*:A_*(X)\to A_*(Y)$ in homology via the isomorphisms
  $A^*=A_*$. The Grothendieck--Riemann--Roch theorem states that
  \begin{displaymath}
    \ch( \dR f_* P ) \cdot \td(T_Y) =
    f_* \bigl( \ch(P) \cdot \td(T_X)    \big)
    \quad\text{in } A_*(Y)_\bQ.
  \end{displaymath}
  Since $\td(T_X) = 1 + \text{(terms of degree $>0$)}$,
  the only term of dimension $\ge m-r$ on the right is
  \begin{displaymath}
    f_* \ch(P)_{r+n}.
  \end{displaymath}
  On the left, we have
  \begin{displaymath}
    (\ch_0 + \ch_1 + \dotsb) \cdot (1 + \text{(terms of degree $>0$)}).
  \end{displaymath}
  Looking inductively at the terms of dimensions $m$, $m-1$, \dots,
  $m-r$, we get (1).

  \medskip

  Now let us prove the general case. To prove that two classes
  $\gamma,\gamma'$ in operational cohomology are the same, we must
  show that $\gamma\cap W = \gamma'\cap W$ in $A_*(Y')_\bQ$ for every
  cycle $W$ and every base change $Y'\to Y$. We can reduce to the case
  where $W$ is irreducible and is the image of a closed embedding
  $\iota\colon Z\to Y'$. Bivariant classes are compatible with proper
  pushforward along $\iota$. The functor $\dR f_*$ preserves perfect
  complexes and commutes with arbitrary base change when $f$ is proper
  and flat; see
  \cite[\href{https://stacks.math.columbia.edu/tag/0B91}{Lemma~36.30.4(1)}]{stacks-project}.
  Pullback compatibility for perfect complexes implies that Chern
  classes commute with pullbacks; see
  \cite[\href{https://stacks.math.columbia.edu/tag/0FAC}{Lemma~42.46.6}]{stacks-project}.
  Thus, it is enough to show that $\gamma\cap[Z] = \gamma'\cap [Z]$ for
  every irreducible scheme $Z$.

  We prove a slightly stronger result. Let $Y$ be a scheme of pure
  dimension~$m$, and let $P\in K_0(\Vect X)$ be as in the statement of the
  theorem. We show that $\ch_j(\dR f_*P)\cap [Y] = 0$ for
  $j<r$ and that
  \begin{displaymath}
    \ch_r(\dR f_*P)\cap [Y] = f_*\bigl( \ch(P)_{r+n}\cap [X] )
    \qquad\text{in } A_{m-r}(Y)_\bQ.
  \end{displaymath}
  This follows from the Baum--Fulton--MacPherson singular
  Riemann--Roch theorem,
  as stated in
  \cite[Theorem 18.3]{fulton1998intersection-theory} (after
  \cite{baum1975riemann-roch, fulton1983riemann-roch}).
  The most convenient form for us is
  \cite[Cor.~18.3.1]{fulton1998intersection-theory}. Let
  \begin{displaymath}
    \tau_T\colon K_0(\Coh T)\longrightarrow A_*(T)_\bQ
  \end{displaymath}
  denote the singular Riemann--Roch transformation for a scheme $T$, and
  set
  \begin{displaymath}
    \Td(X):=\tau_X([\cO_X]),
    \qquad
    \Td(Y):=\tau_Y([\cO_Y]).
  \end{displaymath}
  By the lift hypothesis, choose $Q\in K_0(\Vect Y)$ with
  $\alpha_Y(Q)=\dR f_*P$. Then
  \begin{displaymath}
    \beta_Y\alpha_Y(Q)=f_*\beta_X\alpha_X(P).
  \end{displaymath}
  Covariance of the singular Riemann--Roch transformation and its
  module property, applied to $P$ on $X$ and to $Q$ on $Y$,
  give
  \begin{equation}\label{eq:RR}
    \ch(\dR f_*P)\cap\Td(Y)
      =f_*\bigl(\ch(P)\cap\Td(X)\bigr)
    \quad\text{in }A_*(Y)_\bQ.
  \end{equation}
  Since $X$ and $Y$ have pure dimensions $m+n$ and $m$,
  \cite[18.3(5)]{fulton1998intersection-theory} gives
  \begin{eqnarray*}
    \Td(X) &=& [X] + \text{(terms of dimension $<m+n$)} \\
    \Td(Y) &=& [Y] + \text{(terms of dimension $<m$)}.
  \end{eqnarray*}
  In \eqref{eq:RR}, the only term of dimension $\ge m-r$ on the right is
  \begin{displaymath}
    f_* \bigl( \ch(P)_{r+n} \cap [X] \bigr).
  \end{displaymath}
  On the left, we have
  \begin{displaymath}
    (\ch_0 + \ch_1 + \dotsb) \cap \bigl( [Y] +
    \text{(terms of dimension $<m$)}\bigr).
  \end{displaymath}
  We now argue by induction on $j=0,\dotsc,r$, uniformly after every
  base change $Y'\to Y$ and for every integral closed subscheme
  $Z\subset Y'$. Write
  \begin{displaymath}
    f_Z\colon X_Z:=X\times_Y Z\longrightarrow Z,
    \qquad P_Z:=P|_{X_Z}.
  \end{displaymath}
  Proper flat base change identifies
  $\dR(f_Z)_*P_Z$ with the pullback of $\dR f_*P$ to~$Z$. Assume that
  the asserted operational vanishing has already been proved for every
  $\ch_k(\dR f_*P)$ with $k<j$. Applying \eqref{eq:RR} to $f_Z$ and
  $P_Z$, the induction hypothesis kills every term on its left side
  involving $\ch_k$ with $k<j$. Comparing the component of dimension
  $\dim Z-j$, we obtain
  \begin{displaymath}
    \ch_j\bigl(\dR(f_Z)_*P_Z\bigr)\cap[Z]=0
    \qquad\text{for }j<r,
  \end{displaymath}
  because the right side has no component of that dimension. For
  $j=r$, the same comparison gives
  \begin{displaymath}
    \ch_r\bigl(\dR(f_Z)_*P_Z\bigr)\cap[Z]
      =(f_Z)_*\bigl(\ch(P_Z)_{r+n}\cap[X_Z]\bigr).
  \end{displaymath}
  Since these identities hold after every base change and on every
  integral cycle, they prove the asserted equalities of operational
  classes and complete the proof.
\end{proof}

We recall that vector bundles and, more generally, perfect complexes
have Chern classes and Chern characters, and that there are universal
identities between them; cf.
\cite[\href{https://stacks.math.columbia.edu/tag/0GUD}{Lemma~42.46.2}]{stacks-project},
\cite[\href{https://stacks.math.columbia.edu/tag/0FAD}{Remark~42.46.8}]{stacks-project}. For
example, $\ch_1 = c_1$, $\ch_2 = \frac12(c_1^2 -2c_2)$,
$\ch_3 = \frac16(c_1^3 - 3c_1c_2 +3c_3)$, etc. In terms of Chern roots
$x_s$, the Chern class $c_k$ is the $k$-th elementary symmetric
polynomial $e_k(x_s)$, and the $k$-th component of the Chern character,
$\ch_k$, is the scaled power sum
\begin{displaymath}
  \ch_k = \frac1{k!} \sum_{s} x_s^k = \frac1{k!} p_k(x_s).
\end{displaymath}

\begin{lemma}\label{lem:newton}
  Let $r\ge1$ and let $V\in K_0(\Perf X)$. If
  $\ch_1(V)=\dotsb=\ch_{r-1}(V)=0$, then
  \begin{displaymath}
    \ch_r(V)=(-1)^{r-1}\frac{c_r(V)}{(r-1)!}.
  \end{displaymath}
\end{lemma}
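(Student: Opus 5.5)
The identity is a universal relation between Chern classes and Chern characters, so I will prove it by Newton's identities. The only inputs are that operational Chern classes of a perfect complex satisfy the same universal polynomial relations as for vector bundles, and that they commute with each other. Both follow from the splitting principle and the stated universal identities (Stacks Project, Lemma~42.46.2 and Remark~42.46.8). So it suffices to prove the corresponding identity in the ring of symmetric functions, where $c_k=e_k$ and $\ch_k=p_k/k!$, and then specialize.

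The key step is Newton's identity
\begin{displaymath}
  p_k=\sum_{i=1}^{k-1}(-1)^{i-1}e_i\,p_{k-i}+(-1)^{k-1}k\,e_k ,
\end{displaymath}
valid for all $k\ge1$. From the hypothesis $\ch_1=\dotsb=\ch_{r-1}=0$ we get $p_1=\dotsb=p_{r-1}=0$. I will then show by induction on $k$ that $e_k=0$ for $1\le k\le r-1$. Assuming $e_i=0$ for $i<k$, every term of the sum vanishes, so Newton's identity gives $0=p_k=(-1)^{k-1}k\,e_k$. Since we have rational coefficients, this forces $e_k=0$.

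For $k=r$ all mixed terms $e_i p_{r-i}$ with $1\le i\le r-1$ vanish, so $p_r=(-1)^{r-1}r\,e_r$. Dividing by $r!$ gives
\begin{displaymath}
  \ch_r=\frac{(-1)^{r-1}r\,c_r}{r!}=\frac{(-1)^{r-1}}{(r-1)!}\,c_r .
\end{displaymath}

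The main point to check is the justification for applying this symmetric-function argument to perfect complexes rather than vector bundles. There is one subtlety in the inductive step. The hypothesis is the vanishing of the operational classes $\ch_k(V)$, not of the formal power sums themselves. This does no harm, because Newton's identity holds as an identity among the operational classes $c_i(V)$ and $\ch_j(V)$ in the commutative subring they generate, and the induction uses only that identity together with $\bQ$-coefficients. I would cite the Stacks Project universal-polynomial statement so that $c_k(V)$ is a fixed polynomial in $\ch_1(V),\dotsc,\ch_k(V)$ with zero constant term. That gives $c_k(V)=0$ for $k<r$ directly, avoiding any need for a splitting of $V$ itself.
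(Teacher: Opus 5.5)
Your proof is correct and is essentially the paper's argument: both specialize Newton's identity to the case $p_1=\dotsb=p_{r-1}=0$, and then divide by $r!$. The paper writes the identity in the form $r e_r=\sum_{j=1}^r(-1)^{j-1}p_je_{r-j}$. Your preliminary induction showing $c_k(V)=0$ for $k<r$ is harmless but unnecessary, because every mixed term $e_ip_{r-i}$ with $1\le i\le r-1$ already vanishes, since $1\le r-i\le r-1$.
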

\begin{proof}
  For any $r\ge1$, the Newton identity says
  \begin{displaymath}
    r e_r = \sum_{j=1}^r (-1)^{j-1} p_j e_{r-j}.
  \end{displaymath}
  If $p_1 = \dotsb = p_{r-1} = 0$, then $r e_r = (-1)^{r-1} p_r$,
  which translates to $r c_r = (-1)^{r-1} r!\ch_r$.
\end{proof}
  In particular, the lemma applies to the perfect class
  $V=\dR f_*P$ occurring in Theorem~\ref{thm:gysin-formula}.

  \subsection{Kappa classes in terms of Chern classes of vector bundles}
\label{sec:kappa-CM}

\begin{proposition}\label{prop:AL-formula} 
  Let $r\ge0$, let $f\colon X\to Y$ be projective and flat of pure
  relative dimension $n$, and let $L$ and $A$ be two line bundles on
  $X$. Then one has the following identity in $A^r(Y)_\bQ$, where
  $[A]([L]-1)^{r+n}$ is regarded as a class in $K_0(\Vect X)$;
  as above, $\dR f_*$ denotes the perfect pushforward after applying
  $\alpha_X$:
  \begin{align}    \label{eq:big-formula}
    f_! c_1(L)^{r+n}
      &=\ch_r \Bigl(
          \dR f_*\bigl([A]([L]-1)^{r+n}\bigr)
        \Bigr)\\
    \label{eq:big-formula2}
      &=\sum_{j=0}^{r+n}(-1)^{r+n-j}\binom{r+n}{j}
        \ch_r \bigl(
          \dR f_* (A\otimes L^{\otimes j})
        \bigr).
  \end{align}
\end{proposition}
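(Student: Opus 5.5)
We apply Theorem~\ref{thm:gysin-formula}\textup{(1)} to the class
\begin{displaymath}
  P:=[A]\bigl([L]-1\bigr)^{r+n}\in K_0(\Vect X).
\end{displaymath}
The plan has three steps. First, compute the Chern character of $P$ and check the vanishing hypothesis. Second, verify the lifting hypothesis on $\dR f_*P$. Third, expand $P$ binomially to pass from \eqref{eq:big-formula} to \eqref{eq:big-formula2}.

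For the first step, write $\ell=c_1(L)$ and $a=c_1(A)$. Multiplicativity of $\ch$ on $K_0(\Vect X)$ gives
\begin{displaymath}
  \ch(P)=e^{a}\,\bigl(e^{\ell}-1\bigr)^{r+n}.
\end{displaymath}
Since $e^\ell-1=\ell+\ell^2/2+\dotsb$ has no constant term, the factor $(e^\ell-1)^{r+n}$ equals $\ell^{r+n}$ plus terms of degree $>r+n$. Multiplying by $e^a=1+\dotsb$ leaves the lowest term unchanged. Hence $\ch_j(P)=0$ for $j<r+n$ and $\ch(P)_{r+n}=\ell^{r+n}=c_1(L)^{r+n}$. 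All of these are identities among polynomials in Chern classes of line bundles, so they hold in $A^*(X)_\bQ$. Nilpotence of $\ell$ is not needed, because we only compare graded components.

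For the second step, $f$ is flat and projective. The standard fact recalled at the end of Section~\ref{sec:K-groups} then shows that, for every $P\in K_0(\Vect X)$, the class $\dR f_*P$ lies in the image of $\alpha_Y$. This is the lift hypothesis of Theorem~\ref{thm:gysin-formula}. Properness and pure relative dimension $n$ are given. Theorem~\ref{thm:gysin-formula}\textup{(1)} therefore yields
\begin{displaymath}
  \ch_r\bigl(\dR f_*P\bigr)=f_!\,\ch(P)_{r+n}=f_!\,c_1(L)^{r+n},
\end{displaymath}
which is \eqref{eq:big-formula}.

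For the third step, expand in $K_0(\Vect X)$:
\begin{displaymath}
  [A]([L]-1)^{r+n}=\sum_{j=0}^{r+n}(-1)^{r+n-j}\binom{r+n}{j}\,[A\otimes L^{\otimes j}].
\end{displaymath}
The composite $\alpha_X$ followed by $\dR f_*$ is additive, and so is $\ch_r$ on $K_0(\Perf Y)$, so this expansion gives \eqref{eq:big-formula2}. Each individual term $\dR f_*(A\otimes L^{\otimes j})$ is again perfect and lifts by the same standard fact, so each $\ch_r$ in \eqref{eq:big-formula2} is defined.

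The main point needing care is the first step. One has to be sure that $\ch(P)_{r+n}$ means the degree-$(r+n)$ component in operational cohomology and that the identity $\ch(P)=e^a(e^\ell-1)^{r+n}$ holds there. Both follow from the multiplicativity of the operational Chern character for vector bundles and from $\ch(L)=e^{c_1(L)}$. The remaining steps are bookkeeping.
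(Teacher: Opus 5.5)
Your proposal is correct and is essentially the paper's proof. Like the paper, you compute $\ch(P)=e^{a}(e^{\ell}-1)^{r+n}=\ell^{r+n}+(\text{higher-degree terms})$, note that the vector-bundle lift of $\dR f_*P$ is automatic because $f$ is projective, and apply Theorem~\ref{thm:gysin-formula}\textup{(1)}; your explicit binomial expansion in $K_0(\Vect X)$ for \eqref{eq:big-formula2} just spells out a step the paper leaves implicit.
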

\begin{proof}
  Let $\ell = c_1(L)$ and $a=c_1(A)$. Then 
  \begin{displaymath}
    \ch \bigl( [A] ([L] - 1)^{r+n} \bigr) =
    e^a (e^\ell - 1)^{r+n} =
    \ell^{r+n} + \text{(higher-degree terms)}.
  \end{displaymath}
  The vector bundle lift required in
  Theorem~\ref{thm:gysin-formula} is automatic by the standard fact
  recalled above because $f$ is projective. Applying that theorem
  gives the result.
\end{proof}

\begin{corollary}\label{cor:fc1-a=0}
  One has
  \begin{displaymath}
    f_! c_1(L)^{r+n}
      =\ch_r \Bigl(
          \dR f_*\bigl(([L]-1)^{r+n}\bigr)
        \Bigr)
      =\sum_{j=0}^{r+n}(-1)^{r+n-j}\binom{r+n}{j}
        \ch_r \bigl(
          \dR f_* L^{\otimes j}
        \bigr).
  \end{displaymath}  
\end{corollary}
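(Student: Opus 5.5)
The plan is to specialize Proposition~\ref{prop:AL-formula} to the trivial line bundle $A=\cO_X$, and then expand the class $([L]-1)^{r+n}$ binomially. Under the standing hypotheses of Proposition~\ref{prop:AL-formula} ($f$ projective and flat of pure relative dimension $n$, $L$ a line bundle on $X$), we set $A=\cO_X$. Then $[A]=1$ in the ring $K_0(\Vect X)$, and the element $[A]([L]-1)^{r+n}$ becomes $([L]-1)^{r+n}$. Identity~\eqref{eq:big-formula} then yields the first equality directly.

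For the second equality there are two equivalent routes.

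\emph{Via \eqref{eq:big-formula2}.} Substitute $A=\cO_X$, so that $A\otimes L^{\otimes j}=L^{\otimes j}$.

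\emph{Directly.} Expand in the commutative ring $K_0(\Vect X)$:
\begin{displaymath}
  ([L]-1)^{r+n}=\sum_{j=0}^{r+n}(-1)^{r+n-j}\binom{r+n}{j}[L]^j,
  \qquad [L]^j=[L^{\otimes j}].
\end{displaymath}
Then use that $\dR f_*$ (composed with $\alpha_X$) is additive on $K_0$, and that $\ch_r$ is additive on $K_0(\Perf Y)$.

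Each summand makes sense for the same reason as in Proposition~\ref{prop:AL-formula}. Since $f$ is flat and projective, the standard fact recalled in Section~\ref{sec:K-groups} implies that each $\dR f_*L^{\otimes j}$ is perfect and lifts to $K_0(\Vect Y)$. Hence its operational Chern character is defined.

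There is no real obstacle here; the statement is a corollary of Proposition~\ref{prop:AL-formula}. The only point needing care is that $\ch_r$ is additive over the alternating sum, i.e.\ that $\ch$ is a group homomorphism $K_0(\Perf Y)\to A^*(Y)_\bQ$. This follows from additivity of Chern characters on distinguished triangles of perfect complexes, as cited from the Stacks Project.
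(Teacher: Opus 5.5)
Your proposal is correct and is the paper's argument: the paper's proof is the one line ``use $A=\cO_X$ in \eqref{eq:big-formula}.'' Your binomial expansion for the second equality is the same step already built into \eqref{eq:big-formula2} of Proposition~\ref{prop:AL-formula}.
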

\begin{proof}
  Use $A = \cO_X$ in \eqref{eq:big-formula}.
\end{proof}

\begin{remark}
  When $r=1$, formulas equivalent to 
  \begin{displaymath}
    f_!\Bigl( \prod_{a=1}^{n+1} c_1(L_{a}) \Bigr)
    = c_1 \Bigl( \det
        \dR f_* \Bigl( \prod_{a=1}^{n+1} ([L_a] -1) \Bigr)
      \Bigr)
  \end{displaymath}
  have appeared in the literature under the name \emph{Deligne pairing};
  for example, see \cite{biswas2011deligne-pairing},
  \cite[Section~6]{eriksson2024deligne-riemann-roch}.
\end{remark}

\begin{definition}
  Denote by $V_k$ the sheaf $f_*L^{\otimes k}$ on~$Y$, and set
  \begin{displaymath}
    \bV_k:=\dR f_*L^{\otimes k}\in K_0(\Perf Y).
  \end{displaymath}
  If $L$ is $f$-ample, then, for $k\gg0$, Serre vanishing implies that
  $R^p f_*L^{\otimes k}=0$ for $p>0$ and that $V_k$ is locally free.
  For such $k$, the classes $\bV_k=[V_k]$ are represented by vector
  bundles.
\end{definition}

\begin{corollary}\label{cor:fc1-a=k}
  For any $k$, one has
  \begin{displaymath}
    f_! c_1(L)^{r+n} 
    =\sum_{j=0}^{r+n}(-1)^{r+n-j}\binom{r+n}{j}
    \ch_r (\bV_{k+j}),
  \end{displaymath}
  and for $k\gg0$, the classes $\bV_{k+j}$ are represented by the
  vector bundles $V_{k+j}$ on $Y$.
\end{corollary}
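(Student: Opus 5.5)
The plan is to apply Proposition~\ref{prop:AL-formula} with the twisting line bundle $A:=L^{\otimes k}$, and then expand the $K$-theory class by the binomial theorem. No new Riemann--Roch input should be needed. Proposition~\ref{prop:AL-formula} already gives, for any line bundle $A$ on $X$,
\begin{displaymath}
  f_!c_1(L)^{r+n}=\sum_{j=0}^{r+n}(-1)^{r+n-j}\binom{r+n}{j}
  \ch_r\bigl(\dR f_*(A\otimes L^{\otimes j})\bigr).
\end{displaymath}
Substituting $A=L^{\otimes k}$ gives $A\otimes L^{\otimes j}=L^{\otimes(k+j)}$, so each summand becomes $\ch_r(\bV_{k+j})$, which is exactly the asserted formula. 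This holds for every integer $k$, including negative $k$, since $L^{\otimes k}$ is still a line bundle and the proposition places no positivity assumption on $A$.

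The one point to check is that the hypotheses of Proposition~\ref{prop:AL-formula} still hold. The lift of $\dR f_*$ to $K_0(\Vect Y)$ is automatic because $f$ is projective. The leading-term condition also survives the twist: $\ch\bigl([L^{k}]([L]-1)^{r+n}\bigr)=e^{k\ell}(e^\ell-1)^{r+n}$, which begins with $\ell^{r+n}$ and has no lower-degree terms.

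For the second assertion, I would invoke the preceding Definition. Once $L$ is $f$-ample, Serre vanishing provides a $k_0$ such that $R^pf_*L^{\otimes m}=0$ for $p>0$ and all $m\ge k_0$. Cohomology and base change for the flat projective morphism $f$ then shows that $f_*L^{\otimes m}$ is locally free and that $\dR f_*L^{\otimes m}=[V_m]$. Taking $k\ge k_0$ makes this apply to all of $k,k+1,\dots,k+r+n$ at once.

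I do not expect a genuine obstacle here. The only care needed is in the uniformity of the Serre vanishing bound over the finite range $j=0,\dots,r+n$, and this is handled simply by choosing $k\ge k_0$.
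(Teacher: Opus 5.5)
Your proposal is correct and matches the paper's proof: substitute $A=L^{\otimes k}$ into \eqref{eq:big-formula2} of Proposition~\ref{prop:AL-formula}. The vector-bundle assertion for $k\gg0$ then comes from Serre vanishing and cohomology and base change, as in the preceding Definition, under the implicit assumption there that $L$ is $f$-ample.
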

\begin{proof}
  Use $A = L^{\otimes k}$ in \eqref{eq:big-formula2}. 
\end{proof}

Another application is as follows.
Recall that the upper discrete derivative $\Delta F$ of a function $F$
defined on $\bZ$ is given by
$\Delta F(k)=F(k+1)-F(k)$.

\begin{lemma}\label{lem:leading-term}
  The $(r+n)$-th discrete derivative $\Delta^{r+n} \ch_r$ of the function
  \begin{displaymath}
    \ch_r\colon \bZ\to A^r(Y)_\bQ, 
    \qquad
    k\mapsto \ch_r(\bV_k),    
  \end{displaymath}
  is constant and equal to $f_!c_1(L)^{r+n}$.
\end{lemma}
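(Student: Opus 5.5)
The plan is to identify the iterated discrete derivative with the alternating binomial sum in Corollary~\ref{cor:fc1-a=k}. For any function $F\colon\bZ\to A^r(Y)_\bQ$ and any $m\ge0$, induction on $m$ (using $\Delta=E-1$, where $E$ is the shift $EF(k)=F(k+1)$, and the binomial theorem for the commuting operators $E$ and $1$) gives
\begin{displaymath}
  \Delta^{m}F(k)=\sum_{j=0}^{m}(-1)^{m-j}\binom{m}{j}F(k+j).
\end{displaymath}
I take $m=r+n$ and $F(k)=\ch_r(\bV_k)$. This function is defined for every integer $k$, because $\bV_k=\dR f_*L^{\otimes k}$ is a perfect class for every $k$, by the perfectness facts recalled in Section~\ref{sec:K-groups}.

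Next I apply Corollary~\ref{cor:fc1-a=k}, which holds for \emph{any} $k\in\bZ$ and not only for $k\gg0$. Its proof substitutes $A=L^{\otimes k}$ into \eqref{eq:big-formula2}, which is valid for an arbitrary line bundle $A$. Comparing with the display above, for every $k$,
\begin{displaymath}
  \Delta^{r+n}\ch_r(k)=\sum_{j=0}^{r+n}(-1)^{r+n-j}\binom{r+n}{j}\ch_r(\bV_{k+j})=f_!c_1(L)^{r+n}.
\end{displaymath}
The right side does not depend on $k$. Hence $\Delta^{r+n}\ch_r$ is constant with the asserted value, and consequently $\Delta^{r+n+1}\ch_r=0$, so $k\mapsto\ch_r(\bV_k)$ is a polynomial of degree at most $r+n$ with leading term $f_!c_1(L)^{r+n}\,k^{r+n}/(r+n)!$.

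The only point needing care is the claim that Corollary~\ref{cor:fc1-a=k} holds for negative or small $k$. There the classes $\bV_k$ are only perfect complexes, not vector bundles. I would check that Proposition~\ref{prop:AL-formula} needs just two things: $A$ is a line bundle, and the vector-bundle lift of $\dR f_*$ of a class in $K_0(\Vect X)$ exists, which is automatic because $f$ is projective. The rest of the argument is purely formal.
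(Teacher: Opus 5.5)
Your proposal is correct and follows the paper's own argument: the paper proves the lemma by substituting $A=L^{\otimes k}$ into \eqref{eq:big-formula2}, which is exactly Corollary~\ref{cor:fc1-a=k}, and leaves implicit the expansion $\Delta^{r+n}F(k)=\sum_{j=0}^{r+n}(-1)^{r+n-j}\binom{r+n}{j}F(k+j)$ that you write out. Your remark that the identity holds for every $k\in\bZ$ is also consistent with the paper, since the only input needed is that $f$ is projective, which guarantees the vector-bundle lift.
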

\begin{proof}
  This is just formula~\eqref{eq:big-formula2} in
  Proposition~\ref{prop:AL-formula} for $A=L^{\otimes k}$.
\end{proof}

\begin{corollary}\label{cor:poly}
  The function $k\mapsto\ch_r(\bV_k)$ is a polynomial sequence of degree
  at most $r+n$, and
  \begin{displaymath}
    \ch_r(\bV_k)
      =\frac{1}{(r+n)!}\,f_!c_1(L)^{r+n}\,k^{r+n}
        +P_{r+n-1}(k),
      \quad\text{where } P_{r+n-1}(k) \in A^r(Y)_\bQ[k].
  \end{displaymath}
  Here and below, when $r+n=0$ we use the convention $P_{-1}=0$.
  The degree of this polynomial sequence is exactly $r+n$ if and only if
  $f_!c_1(L)^{r+n}\ne0$.
\end{corollary}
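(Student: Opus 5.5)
The plan is to deduce the statement directly from Lemma~\ref{lem:leading-term} by the standard finite-difference characterization of polynomials. Set $F(k):=\ch_r(\bV_k)\in A^r(Y)_\bQ$ and $c:=f_!c_1(L)^{r+n}$. Lemma~\ref{lem:leading-term} says that $\Delta^{r+n}F$ is the constant function $c$, so $\Delta^{r+n+1}F=0$.

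First I would show that a function $F\colon\bZ\to W$, with $W$ a $\bQ$-vector space, satisfying $\Delta^{m+1}F=0$ is a polynomial of degree at most $m$ with coefficients in $W$. Using Newton's forward-difference formula, define
\begin{displaymath}
  G(k)=\sum_{i=0}^{m}(\Delta^iF)(0)\binom{k}{i}.
\end{displaymath}
Then $G$ is a polynomial in $k$ with coefficients in $W$; these are rational multiples of elements of $W$, which is fine since we work over $\bQ$. By construction $\Delta^iG(0)=\Delta^iF(0)$ for $i\le m$ and $\Delta^{m+1}G=0$. Then $H=F-G$ satisfies $\Delta^{m+1}H=0$ and $\Delta^iH(0)=0$ for $i\le m$. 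Descending induction on $i$ shows each $\Delta^iH$ is constant, with value $\Delta^iH(0)=0$; for $i=0$ this covers all of $\bZ$, including negative $k$, because $\Delta H=0$ forces $H(k+1)=H(k)$ in both directions. Applying this with $m=r+n$ gives the polynomiality claim.

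Second, I would identify the leading coefficient. The top term of $G$ is $(\Delta^{r+n}F)(0)\binom{k}{r+n}=c\,\frac{k^{r+n}}{(r+n)!}+(\text{lower order in }k)$, and every other summand has degree less than $r+n$. This gives the displayed formula with $P_{r+n-1}\in A^r(Y)_\bQ[k]$ of degree at most $r+n-1$. When $r+n=0$ the sum consists only of the constant term $c$, which matches the convention $P_{-1}=0$.

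Finally, the degree is exactly $r+n$ iff the coefficient of $k^{r+n}$, namely $c/(r+n)!$, is nonzero. This holds iff $c\ne0$, since $A^r(Y)_\bQ$ is a $\bQ$-vector space and polynomial functions $\bZ\to W$ determine their coefficients (infinitely many points, via the Vandermonde determinant over $\bQ$). There is no real obstacle here; the only point needing care is that the coefficients live in a $\bQ$-vector space rather than a field, and the binomial-basis argument handles that.
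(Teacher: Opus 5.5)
Your proposal is correct and follows the paper's proof: it derives $\Delta^{r+n+1}F=0$ from Lemma~\ref{lem:leading-term}, expands $F$ in the binomial basis $\binom{k}{s}$, and reads off the leading coefficient $c/(r+n)!$ from the top binomial term. The only difference is that you spell out the forward-difference interpolation and the uniqueness of coefficients in a $\bQ$-vector space, which the paper leaves implicit.
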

\begin{proof}
  Since $\Delta^{r+n} \ch_r$ is constant, $\Delta^{r+n+1}
  \ch_r=0$. Thus, $\ch_r(\bV_k)$ is a polynomial of degree $\le r+n$.
  Write it in the binomial basis as
  $\sum_{s=0}^{r+n} c_s \binom{k}{s}$. Then
  $\Delta^{r+n} \ch_r = c_{r+n}$, and $\binom{k}{r+n}$ has leading
  coefficient $1/(r+n)!$.
\end{proof}

Let us return to the universal family
$\ff\colon(\cX,\cD)\to\cM$. Since $\ff$ is projective, $\cM$ is
quasi-compact, and $\cL$ is relatively ample, relative Serre vanishing
can be checked on a finite-type scheme atlas and gives an integer $m_0$
such that
\begin{displaymath}
  R^q\ff_*\cL^{\otimes k}=0
  \qquad\text{for }q>0,\ k\ge m_0.
\end{displaymath}
For $k\ge m_0$, the theorem on cohomology and base change shows that
$\cV_k:=\ff_*\cL^{\otimes k}$
is a vector bundle on $\cM$, and its formation is compatible with
arbitrary base change.

\begin{corollary}[Universal vector-bundle and Chern-class formulas]
  \label{cor:universal-vector-bundle}
  For every $r\geq0$ and $m\geq m_0$, define the virtual vector
  bundle
  \begin{displaymath}
    \cE_{r,m}
      :=\sum_{j=0}^{n+r}(-1)^{n+r-j}\binom{n+r}{j}
        [\cV_{m+j}]
      \in K_0(\Vect\cM).
  \end{displaymath}
  Then
  \begin{displaymath}
    \ch_q(\cE_{r,m})=0\quad\text{for }q<r,
    \qquad
    \ch_r(\cE_{r,m})=N^{n+r}\kappa_r.
  \end{displaymath}
  Equivalently,
  \begin{displaymath}
    \kappa_r
      =\frac{1}{N^{n+r}}
       \sum_{j=0}^{n+r}(-1)^{n+r-j}\binom{n+r}{j}
          \ch_r(\cV_{m+j})
    \qquad\text{in }A^r(\cM)_\bQ.
  \end{displaymath}
  If $r\geq1$, then
  \begin{displaymath}
    c_1(\cE_{r,m})=\dotsb=c_{r-1}(\cE_{r,m})=0
  \end{displaymath}
  and
  \begin{equation}\label{eq:kappa-single-chern}
    \kappa_r
      =\frac{(-1)^{r-1}}{N^{n+r}(r-1)!}\,
        c_r(\cE_{r,m}).
  \end{equation}
  In particular, although $\cE_{r,m}$ depends on $m$, its first
  potentially nonzero Chern class does not: for all $m,m'\geq m_0$,
  \begin{math}
    c_r(\cE_{r,m})=c_r(\cE_{r,m'}).
  \end{math}
  Moreover,
  \begin{displaymath}
    \ch_r(\dR\ff_*\cL^{\otimes k})
      =\frac{N^{n+r}}{(n+r)!}\,\kappa_r\,k^{n+r}
        +P_{r+n-1}(k),
    \qquad
    P_{r+n-1}(k)\in A^r(\cM)_\bQ[k].
  \end{displaymath}
\end{corollary}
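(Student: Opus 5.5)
The plan is to apply the general results of Section~\ref{sec:gysin-formula} to the universal family $\ff\colon\cX\to\cM$ with $L=\cL$ and $A=\cL^{\otimes m}$, and then divide by $N^{n+r}$. The only new point is that the base is a DM stack rather than a scheme. I would handle this exactly as in the remark after Theorem~\ref{thm:crepant-functoriality}: an identity of operational classes on $\cM$ is checked after pullback along every morphism $T\to\cM$ from a scheme. Since $\cM=[H/G]$, one may take $T$ quasi-projective, or test on integral closed subschemes of such $T$. Over $T$, the pulled-back family $f_T\colon X_T\to T$ is flat and projective, and $\cL_T$ is relatively ample.

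All the objects involved are compatible with base change. This includes $\ell$, the Gysin pushforward (Proposition~\ref{prop:functoriality}), and $\cV_k$ for $k\ge m_0$, by cohomology and base change. So over $T$ the class $\cE_{r,m}$ pulls back to
\begin{displaymath}
  \sum_j(-1)^{n+r-j}\binom{n+r}{j}\,\dR (f_T)_*\bigl(\cL_T^{\otimes(m+j)}\bigr)
  =\dR (f_T)_*\bigl([\cL_T^{\otimes m}]([\cL_T]-1)^{n+r}\bigr).
\end{displaymath}
The first equality of the statement is then Theorem~\ref{thm:gysin-formula}(1), applied with $P=[\cL_T^{\otimes m}]([\cL_T]-1)^{n+r}$. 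Its Chern character is $e^{m\ell}(e^{\ell}-1)^{n+r}=\ell^{n+r}+\cdots$, so it vanishes in degrees below $n+r$. This gives $\ch_q(\cE_{r,m})=0$ for $q<r$ and $\ch_r(\cE_{r,m})=\ff_!\ell^{n+r}=N^{n+r}\kappa_r$; equivalently, it is Proposition~\ref{prop:AL-formula} and Corollary~\ref{cor:fc1-a=k} with $k=m$. The vector-bundle lift needed there is automatic here, because $\cE_{r,m}$ is already an explicit class in $K_0(\Vect)$.

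The Chern class statements then follow from Lemma~\ref{lem:newton}. From $\ch_1=\dots=\ch_{r-1}=0$ the Newton identities give, inductively, $c_1=\dots=c_{r-1}=0$: each $e_k$ with $k<r$ is expressed through $p_1,\dots,p_k$ and lower $e$'s, all of which vanish. The lemma then gives $c_r=(-1)^{r-1}(r-1)!\,\ch_r$, and combining this with the previous paragraph yields \eqref{eq:kappa-single-chern}.

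The remaining claims are immediate. Independence of $m$ holds because the right side of \eqref{eq:kappa-single-chern} does not involve $m$. The asymptotic formula is Corollary~\ref{cor:poly} applied to $\ff$ and $\cL$ (again tested over schemes), with leading coefficient $\ff_!\ell^{n+r}/(n+r)!=N^{n+r}\kappa_r/(n+r)!$.

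The main obstacle is the passage from schemes to the stack. The Riemann--Roch input (Baum--Fulton--MacPherson, as used in Theorem~\ref{thm:gysin-formula}) is available only on schemes. I would therefore first verify that Chern characters of $K_0(\Vect\cM)$-classes, and their pullbacks, are defined and compatible in Vistoli's, or equivalently Edidin--Graham's, operational theory. Once that holds, the identities follow by testing on all $T\to\cM$ with $T$ a scheme, which by definition determines an operational class on $\cM$.
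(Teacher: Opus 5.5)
Your proposal is correct and is essentially the paper's proof. In both, Corollary~\ref{cor:fc1-a=k} (Proposition~\ref{prop:AL-formula} with $A=\cL^{\otimes m}$), applied after every base change from a scheme, gives $\ch_r(\cE_{r,m})=N^{n+r}\kappa_r$; Newton's identities and Lemma~\ref{lem:newton} give the Chern class statements; and Corollary~\ref{cor:poly} gives the asymptotic formula. The only small difference is how you get $\ch_q(\cE_{r,m})=0$ for $q<r$: you read it off from the vanishing clause of Theorem~\ref{thm:gysin-formula}(1), whereas the paper takes an $(n+r)$-th finite difference using the degree bound $n+q<n+r$ from Corollary~\ref{cor:poly}. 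Both routes rest on the same Riemann--Roch input and are equally valid.
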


\begin{proof}
  For $q=r$, Corollary~\ref{cor:fc1-a=k}, applied after every base
  change $S\to\cM$ with $L=\cL|_{X_S}$ and $k=m$, gives
  \begin{displaymath}
    \ch_r(\cE_{r,m})=\ff_!\ell^{n+r}=N^{n+r}\kappa_r.
  \end{displaymath}
  For $q<r$, Corollary~\ref{cor:poly} shows that
  $k\mapsto\ch_q(\dR\ff_*\cL^{\otimes k})$ is a polynomial sequence
  of degree at most $n+q<n+r$. Its $(n+r)$-th finite difference is
  therefore zero, which is exactly the identity
  $\ch_q(\cE_{r,m})=0$.

  Newton's identities now give
  $c_1(\cE_{r,m})=\dotsb=c_{r-1}(\cE_{r,m})=0$, and
  Lemma~\ref{lem:newton} gives equation~\eqref{eq:kappa-single-chern}.
  The asymptotic formula follows from Corollary~\ref{cor:poly} and
  $\ff_!\ell^{n+r}=N^{n+r}\kappa_r$.
\end{proof}

\begin{definition}
  Define \emph{the kappa ring} $\kappa(\cM)$ of the moduli stack $\cM$
  to be the subring of $A^*(\cM)_\bQ$ generated by the kappa classes
  $\kappa_r(\cM)$.
\end{definition}

\begin{corollary}[Structure of the kappa ring]
  Put $d=\dim\bm M$, and let $I\subset\kappa(\cM)$ be the ideal
  generated by the positive-degree kappa classes. Then
  $\kappa(\cM)$ is a finite-dimensional graded $\bQ$-algebra and
  \begin{math}
    I^{d+1}=0.
  \end{math}
  If $d>0$, then $I^d\ne0$; in fact, $\kappa_1^d\ne0$.
\end{corollary}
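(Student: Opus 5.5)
The plan is to deduce all three assertions from results already proved for an arbitrary family, applied to the universal family $\ff\colon(\cX,\cD)\to\cM$, or equivalently, via Proposition~\ref{prop:coarse-descent}, to a suitable family over a scheme mapping onto $\bm M$.

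\emph{Finite-dimensionality.} First, by Theorem~\ref{thm:vanishing}, every kappa monomial of total codimension greater than $d$ vanishes. So $\kappa(\cM)$ is spanned by the monomials $\kappa_{r_1}\cdots\kappa_{r_m}$ with all $r_i\ge1$ and $\sum r_i\le d$, together with $\kappa_0=v$. There are finitely many such monomials. Hence $\kappa(\cM)$ is a finite-dimensional graded $\bQ$-algebra, concentrated in degrees $0,\dots,d$.

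\emph{Nilpotence.} Next, $I^{d+1}$ is spanned by products of at least $d+1$ positive-degree kappa classes. Each such product has codimension at least $d+1>d$, so it vanishes by the same theorem. This gives $I^{d+1}=0$.

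\emph{Nonvanishing of $\kappa_1^d$.} The remaining step, and the main point, is $\kappa_1^d\ne0$ when $d>0$.
\begin{itemize}
\item Since $\pi^*\colon A^*(\bm M)_\bQ\to A^*(\cM)_\bQ$ is an isomorphism of rings, it suffices to show $\bar\kappa_1^d\ne0$ on $\bm M$.
\item By the discussion in Section~\ref{sec:kappa01}, $\bar\kappa_1$ is the first Chern class of the descended logarithmic CM $\bQ$-line bundle, which is ample on the projective scheme $\bm M$ by Patakfalvi--Xu.
\item Choose an irreducible component $Z\subset\bm M$ of dimension $d$. Then
\[
\deg(\bar\kappa_1^d\cap[Z])=(\lambda^{\log}_{\CM}|_Z)^d>0,
\]
because the top self-intersection of an ample $\bQ$-line bundle on a projective variety is positive.
\item A class that caps nontrivially with some cycle is nonzero. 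Hence $\bar\kappa_1^d\ne0$, so $\kappa_1^d\neq0$, and therefore $I^d\ne0$.
\end{itemize}

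Alternatively, one could pick a family over a projective integral scheme $S$ with generically finite moduli map onto $Z$, obtained by a finite cover trivializing the stack structure generically. Then the corollary asserting $\kappa_{1,S}^{v_f}\ne0$ with $v_f=d$, pushed forward via the projection formula, gives the same conclusion.

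The only delicate point is the compatibility of $c_1$ of the descended CM bundle with $\bar\kappa_1$ under the coarse comparison isomorphism. This follows from the Picard-versus-operational compatibility established in Section~\ref{sec:stacky-coarse}, together with $\kappa_1=c_1(\lambda^{\log}_{\CM})$ on $\cM$.
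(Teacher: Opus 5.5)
Your proposal is correct and takes essentially the same route as the paper. Theorem~\ref{thm:vanishing} gives finite-dimensionality and $I^{d+1}=0$, and since $\kappa_1$ is the pullback of the ample logarithmic CM class on $\bm M$, its $d$-th power has positive degree on a $d$-dimensional component, so $\kappa_1^d\ne0$. Your alternative argument via a family over a cover is not needed.
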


\begin{proof}
  Theorem~\ref{thm:vanishing} shows that every kappa monomial of total
  codimension greater than $d$ vanishes. Hence the kappa ring is
  generated by finitely many classes and is concentrated in degrees
  at most $d$, which proves finite-dimensionality and $I^{d+1}=0$.
  The class $\kappa_1$ is the pullback of the ample logarithmic CM
  class on $\bm M$. Its $d$-th power has positive degree on every
  $d$-dimensional irreducible component of $\bm M$, and therefore is
  nonzero.
\end{proof}

By equation~\eqref{eq:kappa-single-chern}, every $\kappa_r$ with
$r\geq1$ is a rational multiple of a single Chern class of a virtual
vector bundle; $\kappa_0=v$ is scalar. Chern classes of vector bundles,
and hence of virtual vector bundles, commute with arbitrary bivariant
classes; see \cite[Proposition~17.3.2]{fulton1998intersection-theory}.
Thus every $\kappa_r$ is central in $A^*(\cM)_\bQ$, and
$\kappa(\cM)$ is a commutative ring. This conclusion follows from first
principles, without invoking resolution of singularities.
The same conclusion holds on every base: for a family
$f\colon(X,D)\to S$, the base-changed virtual bundles in
Corollary~\ref{cor:universal-vector-bundle} express each
$\kappa_{r,S}$ as a Chern class of a virtual vector bundle on $S$.
Consequently, every $\kappa_{r,S}$ lies in the center of
$A^*(S)_\bQ$.

\bibliographystyle{amsalpha}

\providecommand{\noopsort}[2]{#1}\def\cprime{$'$}
\providecommand{\bysame}{\leavevmode\hbox to3em{\hrulefill}\thinspace}
\providecommand{\MR}{\relax\ifhmode\unskip\space\fi MR }
\providecommand{\MRhref}[2]{%
  \href{http://www.ams.org/mathscinet-getitem?mr=#1}{#2}
}
\providecommand{\href}[2]{#2}

\end{document}